\documentclass[12pt]{extarticle}
\usepackage{amsmath, amsthm, amssymb, mathtools, hyperref,color}
\usepackage{graphicx}
\usepackage{comment}
\usepackage{bbm}

\tolerance 10000
\headheight 0in
\headsep 0in
\evensidemargin 0in
\oddsidemargin \evensidemargin
\textwidth 6.5in
\topmargin .25in
\textheight 8.7in

\newtheorem{theorem}{Theorem}
\numberwithin{theorem}{section}
\newtheorem{proposition}[theorem]{Proposition}
\newtheorem{lemma}[theorem]{Lemma}
\newtheorem{corollary}[theorem]{Corollary}
\newtheorem{definition}[theorem]{Definition}
\newtheorem{remark}[theorem]{Remark}
\newtheorem{conjecture}[theorem]{Conjecture}
\newtheorem{problem}[theorem]{Problem}
\newtheorem{example}[theorem]{Example}

\newcommand{\R}{\mathbb{R}}
\newcommand{\N}{\mathbb{N}}
\newcommand{\C}{\mathbb{C}}

\newcommand{\PP}{\mathbb{P}}

\newcommand{\cX}{\mathcal{X}}

\newcommand{\dd}{{\rm d}}

\date{}
\def\S{\mathbb{S}}
\title{\textbf{Exponential Varieties}
\author{ Mateusz Micha{\l}ek,
Bernd Sturmfels, \\
Caroline Uhler, and Piotr Zwiernik }}

\begin{document}

\maketitle

\begin{abstract} \noindent
Exponential varieties arise from exponential families in statistics.
These real algebraic varieties have strong positivity
and convexity properties, familiar from
toric varieties and their moment maps.
Among them are 
varieties of inverses of symmetric matrices satisfying linear constraints.
This class includes Gaussian graphical models.
 We develop a general theory of exponential varieties. These are derived
from hyperbolic polynomials and their
integral representations.
We compare the multidegrees and ML degrees
of the gradient map for hyperbolic polynomials.
\end{abstract}

\section{Introduction}
\label{intro}

An exponential variety is a real algebraic variety in
an affine or projective space that has a distinguished
set of positive points. This positive part maps
bijectively to a convex body. The inverse
to this map is an algebraic function from the convex body to
the variety.

This situation is familiar to geometers
from the theory of toric varieties. The convex body is
a polytope or polyhedral cone which is in bijection
with the positive part of the toric variety via the
moment map. The inverse of the moment map
is an algebraic function whose degree
is the Euler characteristic of an associated
hypersurface in the torus \cite[Thm.~3.2]{HS}.

A similar situation happens for the image $\mathcal{L}^{-1}$ of a linear
subspace $\mathcal{L} \subset \R^n$ under the
coordinatewise inversion map
$(x_1,\ldots,x_n) \mapsto (x_1^{-1}, \ldots , x_n^{-1})$.
Suppose  the convex polyhedron $\mathcal{L}_{\succ 0} = \R_{> 0}^n \cap \mathcal{L}$
is non-empty. The {\em reciprocal linear space} $\mathcal{L}^{-1}$
is a subvariety of $\R^n$ whose positive part
$\mathcal{L}^{-1}_{\succ  0}$ admits a canonical bijection
to the polyhedron dual to  $\mathcal{L}_{\succ 0}$. By composing with
the inversion map $\mathcal{L} \dashrightarrow \mathcal{L}^{-1}$,
and by passing from $\R^n$ to the projective space $\R\PP^{n-1}$, one obtains
a canonical bijection between the interior of any polytope and its dual;
see Example \ref{ex_prodlinform}.
In \cite[Example 3.5]{stuhl}, this process was described as
``how to morph a cube into an octahedron''.

In this paper we present a general theory that explains
these examples and many more.
Our point of departure is the theory of {\em exponential families}
in statistics \cite{brown}. Exponential varieties arise from exponential
families. Known results about the latter explain the remarkable
positivity properties of the former. In Section~\ref{sec:rat_exp_fam}
we review  basics on exponential families, such as the bijection between
the canonical parameters and the sufficient statistics (Theorem~\ref{thm:brown}).
Two key examples  are the multivariate
Gaussian family and the full discrete exponential family.
New points in Section~\ref{sec:rat_exp_fam} are the connection to
barrier functions in convex optimization (cf.~\cite{guler1}) and our definition of
{\em rational exponential families}. These statistical models have the
property that
the gradient of their {\em log-partition} function is given by rational functions,
a property much stronger than that required by
Drton and Sullivant \cite{DS} for   {\em algebraic exponential families}.

In Section \ref{sec:dis_gau_hyp} we define {\em hyperbolic exponential families}. These are associated with hyperbolic polynomials and their hyperbolicity cones~\cite{guler2,KPV}. Work of Scott and Sokal~\cite{sokal} implies that every exponential family whose canonical parameters form a convex cone and whose partition function is the power of a homogeneous polynomial must be hyperbolic (Theorem~\ref{th:gardingconverse}). We conjecture that the converse holds as well, namely that one can build a (statistical) exponential family, i.e.~with an underlying measure, from any hyperbolic polynomial (Conjecture~\ref{conj_Riesz}). 

The formal definition of {\em exponential varieties} appears in Section \ref{sec:lin_sub}.
They arise by fixing a linear subspace $\mathcal{L}$ in the space of canonical parameters.
The variety is the Zariski closure of the image of $\mathcal{L}$ under the
gradient map of the log-partition function.  In the discrete case, $\mathcal{L}$
has to be defined over $\mathbb{Q}$ in order to get an algebraic object.
We focus on the hyperbolic case, where every linear subspace that meets the
parameter cone defines an exponential variety. This includes the
Gaussian case, and we recover the {\em linear concentration models}~of~\cite{andersonLinearCovariance,stuhl}.

In Section \ref{sec:ml_degree} we examine the gradient
multidegree of a hyperbolic  exponential family. This is the
cohomology class of the graph of the gradient map.
Its coefficients characterize the degrees
of the exponential varieties defined by generic linear subspaces $\mathcal{L}$.
 Theorem \ref{eq:twoinequalities1} furnishes inequalities relating
  these degrees and the ML degree for special subspaces $\mathcal{L}$. Elementary symmetric form an important family of hyperbolic polynomials and Section~\ref{sec:ele_sym_pol} offers a detailed study of
the exponential varieties that they define. 

Section~\ref{sec:hankel} is devoted to Hankel models.
We show that the Grassmannian of lines, in its Pl\"ucker embedding, has the
structure of an exponential variety. This arises from the Gaussian case
by taking the linear space $\mathcal{L}$ of all Hankel matrices. The
extension  to (generalized) Hankel matrices of multivariate polynomials is related
to optimization via sums of squares~\cite{BPT}.

We close the introduction with a running example that will illustrate the general theory.

\begin{example} \rm
\label{ex:E_3}
Consider the  elementary symmetric polynomials in four parameters:
$$
\begin{matrix}
E_3(\theta) &  = & \theta_1 \theta_2 \theta_3
+ \theta_1 \theta_2 \theta_4
+ \theta_1 \theta_3 \theta_4
+ \theta_2 \theta_3 \theta_4, \\
E_2(\theta) &  = & \theta_1 \theta_2
+ \theta_1 \theta_3
+ \theta_1 \theta_4
+ \theta_2 \theta_3
+ \theta_2 \theta_4
+ \theta_3 \theta_4 , \\
E_1(\theta) &  = & \theta_1 + \theta_2 + \theta_3  + \theta_4.
\end{matrix}
$$
The  polynomial $E_3$ is hyperbolic with respect to
$\,C = \{ \theta \in \R^4 : E_i(\theta) > 0 \,\,\hbox{for}\,\, i=1,2,3\}$.
This set $C$ is the convex cone over the $3$-dimensional body known as the {\em elliptope},
and shown on the left in Figure~\ref{fig:somosa}. The closed dual cone
$K = C^\vee$ has
as its base the  convex hull of the {\em Steiner surface},
shown in the middle of Figure~\ref{fig:somosa},
which is the zero set of the quartic
\begin{equation}
\label{eq:steiner}
 Q \,\,=\,\,\sum \sigma_i^4 \,-\,4 \sum \sigma_i^3 \sigma_j
\,+\, 6 \sum \sigma_i^2 \sigma_j^2 \,+\,4 \sum \sigma_i^2 \sigma_j \sigma_k
\,-\, 40 \,\sigma_1 \sigma_2 \sigma_3 \sigma_4,
\end{equation}
where $\sigma_{1},\sigma_{2},\sigma_{3},\sigma_{4}$ are coordinates for the dual $\R^{4}$.

\begin{figure}[t]
\centering
 \includegraphics[width=4.9cm]{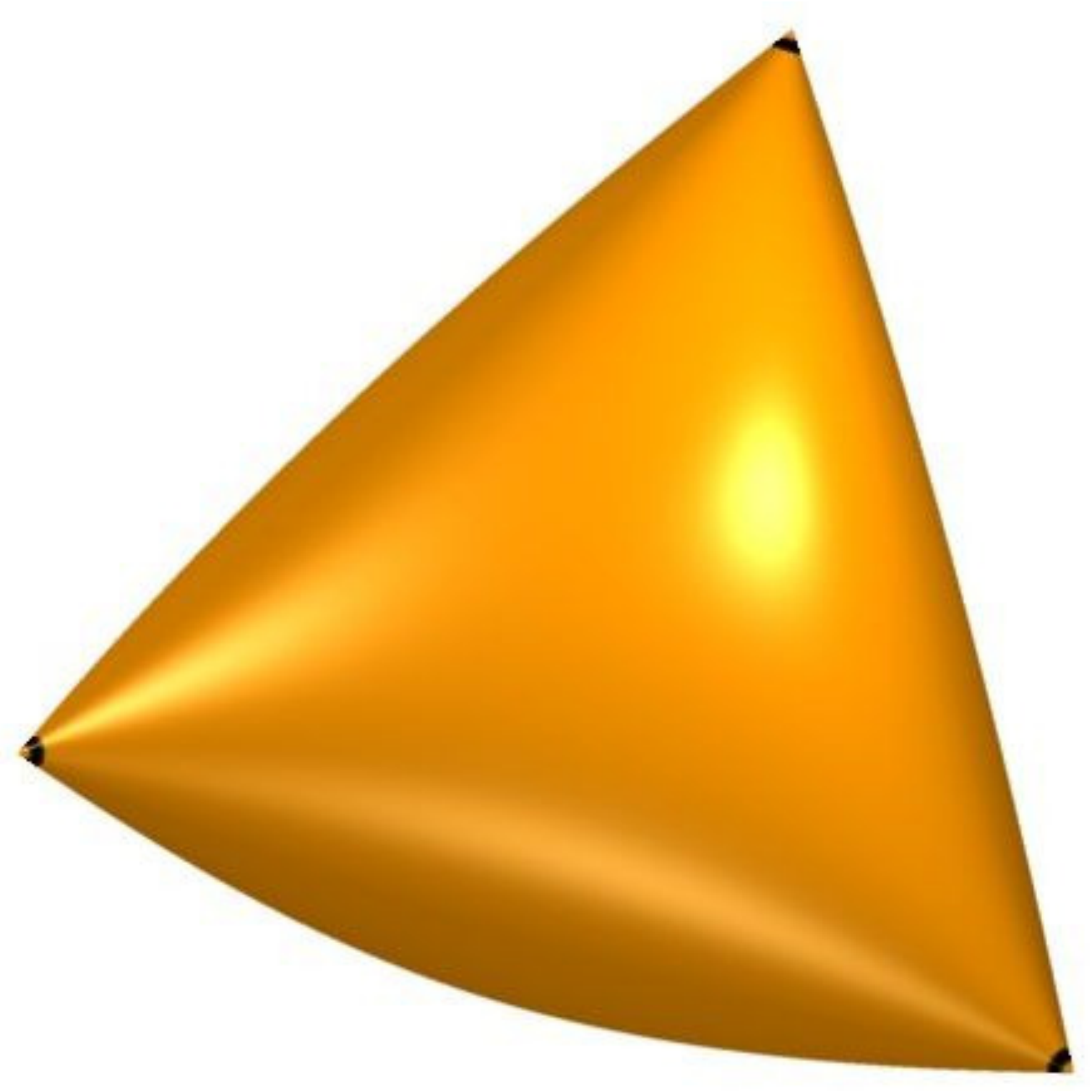}\includegraphics[width=4.9cm]{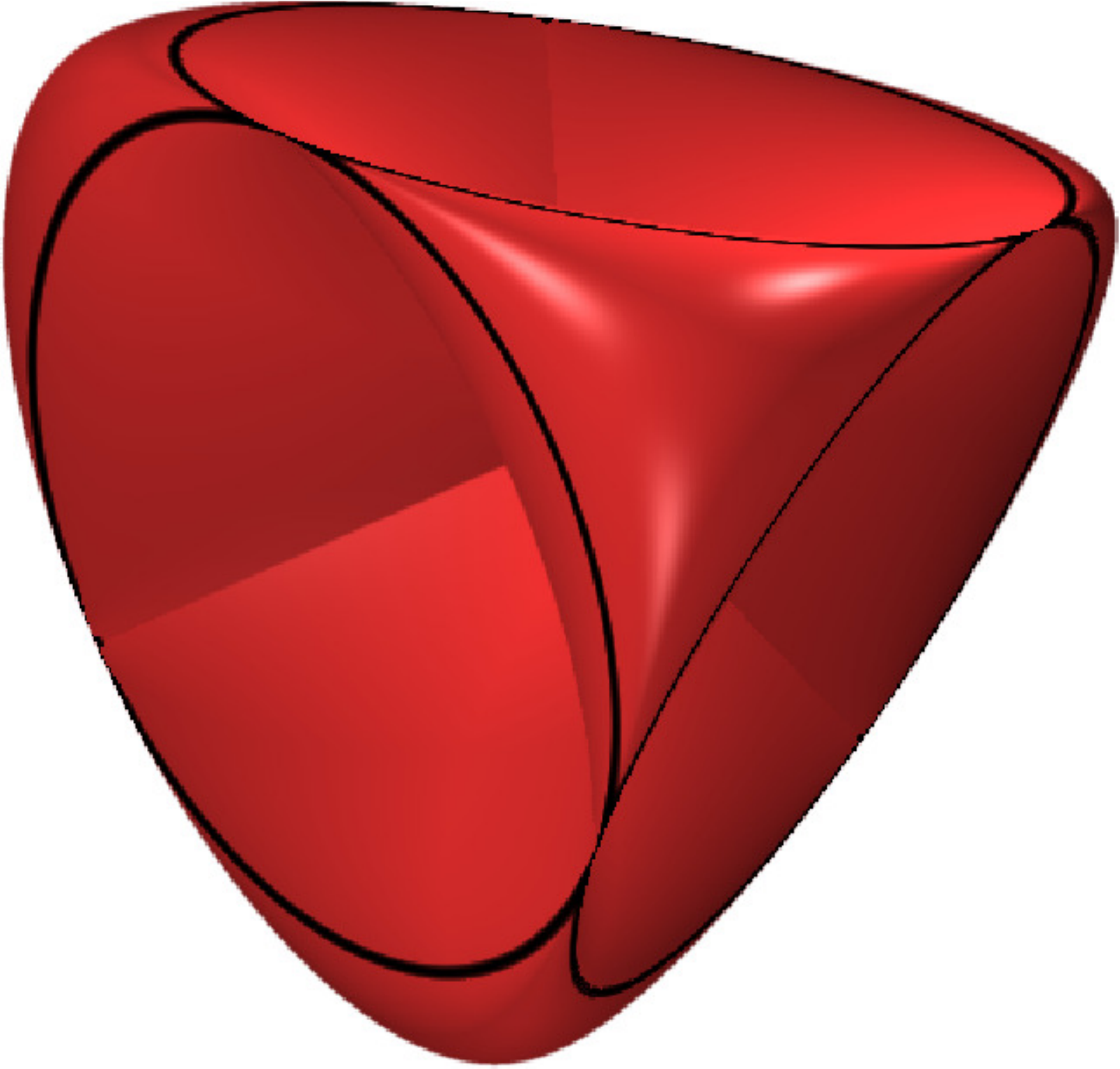}\includegraphics[width=4.8cm]{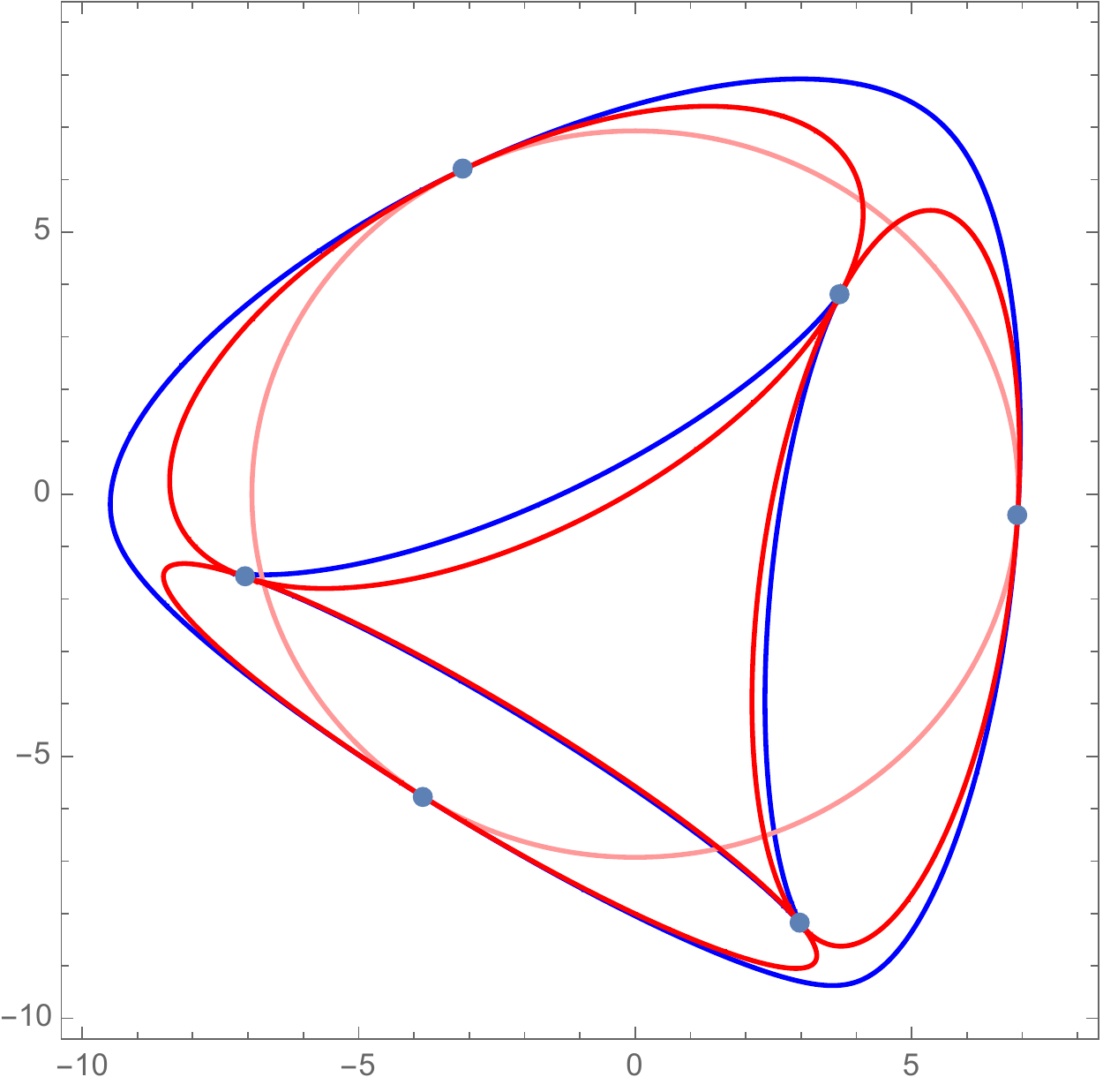}
\caption{The gradient of the elementary symmetric polynomial $E_3$ gives a bijection between the convex bodies of canonical parameters $C$ (left) and of sufficient statistics $K$~(middle).
The sextic branch curve of a projection of $K$
in the plane touches the four ellipses~(right).}
\label{fig:somosa}
\end{figure}

Here, our algebraic exponential family is defined by the gradient map of the cubic $E_3$:
$$ \nabla E_3 :  \R^4 \rightarrow \R^4 ,\,
\theta \mapsto  (
 \theta_2 \theta_3 {+} \theta_2 \theta_4 {+} \theta_3 \theta_4,
 \theta_1 \theta_3 {+} \theta_1 \theta_4 {+} \theta_3 \theta_4,
 \theta_1 \theta_2 {+} \theta_1 \theta_4 {+} \theta_2 \theta_4,
 \theta_1 \theta_2 {+} \theta_1 \theta_3 {+} \theta_2 \theta_3 ).  $$
 This map restricts to a bijection between $C$ and the interior of $K$.
The degree of $\nabla E_3$ is four, i.e.~generic
 points  $\sigma \in \C \PP^3$ have four preimages under the induced map
 $\nabla E_3: \C \PP^3 \dashrightarrow \C \PP^3$.
  But if $\sigma$ lies in the interior of $K$,
 then precisely one of these preimages lies in~$C = {\rm int}(K^\vee)$.

The map $\nabla E_3$ transforms the boundary of $C$ into the boundary of $K$ as follows.
The four circles in $\partial K$ are created by blowing up
the four vertices of $\partial C$, and the six points where they touch
are obtained by blowing down the six edges on $\partial C$.
The tetrahedron spanned by the vertices of $C$ is mapped to the
octahedron spanned by these six touching points in $\partial K$.

  We now fix a hyperplane $\mathcal{L}$
  in $\R^4$ that intersects the interior of $C$; see Figure~\ref{fig:yellowgreen}.
  The inclusion of $\mathcal{L} \simeq \R^3$ in $\R^4$ is dual to a projection
$\pi_\mathcal{L} : \R^4 \rightarrow  \R^3$.
The image of the cone $K$ under this map is dual to that seen in $C$ by intersecting with~$\mathcal{L}$,
i.e.~$\, \pi_\mathcal{L}(K) \,= \, (\mathcal{L} \cap C)^\vee $.
In the picture, we are cutting the body $C$ by
a plane, and dually we project $K$ into a plane.

    The image of $\mathcal{L}$ under
  $\nabla E_3$ is a quartic surface in $\C \PP^3$; see Figure~\ref{fig:redblue}.  That
  complex  surface is our {\em exponential
  variety}, denoted $\mathcal{L}^{\nabla E_3}$.
  The intersection curve $\,\{Q = 0\} \,\cap \,\mathcal{L}^{\nabla E_3} \,$
  decomposes into two components  having degrees $6$ and $10$.
  The sextic curve is smooth    when $\mathcal{L}$ is generic.
   It is the ramification locus of
  the Steiner surface $\{Q = 0\}$ under the map
 $\pi_\mathcal{L} : \C \PP^3 \dashrightarrow \C \PP^2$.

We now come to the punchline.
The  exponential variety $\mathcal{L}^{\nabla E_3}$ has
a distinguished positive part, namely the semialgebraic set
$\mathcal{L}^{\nabla E_3}_{\succ 0} = \mathcal{L}^{\nabla E_3}  \cap {\rm int}(K)$.
This comes with natural bijections
$$ \mathcal{L} \cap C  \,\,\longleftrightarrow\,\,
\mathcal{L}^{\nabla E_3}_{\succ 0}
 \,\, \longleftrightarrow \,\,{\rm int}(\pi_\mathcal{L}(K) ), $$
 to be derived  in Theorem  \ref{thm:sec4main}.
The intersection of the left body $C$ with a plane in Figure~\ref{fig:somosa} maps
bijectively to a quartic surface inside the middle body $K$.  That surface maps
bijectively onto the two-dimensional convex set
shown on the right in Figure~\ref{fig:somosa}. The branch curve is a plane curve of degree six, namely
the image of the ramification sextic. The four ellipses
in Figure~\ref{fig:somosa} (right) are the projections of the  circles in Figure~\ref{fig:somosa} (middle).
The six points are the projection of the vertices of the octahedron. We shall return to this
 in Example~\ref{ex:E_3c} and in Figure~\ref{fig:yellowgreen}.
\hfill $\diamondsuit$
\end{example}

\section{Exponential Families}
\label{sec:rat_exp_fam}

A statistical model is a family of probability distribution functions or probability density functions on
a sample space $(\cX,\nu,T)$. Here $\nu$ is the base measure on the set  $\cX$,
and $T: \cX \to \R^d$ is a measurable function
that represents the {\em sufficient statistics}. We fix an inner product
$\langle \,\cdot\,,\,\cdot\,\rangle$ on $\R^{d}$.
In the standard basis,  $\langle \theta , \sigma \rangle= \theta^{T} S \sigma$ for some positive definite symmetric matrix~$S$.
  An {\em exponential family} is a parametric statistical model on $(\cX,\nu,T)$ of the form
$$ p_\theta(x) \,\, = \,\, h(x) \cdot {\rm exp} \bigl( \langle \theta,T(x)\rangle - A(\theta) \bigr). $$
Here $h: \cX \rightarrow \R$ is a weight function which serves
to modify the given base measure $\nu$.
Without loss of generality, we can replace $\nu$ by $h(x) \cdot \nu$.
Afterwards we have $h(x)\equiv 1$. To simplify notation in later sections,
the following non-standard formulation will now be used.
 We replace $T(x)$ by $-T(x)$ and write all our exponential families in the form
\begin{equation}
\label{eq:expfam}
 p_\theta(x) \,\, = \,\,  {\rm exp} \bigl( -\langle \theta,T(x)\rangle - A(\theta) \bigr).
 \end{equation}
 The sample space $\cX$ itself is less important
 than its image $T(\cX)$ in $\R^d$. The reason is that  in (\ref{eq:expfam})
 we see the samples $x \in \cX$  only through their
  sufficient statistics $T(x) $. So, since
 $\nu$ induces a measure on $T(\cX)$, we might as well
 regard the set $T(\cX)$ as our space of data.

The function $A(\theta)$ is known as the {\em log-partition function}, or as the \emph{cumulant generating function}.
 It is uniquely determined by $(\cX,\nu,T)$ via the requirement that
$\int_\cX p_\theta(x) \nu(\dd x) = 1$.
This equation implies the following integral representation of the log-partition function:
\begin{equation}\label{eq_logpartition}
A(\theta)\;\;=\;\;\log \int_{\cX}\exp \bigl(-\langle \theta,T(x)\rangle \bigr)\,\nu(\dd x).
\end{equation}
The exponential of $A(\theta)$ is the {\em partition function} of the exponential family.

We are interested in the set of all parameters $\theta $ for which this integral is finite:
\begin{equation}
\label{eq:defC}
C \,\,:=\,\,\bigl\{\theta\in \R^{d}:\,\,A(\theta)<+\infty\bigr\}.
\end{equation}
By \cite[Theorem 1.13]{brown}, the set $C$ is convex in $\R^d$.
The function $A:\, C\to \R$ is convex, and it is
analytic on the interior of $C$. The convex set $C$ is the \emph{space of canonical parameters}. An exponential family is  said
to be \emph{regular} if $C$ is open in $\R^{d}$. It is  called \emph{minimal} if $T(\cX)$ spans the whole $\R^{d}$. See~\cite[Chapter 1]{brown} for standard definitions and basic properties of exponential families. We restrict our attention to exponential families that are minimal and regular.

Let $D A(\theta)$ be the differential of $A(\theta)$.
We view this as a linear form on $\R^{d}$. As such it lives in the dual
$\R^d$ with coordinates $\sigma = (\sigma_1,\ldots,\sigma_d)$ and plays a crucial role in this paper.

\begin{definition}[({\em Gradient map})] \rm
 Let $\theta^{*}\in \R^{d}$ be the unique vector such that $(D A(\theta))(\eta)=\langle \eta,-\theta^{*}\rangle$ for all $\eta\in \R^{d}$. The map $F:\R^{d}\to \R^{d}$, $\theta\mapsto \theta^{*}$ is called the \emph{gradient map}
 of $A(\theta)$.
\end{definition}

If $\langle\,\cdot\,,\,\cdot \,\rangle$ is the standard scalar product
then $\theta^{*}=-\nabla A(\theta)$, which explains the name. Formally, the gradient map maps $\R^{d}$ to its dual. The left $\R^d$ has coordinates $\theta$,
the right $\R^d$ has coordinates $\sigma$, and the pairing is the inner product
$\,\langle\theta, \sigma\rangle$.
The sufficient statistics $T(x)$ are elements of  $(\R^d,\sigma)$.
In that space we consider the convex hull of all sufficient statistics:
\begin{equation}
\label{eq:defK}
K\,\,:=\,\,{\rm conv}(T(\cX)) \,\,\, \subset \,\,\, \R^d.
\end{equation}
The following result is standard in the theory of exponential families (see~\cite[Theorem 3.6]{brown}).

\begin{theorem}
\label{thm:brown}
Let $(\cX,\nu,T)$ be an exponential family that is minimal and regular. Then the
gradient map $F$ defines an analytic bijection from $C$ to the interior of~$K$.
\end{theorem}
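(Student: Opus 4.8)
The plan is to establish the three standard facts that together prove the theorem: that $F$ maps $C$ into $K$, that $F$ is injective on the interior of $C$, and that the image of ${\rm int}(C)$ is exactly ${\rm int}(K)$. The core computational input is the differentiation-under-the-integral formula for the exponential family, which on the interior of $C$ gives
\begin{equation}
\label{eq:meanparam}
(DA(\theta))(\eta) \;=\; -\,\E_\theta\bigl[\langle \eta, T(X)\rangle\bigr] \;=\; \bigl\langle \eta, -\E_\theta[T(X)]\bigr\rangle,
\end{equation}
so that $F(\theta) = \theta^* = \E_\theta[T(X)]$, the expected value of the sufficient statistics under $p_\theta$. (The justification for moving the derivative inside uses that $A$ is analytic on ${\rm int}(C)$, which is quoted from \cite[Chapter 1]{brown}.) Since $\E_\theta[T(X)]$ is an average of points of $T(\cX)$ with respect to a probability measure, it lies in $K = {\rm conv}(T(\cX))$; a short argument using minimality — that $T(\cX)$ affinely spans $\R^d$ and the measure $p_\theta\,\nu$ has full support on the relevant part of $T(\cX)$ — shows the mean cannot lie on a proper face, hence $F({\rm int}(C)) \subseteq {\rm int}(K)$.

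For injectivity I would use strict convexity of $A$ on ${\rm int}(C)$. The Hessian of $A$ at $\theta$ equals the covariance matrix ${\rm Cov}_\theta(T(X))$ (up to the fixed inner-product matrix $S$), which is positive definite precisely because the family is minimal: if it were only positive semidefinite, some nonzero linear combination $\langle \eta, T(X)\rangle$ would be $p_\theta$-almost surely constant, contradicting that $T(\cX)$ spans $\R^d$. A strictly convex function has an injective gradient, so $F|_{{\rm int}(C)}$ is one-to-one. Analyticity of $A$ together with positive-definiteness of the Hessian also makes $F$ a local analytic diffeomorphism by the inverse function theorem, so the image $F({\rm int}(C))$ is open in $\R^d$.

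The remaining and genuinely substantive step is surjectivity onto ${\rm int}(K)$: given $\sigma \in {\rm int}(K)$, produce $\theta \in {\rm int}(C)$ with $F(\theta) = \sigma$. The standard route is variational: fix $\sigma \in {\rm int}(K)$ and consider the function $\theta \mapsto \langle \theta, \sigma\rangle + A(\theta)$ on ${\rm int}(C)$; its gradient is $\sigma - F(\theta)$, so a critical point is exactly a solution. One shows this convex function is coercive (its sublevel sets are bounded) using that $\sigma$ lies strictly inside $K$ — this is where regularity, i.e.\ $C$ open, and minimality are both used to rule out escape of minimizing sequences to the boundary of $C$ or to infinity — whence a minimizer exists in ${\rm int}(C)$. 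This coercivity/no-escape argument is the main obstacle, and it is essentially the content of \cite[Theorem 3.6]{brown}; I would either invoke that theorem directly or reproduce the estimate showing that along any ray $\theta + t\eta$ the quantity $\langle \theta + t\eta, \sigma\rangle + A(\theta + t\eta)$ grows because $\sigma$ is separated from the face of $K$ exposed by $\eta$. Combining: $F$ is a continuous injection of ${\rm int}(C)$ onto the open set ${\rm int}(K)$ with analytic local inverse, hence an analytic bijection ${\rm int}(C) \to {\rm int}(K)$; since the family is regular, $C = {\rm int}(C)$, completing the proof.
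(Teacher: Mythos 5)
Your proposal is correct in outline, but it is worth noting that the paper does not actually prove this theorem: it states the result as ``standard in the theory of exponential families'' and cites \cite[Theorem 3.6]{brown}, so there is no in-paper argument to compare against. What you have reconstructed is the standard textbook proof: the identification $F(\theta)=\E_\theta[T(X)]$ via differentiation under the integral, openness of the image and injectivity from strict convexity of $A$ (positive definiteness of the covariance Hessian, which is exactly where minimality enters), and surjectivity onto ${\rm int}(K)$ by minimizing the convex function $\theta\mapsto\langle\theta,\sigma\rangle+A(\theta)$, whose coercivity for $\sigma\in{\rm int}(K)$ is the genuinely hard estimate. All of these steps are sound, and your sign conventions match the paper's (with $p_\theta(x)=\exp(-\langle\theta,T(x)\rangle-A(\theta))$ one indeed gets $\theta^*=+\E_\theta[T(X)]$). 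The honest caveat is that the one substantive step --- ruling out escape of minimizing sequences to the boundary of $C$ or to infinity --- is precisely the step you end up deferring to \cite[Theorem 3.6]{brown}, i.e.\ to the same reference the paper invokes for the whole statement; so your write-up is a correct and more informative expansion of the citation rather than a fully self-contained proof. If you wanted to close that last gap, you would carry out the separation estimate you sketch: for $\sigma\in{\rm int}(K)$ and any direction $\eta$, the fact that $\sigma$ is not on the face of $K$ exposed by $\eta$ gives a set of positive $p_\theta\nu$-measure on which $\langle\eta,T(x)\rangle<\langle\eta,\sigma\rangle-\varepsilon$, which forces $\langle\theta+t\eta,\sigma\rangle+A(\theta+t\eta)\to+\infty$ as $t\to\infty$.
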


For extra clarity, let us paraphrase: the
space $C$ of canonical parameters is a convex set in the primal $(\R^d,\theta)$,
while the space $K$ of sufficient statistics is a convex set in the dual $(\R^d,\sigma)$.
The gradient of the log-partition function defines a bijection
between $C$ and ${\rm int}(K)$.
This paper concerns situations when this
bijection has desirable algebraic properties.

\begin{definition}[({\em Rational exponential family})]\label{df:REF} \rm
We say that an exponential family $(\cX,\nu,T)$ is
{\em rational} if the gradient map $F$
is a rational map $\,\R^d\dashrightarrow\R^d$.
\end{definition}

The case of primary interest to us arises when
the partition function is a negative power of a homogeneous polynomial $f$ in
the unknowns
$\theta= (\theta_1,\ldots,\theta_d)$. Suppose that $\langle\cdot,\cdot\rangle$ is the standard scalar product. In that case we have $A(\theta)=-\alpha \log f(\theta)$, where
$\alpha>0$ is a constant, and the gradient map
 is the rational~function
\begin{equation}
\label{eq:ratlmap1}
F \,:\,
\quad \R^d \dashrightarrow \R^d \,:\,\,\,
 \theta \,\mapsto \,\frac{\alpha}{f(\theta)}
\cdot
\bigl(\frac{ \partial f}{ \partial \theta_1},
\frac{ \partial f}{ \partial \theta_2}, \ldots,
\frac{ \partial f}{ \partial \theta_d} \bigr).
 \end{equation}
 For an algebraic geometer it is more natural to view this as a map
 of projective spaces:
 \begin{equation}
 \label{eq:ratlmap2}
F \,:\,
\C\PP^{d-1} \dashrightarrow \C\PP^{d-1} \,:\,\,
\theta \,\mapsto \,
\bigl(\frac{ \partial f}{ \partial \theta_1}:
\frac{ \partial f}{ \partial \theta_2}:\cdots:
\frac{ \partial f}{ \partial \theta_d} \bigr).
 \end{equation}

\begin{example}
\label{ex:E_3f} \rm
If $d = 4$ and $f = E_3$ is the third elementary symmetric polynomial,
then this is Example~\ref{ex:E_3}. Here $C$ and $K$ are
dual convex cones in $\R^4$, visualized in Figure~\ref{fig:somosa}
by their $3$-dimensional cross sections.  These live in the real part $\R \PP^3$
of $\C\PP^3$. Theorem \ref{thm:brown} tells us that the map
(\ref{eq:ratlmap2}) restricts to a bijection between
the interiors of the two convex bodies.
\hfill $\diamondsuit$
\end{example}

We next present two models that are fundamental in statistics and its applications.

\begin{example}[({\em Multivariate Gaussian family})]
\label{ex:fullgaussian}
\rm
Let $\cX=\R^{m}$, where $\nu$ is the Lebesgue measure, and set
$T(x)=\frac{1}{2}\,x  x^T \in \R^{m\times m}$.
 Here $d={m+1\choose 2}$ and we identify $\R^{d}$ with the space $\S^{m}$ of
 real symmetric $m\times m$ matrices with inner product $\langle A,B\rangle={\rm tr}(AB)$. So, the sufficient statistics are symmetric matrices of rank $\leq 1$. For this exponential family,
    the convex set $C$ in (\ref{eq:defC}) is the cone of
  positive definite matrices in $\S^{m}$, which we denote by ${\rm PD}_{m}$. This cone is open and selfdual.
  The set $K$ in (\ref{eq:defK}) is its closure --- the cone of positive semidefinite matrices in $\S^m$.
The integral in   (\ref{eq_logpartition}) is the standard multivariate Gaussian integral.
We find
$$
A(\theta) \,\,=\,\,-\frac{1}{2}\log\det(\theta) + \frac{m}{2}\log(2\pi).
$$
Hence the multivariate Gaussian distribution on $\R^m$ forms a  rational exponential family.
Its defining polynomial $f(\theta)$ is the determinant of  a symmetric $m \times m$-matrix, and
we have $\alpha = 1/2$.
The gradient map $F$ in (\ref{eq:ratlmap1})  is $\,\theta \mapsto \frac{1}{2}\theta^{-1}$.
Theorem \ref{thm:brown} says that matrix inversion is an involution on the open
cone $C = {\rm int}(K)$ of positive definite matrices in $\S^{m}$.
\hfill $\diamondsuit$
\end{example}

\begin{example}[({\em Full discrete family})]\label{sub:Full_discrete} \rm
We consider discrete random variables with finite state space
$\cX=\{1,2,\ldots , d\}$. Here $\nu$ is the counting measure on $\cX$, and the
sufficient statistics are the unit vectors $T(i)={\bf e}_i$ in $\R^d$.
The probability distribution function (\ref{eq:expfam}) has the form
$$  p_\theta(i) \,\, = \,\,  {\rm exp} \bigl( -\theta^T \cdot T(i) - A(\theta) \bigr) \,\, = \,\,
\frac{e^{-{\theta_i}}}{e^{A(\theta)}}. $$
 The log-partition function equals
 \begin{equation}
 \label{eq:FDF1}
 A(\theta) \,\,=\,\,\log\bigl( e^{-\theta_1} + e^{-\theta_2} + \cdots + e^{-\theta_d}  \bigr).
 \end{equation}
 The gradient map $F$ satisfies
\begin{equation}
 \label{eq:FDF2}
 F(\theta) \,\, = \,\,
-\nabla A(\theta) \,=\,
\frac{1}{\sum_{i=1}^{d} e^{-\theta_i}} \cdot
\left(e^{-\theta_1},e^{-\theta_2},\ldots ,e^{-\theta_{d}}\right).
\end{equation}
The convex set in (\ref{eq:defC}) is the entire space, $C=\mathbb{R}^{d}$,
so this family is regular.
The space of sufficient statistics, defined in  (\ref{eq:defK}),  is the closed
$(d-1)$-dimensional probability simplex
$$ K \,\,=\,\, {\rm conv}({\bf e}_1,{\bf e}_2,\ldots,{\bf e}_d) \,\,= \,\,
\bigl\{ \sigma \in \R^d_{\geq 0} \,:\, \sigma_1 + \sigma_2 + \cdots + \sigma_d = 1 \bigr\}.$$
The gradient map takes $C$ to the interior of $K$. But
this is not a bijection. Theorem \ref{thm:brown} does not hold here
because this exponential family is not minimal.
Following  \cite[Example 1.3]{brown}, we can turn this into a
{\em minimal} exponential family by replacing $d$ with $d-1$ and
projecting $T(x)$ on the first $d-1$ coordinates. Now $C = \R^{d-1}$,
 the formulas (\ref{eq:FDF1}) and (\ref{eq:FDF2}) hold with $\theta_d = 0$,
and the gradient map (\ref{eq:FDF2}) gives a bijection
between $C$ and the open simplex ${\rm int}(K)$.

The gradient map is analytic but not algebraic, so this
is not a rational exponential family.
This can be fixed by replacing the
parameter space $C = \R^{d-1}$ with the open orthant
$\R^{d-1}_{> 0}$, by way of the bijection
$r: \theta \mapsto (e^{-\theta_1},\dots,e^{-\theta_{d-1}})$.
The gradient map  is the composition of $r$ and a rational function.
This will allow us in  Example \ref{ex:rationaldiscrete1}
to obtain {\em discrete exponential families} from lattices $\mathcal{L}$.
 As noted in \cite{DS,HS,RKA} and elsewhere,
these are precisely {\em toric varieties}. \hfill $\diamondsuit$
\end{example}

We close this section by pointing out a connection between statistics and optimization
which seems to have not yet been stated explicitly  in the mathematical literature.

\begin{remark} \rm
In his 1997 paper  \cite{guler1}, G\"uler presented a
framework for {\em interior point methods} in convex optimization
based on canonical barrier functions. This theory can be viewed as a special case of exponential
family theory. The following discussion  explains how this works.

Fix a $d$-dimensional pointed convex cone $K$ in $\R^d$, and
let $\nu$ be the Lebesgue measure on $K$.
To match the notation above, set $\cX = K$ and let $T$ be the identity on $K$.
Then, in the setting of  \cite[Section 3]{guler1},
the log-partition function $A(\theta)$ in (\ref{eq_logpartition}) becomes
   the logarithm of the \emph{characteristic function}
 of the dual cone  $K^{\vee} = \overline{C}$.  This is, up to an additive constant term $\log d!$, the  {\em universal barrier function} of~$C$:
\begin{equation}
\label{eq:Athetaintegral}
A(\theta)\,\,=\,\,\,\log \int_{K}\exp(-\langle \theta,\sigma\rangle)\dd \sigma.
\end{equation}
For example, let $C$ be the
positive definite cone ${\rm PD}_{m}$. In G\"uler's theory \cite{guler1,guler2},
 the integral~(\ref{eq:Athetaintegral})  corresponds to interior point methods in
 {\em semidefinite programming} and the induced rational exponential 
 family is precisely that in
 Example \ref{ex:fullgaussian}. In other words, the
multivariate Gaussian family comes from
the universal barrier function of $\textrm{PD}_m$.


 Our gradient map $F$ is known as the
{\em duality mapping} in convex optimization \cite[Section 5]{guler1}.
That it defines a bijection between the interiors of $C$ and $K$
is shown in \cite[Theorem 5.1]{guler1}. This is a special case of
what statisticians know about
exponential families  (Theorem~\ref{thm:brown}).
\end{remark}

The optimization problem of most interest in statistics is that of finding the maximum likelihood estimator (MLE). For an exponential family the solution to this problem can be described as follows.
Suppose we are given $n$ independent observations
$x_1,\ldots,x_n$ in $\cX$. We record these observations by the average value
of their sufficient statistics. This is the~point
$$ \hat\sigma \, = \, \frac{1}{n} \bigl( T(x_1) + T(x_2) + \cdots + T(x_n)\bigr) \,\,\, \in \,\, K . $$
The MLE is defined as the canonical parameter
vector $\hat \theta$ that makes the data $x_1,\ldots,x_n$ most likely.
By \cite[Theorem 5.5]{brown},  the MLE exists and is unique
precisely when $\hat\sigma$ lies in the interior of~$K$. If this holds, then
$\hat \theta$ is the unique inverse image of $\hat\sigma$ in $C$
under the gradient map.

In other words, computing the MLE for an exponential family means intersecting the
fiber $F^{-1}(\hat \sigma)$ 
of the gradient map $F$ with the convex set $C$.
For a rational exponential family we can study this fiber
and its degree 
using methods from algebraic geometry.
This was pioneered in~\cite{stuhl} for the special case of
linear Gaussian concentration models. It will be~developed~in~full generality in this paper. The pertinent algebraic complexity measures,
namely the gradient multidegree and the maximum likelihood degree
(ML degree), will be introduced in Section~\ref{sec:ml_degree}.

\section{Hyperbolic Polynomials and Riesz Kernels}
\label{sec:dis_gau_hyp}

In this section, we introduce a new class of rational exponential families associated with hyperbolic polynomials and their
 hyperbolicity cones.
Recall (e.g.~from \cite{guler2,KPV}) that a  homogeneous polynomial $f $ of degree $p$ in $\theta=(\theta_1,\ldots,\theta_d)$
is {\em hyperbolic} if there exists a point $\tau$ in
$\R^d \backslash \{f=0\}$ such that every line
through $\tau$ meets the hypersurface $\{f=0\}$ in $p$ real points, counting multiplicities.
Assume that this holds and let $C$ be the connected component
of $\R^d \backslash \{f=0\}$ that contains $\tau$.
It is known that $C$ is an open convex cone in $\R^d$.
This cone is called the {\em hyperbolicity cone} of~$f$.
As seen in Example  \ref{ex_prodlinform}, it may depend on $\tau$.

\begin{example}
\label{ex_Gaussian_hyp} \rm
The standard example of a hyperbolic polynomial is the determinant of a symmetric
$m \times m$-matrix of unknowns. Here $d = \binom{m+1}{2}$, the hyperbolicity
cone $C$ consists of the positive definite matrices in $\mathbb{S}^m$, where the witness point $\tau$ can be taken to be the identity matrix. Hence, we
 recover the Gaussian family in Example \ref{ex:fullgaussian}.~$\diamondsuit$
\end{example}

\begin{example}
\label{ex_prodlinform}
\rm
Let $C$ be a convex polyhedral cone in $\R^d$, defined by
$s$ linear inequalities  $\ell_1(\theta) > 0, \ldots,  \ell_s(\theta) > 0$.
The product of linear forms $f(\theta) = \prod_{i=1}^s \ell_i(\theta)$ is a
hyperbolic polynomial. The hyperbolicity cone $C$
can here be replaced with any of the other chambers in the arrangement
of hyperplanes $\{\ell_j = 0\}_{j=1,\ldots,s}$.
In the setting of \cite{guler1,guler2}, this models interior point methods for {\em linear programming}, with barrier function
$A(\theta) = -\sum_{j=1}^s {\rm log}(\ell_j(\theta))$.
Its gradient map
defines a bijection between  $C$
and the interior of the dual cone $K = C^\vee$.
\hfill $\diamondsuit$
\end{example}

The following recent result by Scott and Sokal~\cite{sokal} motivates studying hyperbolic polynomials in the context of exponential families. It shows that any homogeneous polynomial that is (up to a negative power) the partition function of an exponential family must be hyperbolic.

\begin{theorem}
\label{th:gardingconverse}
Let $f$ be a homogeneous polynomial in $\R[\theta_{1},\ldots, \theta_{d}]$ that is strictly positive on an open convex cone $C$. If there exists a positive number $\alpha$ and a measure $\nu$ such that
$$
f(\theta)^{-\alpha}=\int_{C^{\vee}}\exp(-\langle \theta,\sigma\rangle)\;\nu(\dd \sigma) \qquad \hbox{for all $\theta\in C$,}
$$
then $f$ is hyperbolic with respect to each point in the cone $C$.
\end{theorem}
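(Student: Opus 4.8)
The plan is to show that the Laplace-transform representation of $f(\theta)^{-\alpha}$ forces a Gårding-type hyperbolicity property. The key idea is to analyze, for a fixed point $\tau \in C$ and an arbitrary direction $v \in \R^d$, the univariate polynomial $t \mapsto f(\tau + tv)$ and prove that all its roots are real. First I would fix $\tau \in C$ and $v \in \R^d$, and consider the function
$$
g(t) \;=\; f(\tau + tv)^{-\alpha} \;=\; \int_{C^\vee} \exp\bigl(-\langle \tau + tv, \sigma\rangle\bigr)\, \nu(\dd\sigma)
$$
for $t$ in a neighborhood of $0$ in $\R$ where $\tau + tv$ stays in $C$. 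The right-hand side extends to an analytic (indeed entire, after justifying convergence) function of the complex variable $t$ in a suitable strip or half-plane, and one checks that $f(\tau + tv)$ has no complex zeros in the region where the integral converges: if $f(\tau + t_0 v) = 0$ for some complex $t_0$ with real part in the relevant interval, then $|g(t)| \to \infty$ as $t \to t_0$, but the integral is uniformly bounded on vertical lines provided the real part of $\langle tv, \sigma\rangle$ stays controlled on $C^\vee$. This is where the cone structure enters: for $\theta \in C$ the pairing $\langle \theta, \sigma\rangle$ is positive on $C^\vee \setminus \{0\}$, which gives the integrability and the domain of holomorphy.

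Next I would invoke the structure theorem behind \cite{sokal}: a homogeneous polynomial whose reciprocal power is a Laplace transform of a positive measure supported in a cone is, by classical results on Laplace transforms of positive measures (Bernstein–Hausdorff–Widder type, or the half-plane property / Gårding's characterization), real-stable in the cone direction. Concretely, one shows that $t \mapsto f(\tau + tv)$ has only real roots by arguing that $f(\theta)^{-\alpha}$, being a Laplace transform of a positive measure on $C^\vee$, is completely monotone along rays into $C$, and its reciprocal power structure then pins down the root locations. Combined with homogeneity of degree $p$, the univariate restriction $f(\tau + tv)$ is a polynomial of degree exactly $p$ in $t$ (assuming $v$ is generic; the general case follows by a limiting/continuity argument on the coefficients), and we conclude all $p$ roots are real. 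Since $\tau \in C$ was arbitrary, $f$ is hyperbolic with respect to every point of $C$.

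The main obstacle I anticipate is the analytic continuation and growth estimate step: one must rule out complex zeros of $f(\tau + tv)$ in the half-plane (or strip) of convergence, and for this the bound on $\int_{C^\vee}\exp(-\langle \theta,\sigma\rangle)\,\nu(\dd\sigma)$ must be made uniform as $\mathrm{Re}\,\theta$ stays in a compact subset of $C$ while $\mathrm{Im}\,\theta$ varies — this requires that $|\exp(-\langle \theta,\sigma\rangle)| = \exp(-\langle \mathrm{Re}\,\theta,\sigma\rangle)$ is dominated by an integrable function independent of $\mathrm{Im}\,\theta$, which holds because $\mathrm{Re}\,\theta \in C$ forces $\langle \mathrm{Re}\,\theta,\sigma\rangle \geq 0$ on $C^\vee$. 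A secondary technical point is the degree-$p$ claim for the restriction: one needs that $v$ can be chosen so the leading coefficient $f(v)$ is nonzero, and then extend to all $v$ by continuity of roots, which is standard but should be stated. The homogeneity of $f$ is essential here — it guarantees the degree of $f(\tau + tv)$ in $t$ is at most $p$ with equality for generic $v$ — and it is precisely what lets us pass from "no complex roots in a half-plane" to "all roots real."
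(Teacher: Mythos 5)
Your overall route is the same as the paper's: use the Laplace-transform representation to show that $f$ is nonvanishing on a complex tube over $C$, and then deduce hyperbolicity. The paper obtains the nonvanishing by quoting Scott--Sokal (the representation makes $f^{-\alpha}$ completely monotone on $C$, and their Corollary~2.3 then gives $f\neq 0$ on $C+i\R^{d}$), whereas you sketch a direct proof of that lemma via boundedness of the integral when $\mathrm{Re}\,\theta$ stays in $C$; that sketch is sound and is essentially how the cited result is proved.

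The genuine gap is in your concluding step. What your argument actually delivers is: for fixed $\tau\in C$ and $v\in\R^{d}$, the polynomial $h(t)=f(\tau+tv)$ has no zeros in the vertical strip $\{t\in\C:\tau+\mathrm{Re}(t)\,v\in C\}$. From this you assert ``we conclude all $p$ roots are real,'' but that implication is false for a univariate polynomial: $h(t)=(t-1)^{2}+1$ has no zeros in the strip $|\mathrm{Re}(t)|<1/2$ yet has no real roots whatsoever. The roots you need to control lie \emph{outside} the strip of convergence (even the real ones sit where $\tau+tv\notin C$, since $f>0$ on $C$), so the strip statement says nothing about them directly. The missing ingredient is the homogeneity rotation: from $f\neq 0$ on $C+i\R^{d}$ and $f(iz)=i^{p}f(z)$ one gets $f\neq 0$ on $\R^{d}+iC$, and, since $f$ has real coefficients, also on $\R^{d}-iC$; then for any $x\in\R^{d}$ a nonreal root $s=a+ib$ of $s\mapsto f(x+s\tau)$ would give $f\bigl((x+a\tau)+ib\tau\bigr)=0$ with $b\tau\in C$ or $-b\tau\in C$, a contradiction. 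Hyperbolicity with respect to $\tau$ then follows after the standard reparametrization relating the roots of $f(x+s\tau)$ to those of $f(\tau+tv)$. You correctly identify homogeneity as essential, but you never perform this rotation; as written, the passage from ``no complex zeros in a half-plane/strip'' to ``all roots real'' is a non sequitur, and it is precisely the step that turns the tube theorem into hyperbolicity.
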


\begin{proof}
A real analytic function $g$ is {\em completely monotone} on a convex cone $C$ if and only~if
\begin{equation}\label{def:cm}
(-1)^k\frac{\partial^k g}{\partial_{ u_1}\cdots\partial_{u_k}}\,\geq\, 0 \,\,\,\text{ on }C \qquad
\hbox{ for all $k\in \N$ and $u_1,\dots,u_k\in C$.}
\end{equation}
There is a beautiful characterization of completely monotone functions, known as 
the Bernstein-Hausdorff-Widder-Choquet Theorem \cite{Choquet,sokal}. Namely, an analytic function $g$ defined on $C$ is completely monotone if and only if there exists a  measure $\mu$ on the dual cone $C^\vee$ such that:
$$g(\theta)\,\,=\,\,\int_{C^{\vee}}\exp(-\langle \theta,\sigma\rangle)\;\mu(\dd \sigma).$$

By our assumptions it follows that
the function $f(\theta)^{-\alpha}$ is completely monotone on
the convex cone~$C$. Then, by \cite[Corollary 2.3]{sokal}, we
can conclude that~$f$ has no zeros in the subset~$C+i\R^{d}$ of~$\C^d$.
This means that the polynomial~$f$ is hyperbolic with respect to each point~$\tau$ in $C$.
\end{proof}

The following theorem, essentially due to  G{\aa}rding~\cite{garding}, suggests a construction of exponential families from complete hyperbolic polynomials.
A hyperbolic polynomial is called {\em complete} if
the hyperbolicity cone $C$ is pointed, that is, $ \,C \,\cap \,(-C) = \{{\bf 0}\}$ in $\R^d$. 
  In what follows, both $\dd \eta$ and $\dd \sigma$ denote the Lebesgue measure on $\R^d$. The appearance of the non-real scalar
 $i = \sqrt{-1}$ may look puzzling at first, but it is essential for G{\aa}rding's  theory.
 
\begin{theorem}\label{th:garding}
Let $f$ be a complete hyperbolic polynomial
with hyperbolicity cone $\,C \subset \R^d$, and  fix $\,\alpha > d$.
The following integral converges for any $\theta \in $C,
 its value does not depend on the choice of $\theta$,
 and it is supported on  $K = C^\vee$:
\begin{equation}
\label{eq:intrep}
q_{\alpha}(\sigma)\,\,=\,\,(2\pi)^{-d}\int_{\R^{d}} f(\theta+i\eta)^{-\alpha} \exp(\langle\theta+i \eta, \sigma\rangle)\dd \eta.
\end{equation}
The polynomial $f$ can be recovered via the formula
\begin{equation}
\label{eq:intrep2}
\begin{matrix} \,\,\,
 f(\theta)^{-\alpha}&=&\int_{K}\exp(-\langle \theta,\sigma\rangle)q_{\alpha}(\sigma)\dd \sigma \qquad
\hbox{for all $\,\theta\in C$.}
\end{matrix}
\end{equation}
\end{theorem}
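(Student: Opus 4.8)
The plan is to prove this via G\aa{}rding's classical theory of hyperbolic polynomials, realizing $q_\alpha$ as a Fourier transform and $f^{-\alpha}$ as its inverse Fourier transform, with all integrals justified by decay estimates on $|f(\theta+i\eta)|$. First I would recall G\aa{}rding's fundamental estimate: if $f$ is hyperbolic with respect to a point in the open cone $C$, then for every $\theta \in C$ there is a constant $c_\theta > 0$ and an exponent (namely the degree $p$ of $f$) such that $|f(\theta + i\eta)| \geq c_\theta (1 + |\eta|)^{?}$ fails in general, but the correct statement — which is what one actually needs — is the lower bound $|f(\theta+i\eta)| \geq |f(\theta)| \cdot \prod$ (something controlling the distance of $\theta$ to $\partial C$), combined with the homogeneity to get $|f(\theta+i\eta)|^{-\alpha} = O(|\eta|^{-\alpha p})$ as $|\eta| \to \infty$ along $\R^d$ with $\theta$ fixed; since $\alpha > d$ and $p \geq 1$ we get $\alpha p > d$, so $f(\theta + i\eta)^{-\alpha}$ is integrable in $\eta$ over $\R^d$. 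This is the step I expect to be the main obstacle: one must be careful that the estimate is uniform enough to also handle the second integral \eqref{eq:intrep2}, and to interchange the order of integration in the Fourier inversion. I would cite G\aa{}rding \cite{garding} (and possibly \cite{guler2,KPV}) for the precise inequality $|f(\theta+i\eta)| \geq f(\theta) \prod_{j}(\dots)$ rather than reprove it.

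Second, I would show the integral \eqref{eq:intrep} is independent of $\theta \in C$. The integrand $g(\zeta) := f(\zeta)^{-\alpha}\exp(\langle \zeta,\sigma\rangle)$ is holomorphic on the tube domain $C + i\R^d \subset \C^d$, since $f$ has no zeros there (this is exactly the hyperbolicity characterization used in the proof of Theorem~\ref{th:gardingconverse}). By Cauchy's theorem applied to the holomorphic function $\eta \mapsto g(\theta + i\eta)$ — more precisely, by a contour-shift argument in the tube, using the decay estimate above to kill the contributions at infinity — the integral $\int_{\R^d} g(\theta + i\eta)\,\dd\eta$ does not change as $\theta$ varies within the connected open set $C$. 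Thus $q_\alpha(\sigma)$ is well-defined.

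Third, I would identify $q_\alpha$ as supported on $K = C^\vee$. Fix $\sigma \notin K = C^\vee$; then there is $u \in C$ with $\langle u, \sigma\rangle < 0$. Using homogeneity, replace $\theta$ by $t\theta$ for the particular witness direction, or better: shift $\theta \to \theta + su$ with $s \to \infty$ along the ray. The factor $\exp(\langle \theta + su, \sigma\rangle) = e^{s\langle u,\sigma\rangle}\exp(\langle\theta,\sigma\rangle)$ decays exponentially, while $|f(\theta + su + i\eta)|^{-\alpha}$ is controlled (by homogeneity $f(s(\,\cdot\,)) = s^p f(\cdot)$, the prefactor only helps), so letting $s\to\infty$ forces $q_\alpha(\sigma) = 0$. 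Hence $\operatorname{supp} q_\alpha \subseteq K$.

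Finally, for \eqref{eq:intrep2}: by construction $q_\alpha$ is, up to the $(2\pi)^{-d}$ normalization, the inverse Fourier transform of $\eta \mapsto f(\theta + i\eta)^{-\alpha}$ evaluated against the real exponential $e^{\langle\theta,\sigma\rangle}$; equivalently $f(\theta+i\eta)^{-\alpha}e^{\langle\theta,\sigma\rangle}$ and $q_\alpha$ form a Fourier pair in the Schwartz/tempered sense once the decay in $\eta$ is established. I would then apply Fourier inversion: multiply \eqref{eq:intrep} by $\exp(-\langle\theta,\sigma\rangle)$, integrate $\dd\sigma$ over $\R^d$ (legitimately restricted to $K$ by the support statement), and use Fubini — justified again by the joint integrability coming from the $\alpha p > d$ decay in $\eta$ together with the Gaussian-type decay one arranges in $\sigma$ by a further contour shift — to recover $f(\theta)^{-\alpha}$ for all $\theta \in C$. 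The only genuinely delicate point throughout is making the interchange of integrals and the contour shifts rigorous, which rests entirely on G\aa{}rding's polynomial-growth lower bound for $|f(\theta+i\eta)|$; everything else is bookkeeping with Cauchy's theorem and Fourier inversion.
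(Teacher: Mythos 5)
The paper's own proof of Theorem~\ref{th:garding} is a one-line citation of Theorem~3.1 in G{\aa}rding \cite{garding} (with \cite[Theorem 6.4]{guler2} invoked for the sufficiency of $\alpha>d$), so by reconstructing the Fourier-analytic argument you are doing strictly more than the authors do. Your architecture --- holomorphy of $f^{-\alpha}$ on the tube $C+i\R^d$, a Cauchy contour shift for the $\theta$-independence of \eqref{eq:intrep}, a Paley--Wiener-type shift $\theta\mapsto\theta+su$ for the support statement, and Fourier inversion plus Fubini for \eqref{eq:intrep2} --- is indeed the skeleton of G{\aa}rding's proof, and those steps are fine as sketched.

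There is, however, a genuine flaw in the one estimate on which everything rests. The claimed decay $|f(\theta+i\eta)|^{-\alpha}=O(|\eta|^{-\alpha p})$ is false in general: writing $f(\eta+s\theta)=f(\theta)\prod_{j=1}^p\bigl(s+\lambda_j(\eta)\bigr)$ with real roots $\lambda_j(\eta)$, one gets $|f(\theta+i\eta)|=f(\theta)\prod_{j=1}^p\bigl(1+\lambda_j(\eta)^2\bigr)^{1/2}$, and along directions where some $\lambda_j(\eta)$ vanish this grows much more slowly than $|\eta|^p$ (for $f=\theta_1\cdots\theta_d$ and $\eta=(t,0,\dots,0)$ it grows only linearly in $t$). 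Consequently your criterion ``$\alpha p>d$ implies integrability'' is not valid: for $f=\theta_1$ in $\R^2$ one has $\alpha p=\alpha>2=d$, yet $\int_{\R^2}|\theta_1+i\eta_1|^{-\alpha}\,\dd\eta=\infty$ because the integrand is independent of $\eta_2$. What rescues the theorem is precisely the hypothesis that $f$ is \emph{complete}, which your argument never invokes at this step: the common zero set of all the $\lambda_j$ is the lineality space of the hyperbolicity cone, so completeness gives $\max_j|\lambda_j(\eta)|\ge c\,|\eta|$ by homogeneity and compactness of the unit sphere, and keeping only the largest factor of the product yields $|f(\theta+i\eta)|\ge f(\theta)\bigl(1+c^2|\eta|^2\bigr)^{1/2}$. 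This gives decay of order $|\eta|^{-\alpha}$ --- one power of $\alpha$, not $\alpha p$ --- which is integrable over $\R^d$ exactly when $\alpha>d$; this is where the hypothesis of the theorem comes from, and it also explains why $\alpha>d$ is sufficient but not necessary (the product $\theta_1\cdots\theta_d$ already converges for $\alpha>1$). With the estimate corrected in this way, the contour shifts, the support argument, and the Fourier inversion for \eqref{eq:intrep2} go through as you describe.
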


\begin{proof} It follows directly from Theorem~3.1 in G{\aa}rding's paper~\cite{garding} that
the formula $f(\theta)^{-\alpha}=\int_{K}\exp(-\langle \theta,\sigma\rangle)
q_{\alpha}(\sigma)\dd \sigma\,$
holds for all $\theta \in C$, that
 $q_{\alpha}(\sigma)$ does not depend on the choice $\theta\in C$,
and that  $q_{\alpha}(\sigma)$
vanishes for $\sigma \notin K$. In general,
the expression for $q_{\alpha}(\sigma)$ may not be a well-defined function. However, it is well-defined if $\alpha>d$. See also~\cite[Theorem 6.4]{guler2}. As we shall
see later, the condition $\alpha>d$ is only sufficient but not necessary.
\end{proof}

The function $q_\alpha(\sigma)$ in Theorem~\ref{th:garding} is known
as the \emph{Riesz kernel}. It is uniquely determined (up to a set
with Lebesgue measure zero) by the integral representation (\ref{eq:intrep}). If $q_{\alpha}\geq 0$, then $\nu_{\alpha}(\dd \sigma) = q_{\alpha}(\sigma)\dd \sigma$ defines a measure on $K$ and allows the construction of \emph{hyperbolic exponential families}. Various authors suggested that $q_{\alpha}$ is  nonnegative, see e.g. \cite[p 371]{guler2}. However, we are not aware of any proof of this fact and we therefore state this as a conjecture:

\begin{conjecture}\label{conj_Riesz} 
 For $\alpha \gg 0$
 the Riesz kernel  $q_\alpha(\sigma)$ in (\ref{eq:intrep})
takes nonnegative values on~$K$.
\end{conjecture}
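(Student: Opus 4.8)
The plan is to attack Conjecture~\ref{conj_Riesz} by combining the explicit structure of the Riesz kernel coming from G{\aa}rding's formula~(\ref{eq:intrep}) with an asymptotic analysis as $\alpha\to\infty$. The starting observation is that $q_\alpha$ is (up to normalization) an inverse Laplace transform: by~(\ref{eq:intrep2}) we have $f(\theta)^{-\alpha}=\int_K e^{-\langle\theta,\sigma\rangle}q_\alpha(\sigma)\,\dd\sigma$, so $q_\alpha$ is the density whose Laplace transform is $f^{-\alpha}$. First I would reduce to the case where $f$ is normalized at the witness point, say $f(\tau)=1$ for a fixed $\tau\in C$, and rescale so that $f(\theta)^{-\alpha}=f(\theta/\tau\text{-coords})$ in suitable coordinates. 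Then I would try to show that for large $\alpha$ the measure $q_\alpha(\sigma)\,\dd\sigma$, after an affine rescaling of $\sigma$ by a factor of order $\alpha$, converges weakly to a probability measure that is manifestly nonnegative — e.g. a Gaussian or, more precisely, the pushforward under the gradient map $\nabla\log f$ of a Gaussian concentrated near $\tau$. Heuristically $f(\theta)^{-\alpha}=\exp(-\alpha\log f(\theta))$ and $-\log f$ is convex on $C$, so Laplace/saddle-point asymptotics suggest $q_\alpha$ concentrates near the point $\nabla(-\alpha\log f)$ evaluated appropriately, with Gaussian fluctuations; the leading term of such an expansion is a nonnegative Gaussian density.

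The key steps, in order, would be: (1) write $q_\alpha(\sigma)=(2\pi)^{-d}e^{\langle\theta,\sigma\rangle}\int_{\R^d}e^{-\alpha\log f(\theta+i\eta)}e^{i\langle\eta,\sigma\rangle}\,\dd\eta$ and choose $\theta=\theta(\sigma,\alpha)\in C$ to be the saddle point, i.e. the solution of $\alpha\,\nabla\log f(\theta)=-\sigma$ (which exists and is unique for $\sigma\in\mathrm{int}(K)$ by Theorems~\ref{thm:brown} and~\ref{th:garding}, since $\nabla\log f$ up to scaling is the gradient map of a hyperbolic exponential family); (2) perform a steepest-descent analysis of the $\eta$-integral along this choice of $\theta$, using that $\mathrm{Re}\,\log f(\theta+i\eta)$ has a strict minimum at $\eta=0$ on any compact set and grows at infinity (here hyperbolicity, i.e. $f\neq 0$ on $C+i\R^d$, is exactly what guarantees the integrand never blows up); (3) extract the Gaussian leading term, whose coefficient is $\det(\mathrm{Hess}\,\log f(\theta))^{-1/2}$ up to positive constants — this is positive because $\log f$ is concave on $C$ (equivalently $-\log f$ is a convex barrier, cf.~the G{\aa}rding convexity of the hyperbolicity cone) so the Hessian is negative definite; (4) bound the error terms uniformly on compact subsets of $\mathrm{int}(K)$, so that for $\alpha$ large enough the error is dominated by the positive leading term, giving $q_\alpha>0$ there; (5) handle the boundary of $K$ separately, noting $q_\alpha$ is supported on $K$ and, near $\partial K$, is governed by the order of vanishing of $f$ along the corresponding face — one expects $q_\alpha$ to vanish or stay nonnegative there by a limiting argument or by the semicontinuity of the construction.

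I expect the main obstacle to be step (4): making the saddle-point estimate genuinely uniform, including uniform in $\sigma$ as $\sigma$ ranges over a compact subset of $\mathrm{int}(K)$ and uniform control of the tail of the $\eta$-integral. The difficulty is that the curvature of $\{f=0\}$ can degenerate as $\theta$ approaches $\partial C$ (which corresponds to $\sigma$ approaching $\partial K$), so the Gaussian width and the error bounds are not uniform up to the boundary; one must either restrict to compact interior sets and then take a separate limit toward $\partial K$, or find a more clever scaling. A secondary obstacle is that ``$\alpha\gg 0$'' in the conjecture might need to depend on the polynomial $f$ in a way that the argument does not make explicit, but since the conjecture only asks for some threshold this is acceptable. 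A cleaner but more ambitious alternative would be to prove nonnegativity for \emph{all} $\alpha>d$ by writing $q_\alpha$ as an explicit convolution power: for complete hyperbolic $f$ of degree $p$, one has $q_{\alpha+\beta}=q_\alpha * q_\beta$ by multiplicativity of the Laplace transform, so it would suffice to establish $q_\alpha\geq 0$ for $\alpha$ in one period interval, say $d<\alpha\leq d+1$, and then bootstrap; I would try this route in parallel, though I suspect the base case is no easier than the original problem except for special $f$ such as products of linear forms (Example~\ref{ex_prodlinform}) or determinants (Example~\ref{ex_Gaussian_hyp}), where $q_\alpha$ can be written down explicitly and is visibly nonnegative.
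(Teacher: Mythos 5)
There is no proof of this statement in the paper to compare against: it is stated as Conjecture~\ref{conj_Riesz}, and the authors explicitly say they are not aware of any proof of the nonnegativity of the Riesz kernel. Your proposal is therefore an attack on an open problem, and as written it does not close it. The decisive gap is the one you half-identify in step (4), but it is more than a technicality about error bounds: a steepest-descent analysis at the saddle point $\theta(\sigma,\alpha)$ solving $-\alpha\nabla\log f(\theta)=\sigma$, even if carried out rigorously, yields only that for every compact $S\subset{\rm int}(K)$ there is a threshold $\alpha_0(S)$ with $q_\alpha>0$ on $S$ for $\alpha>\alpha_0(S)$. The conjecture demands the opposite quantifier order: one $\alpha$ that works on all of $K$. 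Nothing in your outline excludes the scenario in which, for every $\alpha$, the kernel is negative on a thin layer near $\partial K$ that shrinks but never disappears as $\alpha\to\infty$. A simplification you miss is that $q_\alpha$ is homogeneous of degree $p\alpha-d$ in $\sigma$ (where $p=\deg f$), so its sign depends only on the ray and it suffices to check a compact cross-section $K\cap\{\langle\tau,\sigma\rangle=1\}$ with $\tau\in C$; but that cross-section still meets $\partial K$, and precisely there the saddle point escapes to $\partial C$, the Hessian of $\log f$ degenerates, and the Gaussian main term no longer dominates the corrections. So the boundary is not a ``separate limiting argument'' to be appended in step (5) --- it is the entire difficulty, and an appeal to semicontinuity says nothing, since a pointwise limit of functions each negative somewhere can be nonnegative.

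The parts of your plan that are sound are worth recording. For $\theta\in C$ one has $|f(\theta+i\eta)|=f(\theta)\prod_j(1+\lambda_j(\eta)^2)^{1/2}\ge f(\theta)$, with equality only at $\eta=0$ when $f$ is complete, so the modulus of the integrand in (\ref{eq:intrep}) does peak at $\eta=0$; and ${\rm Hess}\,\log f$ is negative definite on $C$ because the gradient map is a diffeomorphism onto $-{\rm int}(K)$ (Remark~\ref{rem_Riesz}), so the leading Gaussian term is indeed positive and your steps (1)--(3) can be made rigorous pointwise on ${\rm int}(K)$. The convolution identity $q_{\alpha+\beta}=q_\alpha * q_\beta$ is also correct and would bootstrap nonnegativity from a single interval (for instance, $q_\alpha\ge 0$ for $\alpha\in(d,d+1]$ yields $q_\alpha\ge 0$ for all $\alpha\ge d(d+1)$ by writing $\alpha$ as a sum of $k$ equal summands in that interval), but as you concede the base case is no easier than the original problem except for the determinantal and product-of-linear-forms cases already settled by Propositions~\ref{prop_Wishart} and~\ref{prop_Diagonal}. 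Neither route, as outlined, proves the conjecture.
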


In the following we slightly modify the terminology
relative to what is standard in statistics. Namely, we
  define hyperbolic exponential families with respect to a possibly signed measure:

\begin{definition}[({\em Hyperbolic exponential families})] \rm
Let $f \in \R[\theta_1,\ldots,\theta_d]$ be a complete hyperbolic polynomial with hyperbolicity cone $\,C \subset \R^d$. The corresponding  {\em hyperbolic exponential family} $(\mathcal{X},\nu,T)$ is a rational exponential family with canonical parameter space $C$, sample space $\cX = C^\vee$, sufficient statistics $T(x)=x$, and $\nu$ given by the Riesz kernel~(\ref{eq:intrep}). 
\end{definition}


\begin{remark}\label{rem_Riesz}
It is known from optimization theory \cite{Renegar} that the bijection between $C$ and the interior of $K= C^{\vee}$ described in Theorem~\ref{thm:brown} holds for complete hyperbolic 
polynomials $f$ with hyperbolicity cone $C \subset \R^d$. 
Namely, the gradient function $F(\theta)=-\log f(\theta)$ defines a bijection
 between $C$ and the interior of $K=C^{\vee}$. This follows from the fact that 
 $F(\theta)$ is a self-concordant barrier function of $C$ and does not require 
 non-negativity of the Riesz kernel; see \cite[Section 6]{guler2} for details. 
 This justifies our slight misuse of vocabulary in the above definition.
\end{remark}

Our introduction of hyperbolic exponential families 
appears to be a novel contribution to statistics. In order for 
these to become  useful for  data analysis,
it is essential to gain a better understanding of
the Riesz kernel. In general, it is a hard problem
to compute  $q_{\alpha}(\sigma)$ in an explicit form.
In the remainder of this section, we discuss some
concrete examples, starting with the Gaussian family in Example~\ref{ex_Gaussian_hyp}.
 These are situations where the
hyperbolic polynomial $f$ has a symmetric determinantal representation. In this case, the kernel $q_\alpha(\sigma)$ is closely related
to the {\em Wishart distribution} on the positive
definite cone~${\rm PD}_m$, i.e.~the sampling distribution for covariance matrices in the
Gaussian family (see the proof of Proposition~\ref{prop_Wishart} below). We refer to the article of Scott and Sokal \cite{sokal} for further examples,
many results, and open problems.

\begin{proposition}
\label{prop_Wishart}
Let $f$ be the determinant of a symmetric $m \times m$-matrix $\theta$ and
$C = {\rm PD}_m$ the cone of positive definite matrices.
For $\alpha>\frac{m-1}{2}$, the corresponding Riesz kernel equals
\begin{equation}
\label{eq:qGamma}
q_{\alpha}(\sigma)  \,\,= \,\,\frac{1}{\Gamma_{m}(\alpha)}\,\det(\sigma)^{\alpha-\frac{m+1}{2}}.
\end{equation}
\end{proposition}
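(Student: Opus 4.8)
The plan is to compute the Riesz kernel $q_\alpha(\sigma)$ directly from the integral representation \eqref{eq:intrep2} rather than from \eqref{eq:intrep}, by recognizing the right-hand side as a known Laplace-transform identity for the determinant. Concretely, I would verify that the claimed formula satisfies \eqref{eq:intrep2}, i.e.\ that for $\theta \in {\rm PD}_m$,
\[
\det(\theta)^{-\alpha} \;=\; \frac{1}{\Gamma_m(\alpha)}\int_{{\rm PD}_m} \exp(-\langle \theta,\sigma\rangle)\,\det(\sigma)^{\alpha-\frac{m+1}{2}}\,\dd\sigma,
\]
where $\langle\theta,\sigma\rangle = {\rm tr}(\theta\sigma)$ and $\Gamma_m(\alpha) = \pi^{m(m-1)/4}\prod_{j=0}^{m-1}\Gamma\!\left(\alpha - \tfrac{j}{2}\right)$ is the multivariate Gamma function. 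Since Theorem~\ref{th:garding} asserts that $q_\alpha$ is the \emph{unique} (up to measure zero) function supported on $K = {\rm PD}_m^\vee = \overline{{\rm PD}_m}$ satisfying \eqref{eq:intrep2}, establishing this identity suffices; there is no need to touch the oscillatory integral \eqref{eq:intrep} at all.

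First I would reduce to the standard normalization: substitute $\sigma = \theta^{-1/2}\rho\,\theta^{-1/2}$ (using the positive-definite square root), so that ${\rm tr}(\theta\sigma) = {\rm tr}(\rho)$. The Jacobian of the congruence transformation $\sigma \mapsto \theta^{1/2}\sigma\theta^{1/2}$ on $\S^m$ is $\det(\theta)^{(m+1)/2}$, and $\det(\sigma) = \det(\theta)^{-1}\det(\rho)$; collecting powers of $\det(\theta)$ turns the right-hand side into $\det(\theta)^{-\alpha}$ times
\[
\frac{1}{\Gamma_m(\alpha)}\int_{{\rm PD}_m}\exp(-{\rm tr}(\rho))\,\det(\rho)^{\alpha-\frac{m+1}{2}}\,\dd\rho.
\]
So the whole statement collapses to the single classical fact that this last integral equals $1$, i.e.\ that $\Gamma_m(\alpha) = \int_{{\rm PD}_m}\exp(-{\rm tr}(\rho))\det(\rho)^{\alpha-(m+1)/2}\,\dd\rho$. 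This is the defining integral of the Siegel/Ingham multivariate Gamma function, valid precisely for ${\rm Re}(\alpha) > \frac{m-1}{2}$, which matches the hypothesis. One can either cite it (e.g.\ from Muirhead's book on multivariate statistics) or prove it by an iterated reduction: write $\rho$ in block form and integrate out the last row/column, or triangulate via the Cholesky/Bartlett decomposition $\rho = LL^T$ with $L$ lower-triangular with positive diagonal, under which the integral factors into $m$ one-dimensional Gamma integrals and a Gaussian integral over the off-diagonal entries, reproducing exactly the product $\pi^{m(m-1)/4}\prod_j\Gamma(\alpha - j/2)$.

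Finally, I would check the support and convergence conditions: the integrand is supported on $\overline{{\rm PD}_m}$ by construction, $\det(\sigma)^{\alpha - (m+1)/2}$ is locally integrable near the boundary $\partial {\rm PD}_m$ exactly when $\alpha > \frac{m-1}{2}$ (this is the content of the Gindikin condition and is what makes $\Gamma_m(\alpha)$ finite), and the $\exp(-{\rm tr}(\theta\sigma))$ factor gives decay at infinity since $\theta \succ 0$. The main obstacle is not any of these steps individually — each is routine — but rather making the reduction clean: one must be careful that the Lebesgue measure $\dd\sigma$ in the paper's convention on $\S^m \cong \R^d$ with $d = \binom{m+1}{2}$ is the one for which the congruence Jacobian is $\det(\theta)^{(m+1)/2}$ (as opposed to $\det(\theta)^{(m+1)/2}$ times a power of $2$ coming from whether off-diagonal entries are counted once or twice), and that the inner product $\langle A,B\rangle = {\rm tr}(AB)$ fixed earlier is consistent with the positive-definite matrix $S$ appearing in the general setup. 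Since the final formula \eqref{eq:qGamma} only claims equality as a density (up to measure zero) and the normalizing constant is pinned down by \eqref{eq:intrep2}, any such constant ambiguity is automatically absorbed into $\Gamma_m(\alpha)$, so the proof goes through regardless; but I would state the convention explicitly to keep the constant honest. A closing remark would note that this example shows $\alpha > d = \binom{m+1}{2}$ from Theorem~\ref{th:garding} is far from necessary — here $\alpha > \frac{m-1}{2}$ already suffices — which is precisely the point flagged after that theorem.
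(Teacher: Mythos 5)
Your proposal is correct and follows essentially the same route as the paper: both establish the Laplace-transform identity $\det(\theta)^{-\alpha}=\frac{1}{\Gamma_m(\alpha)}\int_{{\rm PD}_m}\det(\sigma)^{\alpha-\frac{m+1}{2}}\exp(-\langle\theta,\sigma\rangle)\,\dd\sigma$ and then conclude by the uniqueness of the integral representation \eqref{eq:intrep2}, never touching the oscillatory integral \eqref{eq:intrep}. The only cosmetic difference is that the paper obtains this identity by quoting that the Wishart density integrates to $1$, whereas you derive it by the congruence substitution $\sigma=\theta^{-1/2}\rho\,\theta^{-1/2}$ reducing to the Siegel integral defining $\Gamma_m(\alpha)$ --- which is exactly how the Wishart normalizing constant is computed in the first place.
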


The denominator $\Gamma_{m}\left(\alpha\right)$ in equation (\ref{eq:qGamma})
denotes the {\em multivariate gamma function}
$$
\Gamma_{m}\left(\alpha\right)\,\,:=\,\,\,\pi^{m(m-1)/4} \cdot \prod_{j=1}^{m}\,\Gamma\left(\alpha-\frac{1-j}{2}\right).
$$

\begin{proof}
We compute the Riesz kernel from the Wishart distribution.
This is a probability distribution over the positive definite cone ${\rm PD}_m$ with two parameters: the degrees of freedom $n\in (m-1,\infty)$ and a scale matrix $\Lambda\in {\rm PD}_m$. The density function of the Wishart distribution~is
\begin{equation}\label{eq:wishart}
\frac{1}{2^{mn/2}\,\det(\Lambda)^{n/2}\,\Gamma_{m}(n/2)}\det(\sigma)^{\frac{n-m-1}{2}}\exp(-\frac{1}{2}{\rm tr}(\Lambda^{-1}\sigma)).
\end{equation}

We replace the parameters $n$ and $\Lambda$ with
the new parameters $\alpha=n/2$ and $\theta=(2\Lambda)^{-1}$,
and we use the fact that the integral  of (\ref{eq:wishart}) over $\sigma \in {\rm PD}_m$ is equal to $1$.
This implies
\begin{equation}\label{eq:aux_riesz}
\det(\theta)^{-\alpha} \,\,=\, \, \,
\frac{1}{\Gamma_{m}(\alpha)} \int_{{\rm PD}_m} \!\!
\det(\sigma)^{\alpha-\frac{m+1}{2}}\exp(-\langle\theta, \sigma\rangle)\,\dd \sigma
\qquad \hbox{for $\alpha>\frac{m-1}{2}$},
\end{equation}
 where $\langle\,\cdot\,,\,\cdot\,\rangle$ denotes the trace inner product
on symmetric matrices.
 By comparing this expression with (\ref{eq:intrep2}) and by using the uniqueness of this integral representation, we see that the
Riesz kernel of $f = {\rm det}(\theta)$ for $\alpha>\frac{m-1}{2}$ is given by
the right hand side of (\ref{eq:qGamma}).
\end{proof}

Using an analog of Proposition~\ref{prop_Wishart} for Hermitian matrices, one can compute the Riesz
kernel of the elementary symmetric polynomial $E_2(\theta)$.
This was carried out in \cite[Corollary 5.8]{sokal}. We here go over this for the
smallest instance. This is also featured in \cite[Proposition~3.8]{sokal}.

\begin{example}
\label{ex:RieszE2}
 \rm
Consider $f = \theta_1\theta_2 +\theta_1\theta_3+\theta_2\theta_3$ with hyperbolicity
cone $C \subset \R^3$. Note that
$$ f(\theta) \,=\, \det \begin{pmatrix}\theta_1 +\theta_3 & \theta_3 \\ \theta_3 & \theta_2+\theta_3
 \end{pmatrix}. $$
By Proposition~\ref{prop_Wishart}, the following integral representation
is valid for $\alpha>1/2$:
$$ f(\theta)^{-\alpha}\,=\,
\frac{1}{\Gamma_{2}(\alpha)} \int_{{\rm PD}_2} \exp\big(\! -(\theta_{1}+\theta_3)\sigma_{11}
- (\theta_2+\theta_3)\sigma_{22} - 2\theta_3\sigma_{12}\big)\,(\sigma_{11}\sigma_{22}-\sigma_{12}^2)^{\alpha-3/2}\,\dd \sigma .$$
 By setting $\sigma_1 = \sigma_{11}$, $\sigma_2 = \sigma_{22}$, and $\sigma_3 = \sigma_{11} +\sigma_{22}+2\sigma_{12}$, this transforms into an integral  over the dual
 $K = C^\vee$ to the hyperbolicity cone. Namely, we get the integral representation
$$f(\theta)^{-\alpha} = \frac{2^{2-2\alpha}}{\Gamma_{2}(\alpha)} \int_{K} \!\! 
\exp\big({-}\theta_{1}\sigma_1 {-}\theta_2\sigma_{2} {-} \theta_3\sigma_3\big)\,\big(2(\sigma_1\sigma_2{+}\sigma_1\sigma_3{+}\sigma_2\sigma_3) 
-(\sigma_1^2{+}\sigma_2^2{+}\sigma_3^2)\big)^{\alpha-3/2}\,\dd \sigma.$$
By comparing this expression with (\ref{eq:intrep2}), we see that the
Riesz kernel of $f $ is equal to
$$ q_\alpha(\sigma) \,\, = \,\,\,
\frac{2^{2-2\alpha}}{\Gamma_{2}(\alpha)} \;\big(2(\sigma_1\sigma_2+\sigma_1\sigma_3+\sigma_2\sigma_3) -(\sigma_1^2+\sigma_2^2+\sigma_3^2)\big)^{\alpha-3/2}.$$
Note that the polynomial in the parenthesis vanishes on the boundary of $K$;
it is the quadric dual to $f$.
See \cite[equation (4.16)]{sokal} for the analogous formula for $E_2(\theta)$
in four variables.
\hfill $\diamondsuit$
\end{example}

We next examine $f(\theta) = \theta_1 \theta_2 \cdots \theta_m$.
In the context of Proposition~\ref{prop_Wishart},
this hyperbolic polynomial is  the determinant of a diagonal matrix.
Its hyperbolicity cone is the positive orthant $C =  \mathbb{R}^m_{>0}$.
This cone is self-dual, so that $K = \mathbb{R}^m_{\geq 0}$. We consider the
case $\alpha = 1/2$.

\begin{proposition}
\label{prop_Diagonal}
The Riesz kernel for determinants of $m \times m$ diagonal matrices equals
\begin{equation}
\label{eq:diagonalriesz}
q_{1/2}(\sigma) \,\,=\,\, \pi^{-m/2}\,(\sigma_1 \sigma_2 \cdots \sigma_m)^{-1/2}.
\end{equation}
\end{proposition}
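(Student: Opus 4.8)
The plan is to reduce the statement to the one-variable case and then invoke the classical Riesz/gamma integral. First I would note that for the diagonal polynomial $f(\theta) = \theta_1\theta_2\cdots\theta_m$, both the canonical parameter cone $C = \R^m_{>0}$ and the sample cone $K = C^\vee = \R^m_{\geq 0}$ split as products of copies of $\R_{>0}$ and $\R_{\geq 0}$, and the pairing $\langle\theta,\sigma\rangle = \sum_{j} \theta_j\sigma_j$ is the sum of the one-dimensional pairings. Hence the defining integral representation (\ref{eq:intrep2}) factors: $f(\theta)^{-\alpha} = \prod_{j=1}^m \theta_j^{-\alpha}$, and we only need to know that $\theta^{-\alpha} = \int_0^\infty e^{-\theta\sigma} q_\alpha^{(1)}(\sigma)\,\dd\sigma$ for some kernel $q_\alpha^{(1)}$ on $\R_{\geq 0}$. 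By the uniqueness (up to measure zero) of the kernel asserted after Theorem~\ref{th:garding}, the multivariate Riesz kernel is then forced to be the product $q_\alpha(\sigma) = \prod_{j=1}^m q_\alpha^{(1)}(\sigma_j)$.

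For the one-variable kernel I would use the standard gamma-function identity: substituting $t = \theta\sigma$ in $\Gamma(\alpha) = \int_0^\infty t^{\alpha-1}e^{-t}\,\dd t$ gives $\theta^{-\alpha} = \frac{1}{\Gamma(\alpha)}\int_0^\infty \sigma^{\alpha-1} e^{-\theta\sigma}\,\dd\sigma$, valid for all $\theta > 0$ and $\alpha > 0$. Thus $q_\alpha^{(1)}(\sigma) = \frac{1}{\Gamma(\alpha)}\sigma^{\alpha-1}$ on $\R_{\geq 0}$. Specializing to $\alpha = 1/2$ and using $\Gamma(1/2) = \sqrt{\pi}$ yields $q_{1/2}^{(1)}(\sigma) = \pi^{-1/2}\sigma^{-1/2}$. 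Taking the $m$-fold product gives exactly $q_{1/2}(\sigma) = \pi^{-m/2}(\sigma_1\sigma_2\cdots\sigma_m)^{-1/2}$, which is (\ref{eq:diagonalriesz}). Alternatively, this is the $m$-fold product of the $1\times1$ Wishart (chi-squared) case of Proposition~\ref{prop_Wishart} with $m = 1$, $\alpha = 1/2$: there $q_\alpha(\sigma) = \Gamma_1(\alpha)^{-1}\sigma^{\alpha-1}$ with $\Gamma_1(\alpha) = \Gamma(\alpha)$, so one may cite Proposition~\ref{prop_Wishart} directly in each coordinate.

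The only subtlety — and the step I expect to require a word of care — is the convergence/well-definedness issue: Theorem~\ref{th:garding} only guarantees the kernel is an honest function when $\alpha > d = m$, whereas here $\alpha = 1/2$. So one cannot blindly apply the Fourier-integral formula (\ref{eq:intrep}); instead one should argue that the Laplace-transform characterization (\ref{eq:intrep2}) still uniquely pins down $q_\alpha$ as a locally integrable function on $K$, and that the product formula exhibited above is such a function (note $\sigma_j^{-1/2}$ is locally integrable on $\R_{\geq 0}$, and the factored integral converges absolutely for every $\theta \in \R^m_{>0}$). This is the same ``$\alpha > d$ is sufficient but not necessary'' phenomenon flagged in the proof of Theorem~\ref{th:garding} and in the remark preceding Proposition~\ref{prop_Wishart}, so I would simply point to the injectivity of the Laplace transform on measures supported in a fixed pointed cone to close the argument.
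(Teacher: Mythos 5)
Your proof is correct and follows essentially the same route as the paper: both exhibit $\pi^{-m/2}(\sigma_1\cdots\sigma_m)^{-1/2}$ as the density in an explicit Laplace-transform representation of $\det(\theta)^{-1/2}$ over $\R^m_{\geq 0}$ and conclude by comparison with (\ref{eq:intrep2}) and uniqueness of that representation. The only cosmetic difference is that you compute the one-dimensional integral via the gamma identity $\Gamma(1/2)=\sqrt{\pi}$ and take an $m$-fold product, whereas the paper evaluates the $m$-dimensional Gaussian integral directly and substitutes $\sigma_i = x_i^2/2$ --- the two computations are related by exactly that substitution.
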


\begin{proof}
 Let $\theta$ denote the diagonal matrix $\textrm{diag}(\theta_1,\dots ,\theta_m)$.
 We use the fact that the  Gaussian density on $\R^m$ with mean ${\bf 0}$
 and inverse covariance matrix $\theta$ integrates to $1$.
 This implies
$$
\det(\theta)^{-1/2}
\,\,\,=\,\,\,\frac{1}{(2\pi)^{m/2}} \int_{\mathbb{R}^m} \exp\Big(-\frac{1}{2}\, x^T \theta \!\;x\Big)\,\dd x
\,\,\,=\,\,\,
(2/\pi)^{m/2} \int_{\mathbb{R}^m_{\geq 0}} \! \exp\Big(-\frac{1}{2}\,\sum_{i=1}^m \theta_i\!\; x_i^2\Big)\,\dd x.
$$
By making the change of variables $\frac{1}{2}x_i^2\to \sigma_i$, we obtain
\begin{equation}
\label{eq_Gaussian_diag}
\det(\theta)^{-1/2} \,\,=\,\,
\pi^{-m/2} \int_{\mathbb{R}^m_{\geq 0}} \!\!
  \exp\Big(-\sum_{i=1}^m \theta_i \sigma_i\Big)\,(\sigma_1\cdots \sigma_m)^{-1/2}\, \dd \sigma.
\end{equation}
By comparing this expression with (\ref{eq:intrep2}), we see that the
Riesz kernel equals (\ref{eq:diagonalriesz}).
\end{proof}

In principle,  the computation of Riesz kernels as in
Propositions~\ref{prop_Wishart} and \ref{prop_Diagonal}
can be restricted to  cases when the parameters satisfy some linear constraints.
The resulting hyperbolic polynomials $f(\theta)$  admit a
symmetric determinantal representation.
This requires integrating out the complementary parameters,
a task that is very difficult to do. Even the case of products of linear forms
(Example \ref {ex_prodlinform}) is challenging, as the following example shows.

\begin{example}
 \label{ex_diag_Gaussian} \rm
Let $f(\theta)=\theta_{1}\theta_{2}(\theta_{1}+\theta_{2})(\theta_{1}-\theta_{2})$,
and take the hyperbolicity cone to be $C=\{\theta_{1}>\theta_{2}>0\}$. Then its dual is $K=\{ \sigma_{1}\geq 0,\sigma_{1}+\sigma_{2}\geq 0\}$. By applying (\ref{eq_Gaussian_diag}) we obtain
$$ \det(\theta)^{-1/2} = \pi^{-2} \int_{\mathbb{R}^4_{\geq 0}} \exp\big(-\theta_1 y_1 -\theta_2y_2 - (\theta_1+\theta_2) y_3 - (\theta_1-\theta_2) y_4\big)\,\big(y_1y_2y_3y_4\big)^{-1/2}\, \dd y.$$
By replacing $\sigma_1=y_1+y_3+y_4$ and $\sigma_2=y_2+y_3-y_4$, and defining
$$K' = \Big\{(y_3,y_4)\in\mathbb{R}^2_{\geq 0} \;\mid \; y_3 + y_4\leq \sigma_1
\,\,\hbox{and}\,\, y_3-y_4\leq \sigma_2 \Big\},$$ we find
$$ \det(\theta)^{-1/2} \,= \, \pi^{-2}\!\!
 \int_{K} \exp\big(-\theta_1 \sigma_1 -\theta_2\sigma_2\big)\!\!\left(\int_{K'}\!\big((\sigma_1-y_3-y_4)(\sigma_2-y_3+y_4)y_3y_4\big)^{-1/2}\, \dd y \right)  \dd \sigma .$$
 Hence the Riesz kernel is
$$q_{1/2}(\sigma)  \,= \,  \pi^{-2} \int_{K'}\!\big((\sigma_1-y_3-y_4)(\sigma_2-y_3+y_4)y_3y_4\big)^{-1/2}\, \dd y.$$
 The integral over $K'$ can be expressed in terms of generalized hypergeometric functions. Performing this computation requires a case distinction, and we here consider the case when $\sigma_2\geq \sigma_1$. Then we may substitute $y_3\to \sigma_1 v$ and $y_4\to \sigma_1 u$ and obtain
 \begin{align*}
\int_{K'}\!\big((\sigma_1-y_3-y_4)&(\sigma_2-y_3+y_4)y_3y_4\big)^{-1/2}\, \dd y  \; 
\\
 & =\, \sigma_1^{\frac{1}{2}}\sigma_2^{-\frac{1}{2}}\int_{0}^1\!\!\int_0^{1-v} \!\!\!\! u^{-1/2} v^{-1/2} \big(1-u-v\big)^{-1/2} \Big(1+\frac{\sigma_1}{\sigma_2}u -\frac{\sigma_1}{\sigma_2}v\Big)^{-1/2}\, \dd u \, \dd v\\
 & = \,2\pi\,\sigma_1^{\frac{1}{2}}\sigma_2^{-\frac{1}{2}} \;F_1\Big(\frac{1}{2};\, \frac{1}{2}, \frac{1}{2};\, \frac{3}{2};\, -\frac{\sigma_1}{\sigma_2}, \frac{\sigma_1}{\sigma_2}\Big),
 \end{align*}
where $F_1$ is the first {\em Appell hypergeometric function} in two unknowns.
Here we are using
formula (6) in {\tt http://mathworld.wolfram.com/AppellHypergeometricFunction.html}. \\
A similar analysis can be carried out for the other case  $\sigma_1> \sigma_2$,
but this is omitted here.
\hfill $\diamondsuit$
\end{example}

Suppose now that $f(\theta)$ is an arbitrary product of linear forms
as in Example \ref {ex_prodlinform}, and we wish to
find integral representations of $f(\theta)^{-\alpha}$.
The Riesz kernel $q_\alpha(\sigma)$ is expressible
in terms of Aomoto-Gel'fand hypergeometric functions~\cite{Aomoto,Vasilev}.
The corresponding hypergeometric D-modules
should be useful for representing Riesz kernels.
More generally, it would be interesting to
develop the {\em holonomic gradient method}
\cite{HNTT} for  hyperbolic exponential families.

\section{Restricting to Linear Subspaces}
\label{sec:lin_sub}

Many important statistical models arise by
restricting a given exponential family $(\cX,\nu,T)$ to
a linear subspace $\mathcal{L} \subset \R^d$ of the canonical parameters.
  Figure~\ref{fig:yellowgreen} is meant to illustrate this.
We shall assume $C\cap \mathcal L\neq \emptyset$ and  $\dim \mathcal L=c$. Fix a basis of $\mathcal{L}$ and let $L\in\mathbb{R}^{c\times d}$ denote a matrix whose rows are the elements of that basis.
The map $\pi_\mathcal{L} : \R^d \rightarrow \R^d/\mathcal{L}^\perp \simeq \R^c$
is given in coordinates by the matrix $L$. In what follows we identify
$L=\pi_\mathcal{L}$. Later on, we often abuse notation and we write
$\mathcal{L}$ also for the projectivization of the
linear subspace $\mathcal{L} \subset \R^d$.
In that context, $\mathcal{L}$ will be a
plane of dimension $c-1$ in $\R \PP^{d-1}$ or $\C \PP^{d-1}$. Note that all results in this section hold when $\nu$ is a signed measure and hence apply in particular to hyperbolic exponential families.

For every $\theta\in C\cap {\mathcal L}$ there exists a
vector $\tau\in\mathbb{R}^c$ such that
$\theta = L^T \tau$, and hence
\begin{equation}\label{eq:projNconstr}
\langle\theta, T(x)\rangle=\langle L^{T}\tau, T(x)\rangle = \langle \tau, (L\circ T)(x)\rangle.
\end{equation}
By restricting $C$ to $C\cap \mathcal L$ we obtain an exponential family $(\cX,\nu,T_{\mathcal L})$ with sample space $\mathcal{X}$, canonical parameter space $C_{\mathcal{L}} = C\cap \mathcal{L}$, sufficient statistics $T_{\mathcal L}(x) =
(L\circ T)(x)$, space of sufficient statistics $K_{\mathcal{L}}={\rm conv}(T_{\mathcal{L}}(\mathcal{X}))$, log-partition function $A_\mathcal{L}$, and gradient map $F_{\mathcal{L}}$.

\begin{lemma} The exponential family $(\cX,\nu,T_{\mathcal L})$ is invariant under the choice
of  basis for~$\mathcal{L}$.
\end{lemma}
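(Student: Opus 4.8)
The plan is to show that a change of basis for $\mathcal{L}$ induces an invertible linear coordinate change on the parameter $\tau \in \R^c$, which is precisely a reparametrization of the exponential family, and hence produces the same family in the sense that all the intrinsic data---the model as a set of probability distributions on $(\cX,\nu,T)$, together with the convex sets $C_{\mathcal{L}}$, $K_{\mathcal{L}}$ and the gradient map $F_{\mathcal{L}}$ up to the corresponding linear identifications---agree. Concretely, suppose $L, L' \in \R^{c \times d}$ are two matrices whose rows are bases of the same subspace $\mathcal{L}$. Then there is an invertible matrix $M \in GL_c(\R)$ with $L' = M L$. I would first record this elementary linear-algebra fact, then trace its effect through the construction of $(\cX,\nu,T_{\mathcal{L}})$.

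First I would compute the transformed sufficient statistics: $T_{\mathcal{L}}'(x) = (L' \circ T)(x) = M \cdot (L \circ T)(x) = M \cdot T_{\mathcal{L}}(x)$. Next, using (\ref{eq:projNconstr}), for $\theta \in C \cap \mathcal{L}$ written as $\theta = (L')^T \tau' = L^T M^T \tau'$, one sees that the pairing $\langle \tau', T_{\mathcal{L}}'(x) \rangle_{(c)}$ matches $\langle \tau, T_{\mathcal{L}}(x)\rangle_{(c)}$ under the correspondence $\tau = M^T \tau'$, provided we keep track of the inner product on $\R^c$: the inner product $\langle \cdot,\cdot\rangle$ inherited on the quotient $\R^d/\mathcal{L}^\perp \simeq \R^c$ via $L$ versus via $L'$ differ exactly by the change of basis $M$, so the bilinear forms are compatible. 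Consequently the density $p_{\tau'}(x) = \exp(-\langle \tau', T_{\mathcal{L}}'(x)\rangle - A_{\mathcal{L}}'(\tau'))$ equals $p_{\tau}(x)$ with $\tau = M^T\tau'$, so $A_{\mathcal{L}}'(\tau') = A_{\mathcal{L}}(M^T \tau')$, and the two families parametrize the same set of distributions. The spaces $C_{\mathcal{L}}' = \{\tau' : M^T\tau' \in C_{\mathcal{L}}\} = (M^T)^{-1} C_{\mathcal{L}}$ and $K_{\mathcal{L}}' = \mathrm{conv}(T_{\mathcal{L}}'(\cX)) = M \cdot K_{\mathcal{L}}$ are the images of $C_{\mathcal{L}}, K_{\mathcal{L}}$ under the evident linear isomorphisms, and by the chain rule the gradient map satisfies $F_{\mathcal{L}}' = M \circ F_{\mathcal{L}} \circ M^T$ on these domains. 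Thus everything transforms covariantly, which is the precise meaning of "invariant under the choice of basis."

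The only mild subtlety---and the step I would be most careful about---is bookkeeping with the inner product on $\R^c$. The statement before the lemma fixes an inner product $\langle\cdot,\cdot\rangle$ on $\R^d$, and the identification $\R^d/\mathcal{L}^\perp \simeq \R^c$ depends on the chosen basis, so strictly the inner product induced on $\R^c$ changes with the basis; the right statement is that the triple (parameter space, inner product, sufficient statistics) transforms by $M$, not that the coordinates literally coincide. I would state the lemma's conclusion in this invariant form: the map sending $\tau \mapsto M^T\tau$ (equivalently $\sigma \mapsto M\sigma$ on the statistics side) is an isomorphism of exponential families, intertwining $C_{\mathcal{L}}$ with $C_{\mathcal{L}}'$, $K_{\mathcal{L}}$ with $K_{\mathcal{L}}'$, and $F_{\mathcal{L}}$ with $F_{\mathcal{L}}'$. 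Everything else is a direct substitution, so no serious obstacle arises beyond this notational care; the lemma is essentially the observation that restricting to $\mathcal{L}$ is a coordinate-free operation and only the choice of \emph{coordinates} on $\mathcal{L}$, not the family itself, depends on the basis.
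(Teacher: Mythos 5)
Your proposal is correct and its core step is exactly the paper's argument: writing the second basis matrix as an invertible multiple of the first and observing that the reparametrization $\tau \mapsto M^{T}\tau'$ makes the pairing $\langle L^{T}\tau, T(x)\rangle = \langle \tau,(L\circ T)(x)\rangle$ manifestly independent of the basis. The paper's proof stops at this identity of pairings (hence of densities), while you additionally trace the covariant transformation of $C_{\mathcal{L}}$, $K_{\mathcal{L}}$, $A_{\mathcal{L}}$ and $F_{\mathcal{L}}$, which is a harmless elaboration of the same idea.
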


\begin{proof} Let $M\in\mathbb{R}^{c\times d}$ denote another basis for $\mathcal L$. There exists an invertible matrix $U\in \R^{c\times c}$ such that $L=UM$. Let $\eta=U^{T}\tau$ be another coordinate system on $\mathcal L$. Then $L^{T}\tau=M^{T}\eta$ and, by (\ref{eq:projNconstr}), for every $\theta\in C\cap \mathcal L$ we find $\langle \tau,
(L \circ T)(x)\rangle = \langle \theta, T(x)\rangle = \langle \eta, (M \circ T)(x)\rangle.$
\end{proof}

In what follows we fix the basis consisting of the rows of $L$,
and we identify $\R^c$ with $\mathcal{L}$ via $\tau \mapsto L^T \tau$.
This means that the convex set $C_\mathcal{L} = C \cap \mathcal{L}$ lives in $\R^c$
with coordinates $\tau$.

\begin{lemma}The gradient map of the restricted exponential family $(\cX,\nu,T_{\mathcal L})$ is given by
\begin{equation}\label{eq:FLtheta}
F_{\mathcal L}\;=\; L\circ F\circ L^{T}.
\end{equation}
\end{lemma}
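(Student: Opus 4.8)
The plan is to compute the log-partition function of the restricted family directly from its defining integral, differentiate, and read off the gradient map. First I would write down
$$
A_{\mathcal L}(\tau)\;=\;\log\int_{\cX}\exp\bigl(-\langle\tau,T_{\mathcal L}(x)\rangle\bigr)\,\nu(\dd x)\;=\;\log\int_{\cX}\exp\bigl(-\langle\tau,(L\circ T)(x)\rangle\bigr)\,\nu(\dd x).
$$
Using the identity $\langle\tau,(L\circ T)(x)\rangle=\langle L^{T}\tau,T(x)\rangle$ from (\ref{eq:projNconstr}), the integrand matches that of the original family at the parameter $\theta=L^{T}\tau$, so $A_{\mathcal L}(\tau)=A(L^{T}\tau)$ for all $\tau$ with $L^{T}\tau\in C$, i.e. for all $\tau\in C_{\mathcal L}$ under our identification. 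This is the one substantive observation; everything else is the chain rule.

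Next I would differentiate. For $\eta\in\R^{c}$ we have, by the chain rule, $(DA_{\mathcal L}(\tau))(\eta)=(DA(L^{T}\tau))(L^{T}\eta)$. By the definition of the gradient map $F$, the right-hand side equals $\langle L^{T}\eta,-F(L^{T}\tau)\rangle$, where the pairing is the chosen inner product on $\R^{d}$. I must be slightly careful here about which inner products are in play: the pairing on $\R^{c}$ used to define $F_{\mathcal L}$ is the one induced on $\mathcal L\simeq\R^{c}$ by the identification $\tau\mapsto L^{T}\tau$, so that $\langle L^{T}\eta,-F(L^{T}\tau)\rangle_{\R^{d}}=\langle\eta,-L\,F(L^{T}\tau)\rangle_{\R^{c}}$ is exactly the statement that the dual vector, expressed in $\tau$-coordinates, is $-L\,F(L^{T}\tau)$. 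Comparing with the defining relation $(DA_{\mathcal L}(\tau))(\eta)=\langle\eta,-F_{\mathcal L}(\tau)\rangle$ gives $F_{\mathcal L}(\tau)=L\,F(L^{T}\tau)=(L\circ F\circ L^{T})(\tau)$, which is (\ref{eq:FLtheta}).

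I expect the only real obstacle to be bookkeeping of the two inner products and the dualization: writing $\pi_{\mathcal L}=L$ and its transpose $L^{T}$ consistently, and checking that the induced pairing on $\R^{c}$ is the one that makes the composition formula come out without extra factors. (If one instead fixed an unrelated inner product on $\R^{c}$, a Gram matrix would appear; the clean statement relies on using the induced pairing, as set up just before the lemma.) Once that is pinned down, the argument is a two-line chain-rule computation, and I would present it essentially as above, perhaps noting explicitly that the equality $A_{\mathcal L}=A\circ L^{T}$ also immediately re-proves, via Theorem~\ref{thm:brown} applied to the restricted family, that $F_{\mathcal L}$ maps $C_{\mathcal L}$ bijectively onto the interior of $K_{\mathcal L}$.
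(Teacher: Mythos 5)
Your proposal is correct and follows essentially the same route as the paper: establish $A_{\mathcal L}=A\circ L^{T}$ from the integral representation via (\ref{eq:projNconstr}), then apply the chain rule to identify $F_{\mathcal L}=L\circ F\circ L^{T}$. Your extra care about which inner product induces the pairing on $\R^{c}$ is a reasonable refinement of the paper's coordinate-wise statement that $\partial A_{\mathcal L}/\partial\tau_i$ is the directional derivative of $A$ along the $i$-th row of $L$, but it does not change the argument.
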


\begin{proof}
A computation similar to  (\ref{eq:projNconstr}) shows  that
 all canonical parameters $\tau\in C_{\mathcal L}$ satisfy
\begin{equation}\label{eq:ALtheta}
A_{\mathcal L}(\tau)\,=\,
\int_{\cX}\exp(-\langle \tau, (L \circ T)(x)\rangle )\nu(\dd x)
\,=\,\int_{\cX}\exp(-\langle L^{T}\tau, T(x)\rangle)\nu(\dd x)\,=\,A(L^{T}\tau).
\end{equation}
Thus the derivative of $A_{\mathcal L}(\tau)$ with respect to $\tau_{i}$ is equal to the
directional derivative of $A(\theta)$ in the direction given by the $i$-th row of $L$. This implies the
following identify which proves~(\ref{eq:FLtheta}):
$$\nabla_{\!\tau} A_{\mathcal L}(\tau) \,\,\,= \,\,\, L\nabla_{\!\theta} A(\theta)\Big|_{\theta=L^{T}\tau}. $$
\vskip -0.8cm
\end{proof}

\begin{example}[({\em Toric varieties from discrete exponential families})]
\label{ex:rationaldiscrete1} \rm
We build on Example~\ref{sub:Full_discrete} and consider linear restrictions of the parameter space
in the full discrete exponential family. Let $\mathcal{L}$ be a linear subspace of $\R^d$ that
is defined over the rational numbers, and take $L \in \mathbb{Z}^{c \times d}$. We
assume that $(1,1,\dots,1)\in \mathcal L$. Here $C=\R^{d}$ and thus $C\cap \mathcal L=\mathcal L$.
The set $K_\mathcal{L}$ is the image of the $(d-1)$-dimensional simplex $K$
under the linear map $L: \R^d \rightarrow \R^c$. Thus $K_\mathcal{L}$ is the
$(c-1)$-dimensional convex polytope obtained as the convex hull of the columns of $L$.

Consider the image $F(\mathcal{L})$ of $\mathcal{L}$ under the gradient map
$F$ in (\ref{eq:FDF2}). This image is a semialgebraic set of dimension $c-1$ inside the
$(d-1)$-dimensional simplex $K$.  To be explicit,
$$ F(\mathcal{L}) \quad = \quad \bigl\{\,(\sigma_1,\sigma_2,\ldots,\sigma_d) \in K \,:\,
\prod_{i=1}^d \sigma_i^{u_i} = 1 \,\,\,\hbox{for all} \,\, u \in \mathcal{L}^\perp\, \bigr\}. $$
Since $L$ has integer entries, we can here
replace the space $\mathcal{L}^\perp $
with the lattice $\mathcal{L}^\perp_\mathbb{Z} = \mathcal{L}^\perp \cap \mathbb{Z}^d$.

We write $\mathcal L^{F}$ for the Zariski closure of $F(\mathcal L)$ in $\C \PP^{d-1}$,
regarded as the complexification of $K$.
This is the {\em projective toric variety} associated with the polytope $K_\mathcal{L}$.
To be explicit,
$$ \mathcal{L}^F \quad = \quad
\bigl\{  \, (\sigma_1:\sigma_2: \ldots: \sigma_d) \in \C \PP^{d-1} \,:\,
\prod_{i: u_i > 0} \sigma_i^{u_i} \,=
\prod_{j: u_j < 0} \sigma_j^{-u_j} \!
 \,\,\,\,\hbox{for all} \,\, u \in \mathcal{L}_\mathbb{Z}^\perp\, \bigr\}.
$$
In algebraic statistics, one refers to $F(\mathcal{L})$ as the {\em toric model}
of the lattice $\mathcal{L}_\mathbb{Z}$.
The linear map $ L$ takes this model
bijectively onto the polytope $K_\mathcal{L}$ of sufficient statistics.
In geometry, this bijection is known as the {\em moment map}
of the toric variety $\mathcal{L}^F$. Given any  point in ${\rm int}(K_\mathcal{L})$,
its unique preimage in $F(\mathcal{L})$ is known as the {\em Birch point} or  the {\em MLE}.
For further reading on toric models see
 \cite[Section 1.2.2]{ASCB}, \cite{RKA},
and  other introductions to algebraic statistics. \hfill $\diamondsuit$
\end{example}

\begin{figure}[h]
\centering
 \includegraphics[width=6.8cm]{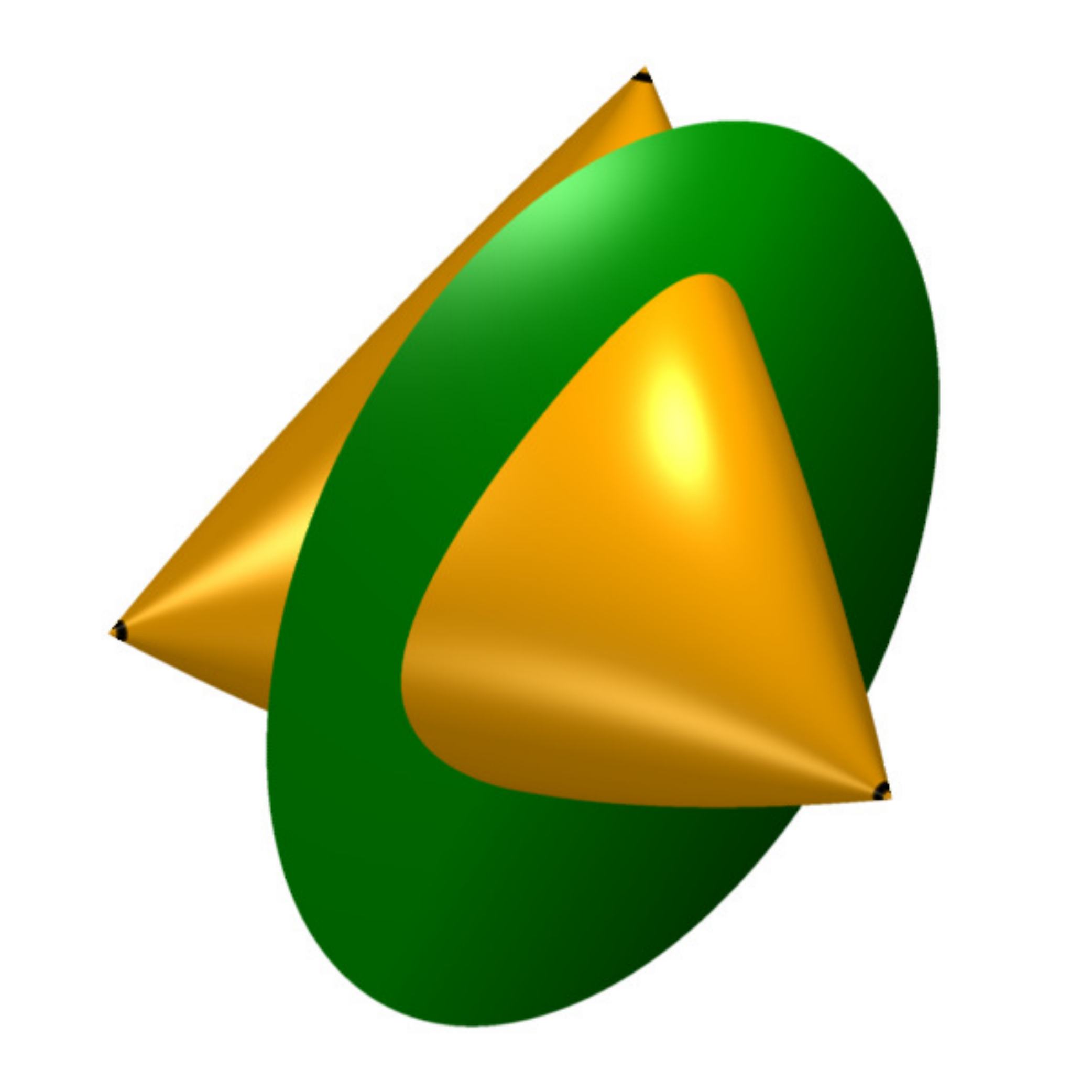}
  \qquad \includegraphics[width=6.1cm]{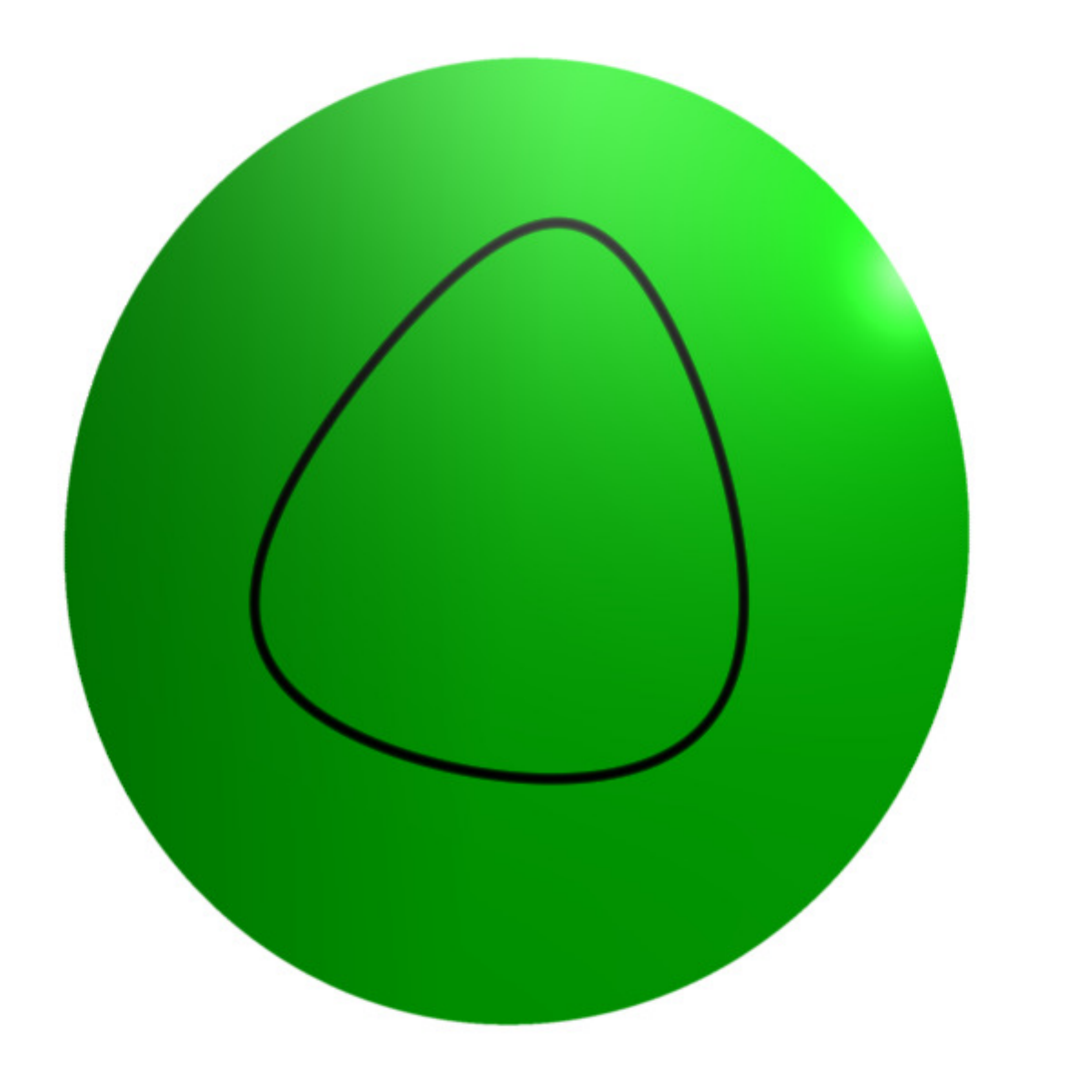}
 \caption{The convex body $C$ of canonical parameters
 is intersected  with a linear space $\mathcal{L}$.
 Dual to the intersection  $C \cap \mathcal{L}$ is
 the projection of  the exponential variety   shown in Figure~\ref{fig:redblue}.  }
 \label{fig:yellowgreen}
\end{figure}

We now finally come to the definition of exponential varieties.
They arise when restricting a hyperbolic exponential family to a  subspace $\mathcal{L}$.
They are analogous to the toric varieties above.

\begin{definition}[({\em Exponential variety})]\label{df:EV} \rm
Let $f(\theta)$ be a hyperbolic polynomial,
with hyperbolicity cone $C\subset \mathbb{R}^d$ and dual cone $K = C^\vee$. The corresponding \emph{exponential variety}, denoted 
$\, \mathcal L^{F} = \overline{F(C\cap \mathcal{L})}$,
is the Zariski closure of the image of $\mathcal{L}$ under the gradient map
$F = -\nabla \log(f)$.
\end{definition}

Here the Zariski closure could be taken in $\R^d$, but
we usually prefer to work in $\C \PP^{d-1}$ instead. In this case, like in (\ref{eq:ratlmap2}), we can equivalently study the image of $\nabla f$.
If $c = {\rm dim}(\mathcal{L})$ then $ \mathcal L^{F}$ will be
a $(c-1)$-dimensional projective variety in $\C \PP^{d-1}$.
The {\em positive exponential variety}
is the semialgebraic set $\mathcal L_{\succ 0}^{F}=F(C\cap \mathcal L)$.
This is analogous to the toric model. It lives in
the space $K \subset \R^d$ of sufficient statistics, or preferably,
in its image in $\R \PP^{d-1}$.

A key property of the exponential variety $\mathcal{L}^F$ is that the canonical bijection between
$ C_\mathcal{L}$ and ${\rm int}(K_\mathcal{L})$
   factors through the positive part $\mathcal L_{\succ 0}^{F}$.
This is the content of the following result.

\begin{theorem} \label{thm:sec4main}
Fix a hyperbolic exponential family, with $f , C,K$ and $ F = -\nabla \log(f)$ as above, and 
let $\mathcal L$ be a linear subspace of $\R^d$ that intersects
 $C$. Then the gradient map $F_{\mathcal L}$ of the restricted
  exponential family on $\mathcal L$ can be written as a sequence of maps
\begin{equation}
\label{eq_maps}
C_{\mathcal L}\;\;\subset\;\; C\;\;\overset{F}{\longrightarrow} \;\;K\;\;\overset{L}{\longrightarrow} \;\;K_{\mathcal L}.
\end{equation}
The convex set $C_\mathcal{L}$
of canonical parameters maps bijectively to
the positive exponential variety $\mathcal L_{\succ 0}^{F}$,
and $\mathcal L_{\succ 0}^{F}$ maps bijectively to the interior of the convex set $K_\mathcal{L}$
of sufficient statistics.
\end{theorem}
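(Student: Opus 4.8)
The plan is to factor the bijection $C_\mathcal{L} \leftrightarrow \mathrm{int}(K_\mathcal{L})$ through the two arrows in (\ref{eq_maps}) and prove bijectivity of each piece separately. First I would invoke Theorem~\ref{thm:brown}, which applies to the restricted family $(\cX,\nu,T_\mathcal{L})$ provided it is minimal and regular: regularity is inherited because $C_\mathcal{L} = C \cap \mathcal{L}$ is open in $\mathcal{L} \simeq \R^c$ (as $C$ is open and we assumed $C \cap \mathcal{L} \neq \emptyset$), and minimality holds after possibly shrinking to the affine span of $T_\mathcal{L}(\cX)$ — equivalently, we may assume $L$ has full row rank $c$, so that $K_\mathcal{L}$ is full-dimensional in $\R^c$. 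Then Theorem~\ref{thm:brown} gives that $F_\mathcal{L}$ is an analytic bijection from $C_\mathcal{L}$ onto $\mathrm{int}(K_\mathcal{L})$; Remark~\ref{rem_Riesz} assures us this still holds for hyperbolic families even though $\nu$ may be signed, since $F_\mathcal{L}(\tau) = -\nabla_\tau \log f(L^T\tau)$ is again a self-concordant barrier of $C_\mathcal{L}$.

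Next I would identify the factorization. By the second lemma, $F_\mathcal{L} = L \circ F \circ L^T$, and under the identification $\R^c \simeq \mathcal{L}$ via $\tau \mapsto L^T\tau$ the leftmost map $L^T$ is just the inclusion $C_\mathcal{L} \hookrightarrow C$. So $F_\mathcal{L}$ restricted to $C_\mathcal{L}$ equals $L \circ F|_{C_\mathcal{L}}$, i.e. the composite $C_\mathcal{L} \subset C \xrightarrow{F} K \xrightarrow{L} K_\mathcal{L}$ displayed in (\ref{eq_maps}). By definition $\mathcal{L}^F_{\succ 0} = F(C \cap \mathcal{L}) = F(C_\mathcal{L})$, which is exactly the image of the first arrow. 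Since $F$ is injective on all of $C$ (Theorem~\ref{thm:brown} applied to the ambient family, using Remark~\ref{rem_Riesz} for the hyperbolic case), its restriction to $C_\mathcal{L}$ is injective, so $C_\mathcal{L} \to \mathcal{L}^F_{\succ 0}$ is a bijection. For the second arrow, $L$ maps $\mathcal{L}^F_{\succ 0} = F(C_\mathcal{L})$ onto $F_\mathcal{L}(C_\mathcal{L}) = \mathrm{int}(K_\mathcal{L})$ by the previous paragraph; and $L$ is injective on $\mathcal{L}^F_{\succ 0}$ precisely because the composite $L \circ F|_{C_\mathcal{L}} = F_\mathcal{L}|_{C_\mathcal{L}}$ is injective while $F|_{C_\mathcal{L}}$ is a bijection onto $\mathcal{L}^F_{\succ 0}$. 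Hence $\mathcal{L}^F_{\succ 0} \to \mathrm{int}(K_\mathcal{L})$ is a bijection as well, and the two bijections compose to the canonical one.

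The main obstacle I anticipate is not the set-theoretic bookkeeping but making sure the hypotheses of Theorem~\ref{thm:brown} genuinely transfer to the restricted family, and in particular handling the case where $L$ does \emph{not} have full row rank or where $T_\mathcal{L}(\cX)$ fails to span $\R^c$ — there one must first pass to a minimal reparametrization, check that this does not change $C_\mathcal{L}$, $\mathcal{L}^F_{\succ 0}$, or $K_\mathcal{L}$ up to the obvious affine identifications, and then apply the theorem. A secondary subtlety is that for hyperbolic (as opposed to genuine statistical) exponential families the measure $\nu = q_\alpha(\sigma)\dd\sigma$ may be signed, so Theorem~\ref{thm:brown} is not literally available; here I would lean on Remark~\ref{rem_Riesz} and the self-concordant-barrier argument of \cite{guler2,Renegar} to supply the bijection $C_\mathcal{L} \leftrightarrow \mathrm{int}(K_\mathcal{L})$ directly, bypassing the probabilistic proof. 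Everything else — injectivity of $F$ on $C$, the formula $F_\mathcal{L} = L \circ F \circ L^T$, and the identification of $\mathcal{L}^F_{\succ 0}$ with $F(C_\mathcal{L})$ — is immediate from results already established in the excerpt.
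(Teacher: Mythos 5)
Your proposal is correct and follows essentially the same route as the paper's proof: both rest on the factorization $F_{\mathcal L}=L\circ F\circ L^{T}$ from the lemma, the bijectivity of $F$ and $F_{\mathcal L}$ supplied by Theorem~\ref{thm:brown} together with Remark~\ref{rem_Riesz}, and the observation that the middle map $L$ must then be a bijection on $F(C_{\mathcal L})=\mathcal L_{\succ 0}^{F}$. Your extra care about minimality, full row rank of $L$, and the signed-measure caveat is a reasonable elaboration of details the paper leaves implicit, not a different argument.
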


\begin{proof} Let $\tau$ be the coordinates on $C_{\mathcal L}$ and  $\theta$ the coordinates on $C$. The projection $\pi_\mathcal{L}: K\to K_{\mathcal L}$ is defined by
the $c \times d$-matrix $L$, and the embedding $C_{\mathcal L}\subset C$ is given by $\theta=L^{T}\tau$. From (\ref{eq:FLtheta}) we know that $L\circ F\circ L^{T}=F_{\mathcal L}$.
This is precisely what is being asserted in (\ref{eq_maps}).
 By Theorem~\ref{thm:brown} and Remark~\ref{rem_Riesz}, the two gradient
 maps $F$ and $F_{\mathcal L}$ are both bijections. Since $C_{\mathcal L}\subset C$ is an embedding, the projection $\pi_{\mathcal L}$ must be a bijection when restricted to $F(C_{\mathcal L})
 = \mathcal L_{\succ 0}^{F}$.
 \end{proof}

 \begin{corollary} \label{eq:samedimension}
 The dimension of the exponential variety $\mathcal{L}^F$ equals
 that of the space~$\mathcal{L}$.
 \end{corollary}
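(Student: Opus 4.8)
The plan is to deduce this immediately from Theorem~\ref{thm:sec4main}. That theorem tells us that the positive exponential variety $\mathcal{L}^F_{\succ 0} = F(C \cap \mathcal{L})$ is in bijection with the convex set $C_\mathcal{L} = C \cap \mathcal{L}$, which is an open subset of the $c$-dimensional linear space $\mathcal{L}$ (nonempty by hypothesis). Hence $\mathcal{L}^F_{\succ 0}$ is a semialgebraic set of dimension $c = \dim \mathcal{L}$: the bijection in question is the restriction of the rational map $F$ (composed with the linear embedding $L^T$), so on an open dense subset it is given by rational functions and therefore does not lower dimension, and being a bijection it does not raise it either.

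Next I would pass from the positive part to the whole variety. By Definition~\ref{df:EV}, the exponential variety $\mathcal{L}^F$ is the Zariski closure of $F(C \cap \mathcal{L})$ in $\C\PP^{d-1}$ (or in $\R^d$). The Zariski closure of a semialgebraic set of dimension $c$ is an algebraic variety of the same dimension $c$, since the real dimension of a real algebraic set equals the Krull dimension of its complexification and Zariski closure preserves dimension of a constructible or semialgebraic set. Concretely, $\mathcal{L}^F$ is the Zariski closure of the image of the $(c-1)$-dimensional projective linear space $\mathcal{L} \subset \C\PP^{d-1}$ under the rational map $\nabla f$, so $\dim \mathcal{L}^F \le c - 1$ in projective terms (equivalently $\le c$ affinely); and the bijection onto $\mathrm{int}(K_\mathcal{L})$, a body of full dimension $c-1$ in $\R^c$, forces $\dim \mathcal{L}^F \ge c-1$.

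The only point requiring a word of care is that the gradient map $F$ restricted to $C \cap \mathcal{L}$ might a priori collapse dimension if $f$ were degenerate along $\mathcal{L}$; but this cannot happen precisely because Theorem~\ref{thm:sec4main} furnishes the bijection with $C_\mathcal{L}$, and a bijection between a $c$-dimensional convex set and its image pins the dimension of that image to be exactly $c$. So I do not expect any real obstacle here; the corollary is a direct dimension count riding on Theorem~\ref{thm:sec4main}. I would simply write: by Theorem~\ref{thm:sec4main}, $\mathcal{L}^F_{\succ 0}$ is in bijection with the $c$-dimensional convex body $C_\mathcal{L}$ via the rational map $F$, hence has dimension $c$; its Zariski closure $\mathcal{L}^F$ therefore also has dimension $c$, matching $\dim \mathcal{L}$.
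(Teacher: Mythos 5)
Your proof is correct and takes essentially the same route as the paper, which states this corollary with no separate proof precisely because it falls out of the two bijections in Theorem~\ref{thm:sec4main} exactly as you describe. The one imprecise phrase is that rational maps ``do not lower dimension'' (they can collapse); what actually pins the dimension is the injectivity of $F$ on $C_{\mathcal L}$ together with the linear surjection onto the full-dimensional ${\rm int}(K_{\mathcal L})$ --- both of which you invoke, so the argument stands.
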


\begin{remark} In defining exponential varieties we chose
to restrict to hyperbolic polynomials.
A more inclusive definition would start with an arbitrary 
rational exponential family $(\cX,\nu,T)$ with
  gradient map $F:C\to K$. For any subspace $\mathcal{L}\subset\mathbb{R}^d$
 with $C \cap \mathcal{L}\neq\emptyset$, one might then refer to
 $\, \mathcal L^{F} = \overline{F(C\cap \mathcal{L})}\,$ as an exponential variety.
 Theorem \ref{thm:sec4main} remains true in that setting.
But, even that definition does not yet include toric varieties.
Indeed, the full discrete exponential family is not rational, and
that is why $L$ was an integer matrix
in Example \ref{ex:rationaldiscrete1}.
It could make sense to work with a 
notion of exponential variety that includes toric varieties,
for instance by working in the setting
of algebraic exponential families as
in \cite{DS}. However, we chose not to pursue that path in
the present paper.
We decided to use the term `exponential variety' exclusively
in the sense of Definition \ref{df:EV}.
Our focus is entirely on hyperbolic polynomials $f(\theta)$.
\end{remark}

Given a hyperbolic polynomial $f$, its restriction 
 $f|_\mathcal{L}$ is a hyperbolic polynomial
 on the linear subspace $\mathcal{L} \subset \R^d$.
The hyperbolicity cone of $f_\mathcal{L}$ equals
$\, C_\mathcal{L} = C \cap \mathcal{L}$, and its dual is the cone
\begin{equation}
\label{eq:fivecones}
 K_\mathcal{L} \,\,=\,\,
(C_\mathcal{L})^\vee \,\,= \,\,
 (C \cap \mathcal{L})^\vee \,\,= \,\,\pi_\mathcal{L} (C^\vee) \,\, = \,\, \pi_\mathcal{L}(K).
 \end{equation}

From the perspective of algebraic geometry, we could also define
the exponential variety
$\mathcal{L}^{\nabla f}$ for any
homogeneous $f \in \R[\theta_1,\ldots,\theta_d]$ and any
subspace $\mathcal{L} \subset \C \PP^{d-1}$.
Namely,  $\mathcal{L}^{\nabla f}$ is
the closure of the image of $\mathcal{L}$ under
the gradient map in (\ref{eq:ratlmap2}).
This level of generality makes perfect sense
for the study of algebraic degrees as in Section \ref{sec:ml_degree}.
However, in order for $\mathcal{L}^{\nabla f}$ to have
a distinguished positive part $\mathcal{L}^{\nabla f}_{\succ 0}$,
with its remarkable role as the ``middleman'' in the bijection
$F_\mathcal{L}: C_\mathcal{L}  \rightarrow {\rm int}(K_\mathcal{L})$
between two dual convex sets, we need that $f$ is hyperbolic.

We note that  Conjecture \ref{conj_Riesz} 
remains valid after restricting to a linear section.

\begin{proposition}
\label{prop:restrictBHWC}
Let $f$ be a hyperbolic polynomial with hyperbolicity cone $C$. Suppose that the Riesz kernel 
$q_\alpha(\sigma)$ is nonnegative for $f$ and some $\alpha>0$. Fix a linear subspace $\mathcal{L}$ that intersects the interior of $C$. Then the Riesz kernel for $f|_{L\cap C}$ is also nonnegative for $\alpha$.
\end{proposition}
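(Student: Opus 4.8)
The plan is to realize the Riesz kernel of $f|_{\mathcal L}$ explicitly as a push-forward of the Riesz kernel of $f$, and then identify it with the Riesz kernel by the uniqueness of the integral representation. By Theorem~\ref{th:garding}, writing $q_\alpha$ for the Riesz kernel of $f$, we have $f(\theta)^{-\alpha}=\int_{K}\exp(-\langle\theta,\sigma\rangle)\,q_\alpha(\sigma)\,\dd\sigma$ for all $\theta\in C$. Identify $\mathcal L\simeq\R^c$ via $\tau\mapsto L^{T}\tau$, so the hyperbolicity cone of $f|_{\mathcal L}$ is $C_{\mathcal L}=\{\tau:L^{T}\tau\in C\}$ and, by~(\ref{eq:fivecones}), its dual is $K_{\mathcal L}=L(K)$. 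Substituting $\theta=L^{T}\tau$ with $\tau\in C_{\mathcal L}$ and using $\langle L^{T}\tau,\sigma\rangle=\langle\tau,L\sigma\rangle$ as in~(\ref{eq:projNconstr}) gives
\begin{equation*}
f|_{\mathcal L}(\tau)^{-\alpha}\;=\;\int_{K}\exp(-\langle\tau,L\sigma\rangle)\,q_\alpha(\sigma)\,\dd\sigma\;=\;\int_{K_{\mathcal L}}\exp(-\langle\tau,\sigma'\rangle)\,\dd\bigl(L_{*}(q_\alpha\,\dd\sigma)\bigr)(\sigma').
\end{equation*}

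Next I would check that the measure $L_{*}(q_\alpha\,\dd\sigma)$ on $K_{\mathcal L}$ is nonnegative and absolutely continuous with respect to Lebesgue measure on $\R^{c}$. Nonnegativity is immediate from the hypothesis $q_\alpha\ge 0$. For absolute continuity, write $\R^{d}=\ker L\oplus V$ with $L|_{V}$ an isomorphism onto $\R^{c}$; by Fubini's theorem the push-forward has the form $\tilde q_\alpha(\sigma')\,\dd\sigma'$, where $\tilde q_\alpha(\sigma')$ is, up to a constant Jacobian factor, the integral of $q_\alpha$ over the affine fibre $L^{-1}(\sigma')$. This fibre integral is finite for almost every $\sigma'$, since $\int_{K_{\mathcal L}}\exp(-\langle\tau,\sigma'\rangle)\tilde q_\alpha(\sigma')\,\dd\sigma'=f|_{\mathcal L}(\tau)^{-\alpha}<\infty$ for $\tau\in C_{\mathcal L}$. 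Hence $\tilde q_\alpha\in L^{1}_{\mathrm{loc}}$, $\tilde q_\alpha\ge 0$, and $\tilde q_\alpha$ furnishes an integral representation of $f|_{\mathcal L}(\tau)^{-\alpha}$ of exactly the form~(\ref{eq:intrep2}) with $K$ replaced by $K_{\mathcal L}$.

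Finally, because $\mathcal L$ meets the interior of $C$, the cone $C_{\mathcal L}$ is open and $c$-dimensional, so the Laplace transform is injective on it and the density in an integral representation of $f|_{\mathcal L}(\tau)^{-\alpha}$ is unique up to a Lebesgue-null set. By the uniqueness noted after Theorem~\ref{th:garding}, this unique density \emph{is} the Riesz kernel of $f|_{\mathcal L}$ for the parameter $\alpha$; we have shown it equals $\tilde q_\alpha\ge 0$, which proves the claim. The only genuinely delicate step is the measure-theoretic one in the second paragraph — that the push-forward of the absolutely continuous measure $q_\alpha\,\dd\sigma$ under the linear surjection $L$ is again absolutely continuous, so that it coincides with the Riesz kernel rather than merely representing $f|_{\mathcal L}^{-\alpha}$ as a measure. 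An alternative that sidesteps this is to argue via complete monotonicity: $q_\alpha\ge 0$ makes $f(\theta)^{-\alpha}$ completely monotone on $C$ (as in the proof of Theorem~\ref{th:gardingconverse}), complete monotonicity is preserved under the substitution $\tau\mapsto L^{T}\tau$ since $L^{T}$ maps $C_{\mathcal L}$ into $C$, and one then applies the Bernstein--Hausdorff--Widder--Choquet theorem on $C_{\mathcal L}$ together with the same uniqueness.
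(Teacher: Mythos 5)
Your main argument is correct, but it takes a genuinely different route from the paper's. The paper's proof is essentially the one you mention only as an ``alternative'' in your closing sentences: it uses the Bernstein--Hausdorff--Widder--Choquet theorem to translate $q_\alpha\ge 0$ into complete monotonicity of $f^{-\alpha}$ on $C$, observes that the defining inequalities (\ref{def:cm}) survive restriction to the subcone $\mathcal{L}\cap C$, and then applies the theorem a second time on $C\cap\mathcal{L}$ to produce a nonnegative representing measure on $K_\mathcal{L}$, which the uniqueness of the inverse Laplace transform identifies with the measure induced by the restricted Riesz kernel. Your primary argument instead pushes the measure $q_\alpha\,\dd\sigma$ forward along $L$ and exhibits its density as a fiber integral of $q_\alpha$ over the affine fibers of $L$. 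Both proofs rest on the same final uniqueness step; they differ only in how the nonnegative representing measure on $K_\mathcal{L}$ is produced. The paper's route is softer and sidesteps the measure-theoretic verification entirely, while yours is more constructive: it gives an explicit formula for the restricted kernel as a partial integral of the original one, which is exactly the computation the paper carries out concretely in Example~\ref{ex_diag_Gaussian}. Your treatment of the one delicate point --- that the push-forward of $q_\alpha\,\dd\sigma$ under the linear surjection $L$ is again absolutely continuous, with fiber integrals finite almost everywhere because the Laplace transform is finite on the nonempty open cone $C_\mathcal{L}$ --- is sound, since Tonelli's theorem applies to the nonnegative integrand $q_\alpha$.
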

\begin{proof}
By the Bernstein-Hausdorff-Widder-Choquet Theorem we know that $f^{-\alpha}$ is completely monotone on $C$. The inequalities (\ref{def:cm}) remain true after restricting to the subcone 
$\mathcal{L}\cap C$, hence $f^{-\alpha}|_{ \mathcal{L}\cap C}$ is completely monotone. Again, by Bernstein-Hausdorff-Widder-Choquet we know that there exists a measure $\mu$ on 
the dual cone $\,K_\mathcal{L} = ( C \cap \,\mathcal{L})^\vee\,$ such that
$$ f(\theta)^{-\alpha}\,\,=\,\,
\int_{(\mathcal{L}\cap C)^\vee}\exp(-\langle \theta,\sigma\rangle)\;\mu(\dd \sigma).$$
By the uniqueness of the inverse Laplace transform, the measure $\mu$ must be induced by the Riesz kernel $q_\alpha(\sigma)$ as in (\ref{eq:intrep2}).
This means that $q_\alpha(\sigma)$ cannot take negative values. 
\end{proof}

The most prominent  hyperbolic polynomial is the determinant of a symmetric matrix
of linear forms. Conjecture \ref{conj_Riesz}  holds for such Gaussian models,
by Propositions \ref{prop_Wishart} and \ref{prop:restrictBHWC}.

\begin{example}[({\em Linear Gaussian concentration models})]  \rm
\label{ex:lineargaussian}
Consider the multivariate Gaussian family described in Example~\ref{ex:fullgaussian},
with $C = {\rm PD}_m$ and $K = \overline{{\rm PD}}_m$ in the space
$\S^m$ of real symmetric $m \times m$-matrices.
 A linear subspace $\mathcal{L}$ with $\mathcal{L}\cap C
  \neq\emptyset$ defines a {\em linear Gaussian concentration model}. The corresponding exponential variety
  $\mathcal{L}^{\nabla f}$ was studied in~\cite{stuhl},
  where it was denoted by $\mathcal{L}^{-1}$.
   Its positive part consists of all covariance matrices in the model, i.e.,
$$\mathcal L^{-1}_{\succ 0} \,\,= \,\,\{\sigma \in {\rm PD}_m \;\mid\; \sigma^{-1} \in\mathcal{L}\}. $$
The instances of most interest are the {\em Gaussian graphical models} in \cite{Dempster,Lauritzen} where $\mathcal{L}$ is defined by the vanishing of some of the off-diagonal entries $\theta_{ij}$. 
A natural extension is the class of
colored Gaussian graphical models  in
 \cite{HLcolor}. The case when $\mathcal{L}$ consists of diagonal matrices is studied in
\cite[Section 3]{stuhl}.
For a concrete example, let $m=4$ and consider the subspace
$$
\mathcal{L} \,\, = \,\, \{\,
{\rm diag}( \tau_{1} , \tau_{2} , \tau_{1}-\tau_{2} , \tau_{1}+\tau_{2})
\,:\,\,
\tau \in \R^2 \,\}.
$$
The exponential variety $\mathcal{L}^{-1}$ is a cubic curve in the $\C \PP^3$
of diagonal matrices $\sigma$, namely
$$ \mathcal{L}^{-1} \, = \,
\bigl\{ \sigma \in \C \PP^3: \,
\sigma_1 \sigma_2 - \sigma_1 \sigma_4 - \sigma_2 \sigma_4 =
 \sigma_1 \sigma_3 + \sigma_1 \sigma_4 - 2 \sigma_3 \sigma_4 =
\sigma_2 \sigma_3- \sigma_2 \sigma_4 - 2 \sigma_3 \sigma_4  = 0 \bigr\}.
$$
The positive part $\mathcal{L}^{-1}_{\succ 0}$
consists of $\sigma \in \mathcal{L}$ where all  $\sigma_i$ are real and positive.
Writing  $L$ for the linear map given by
 $\rho_1 =  \sigma_1 + \sigma_3 + \sigma_4$ and
$\rho_2 =  \sigma_2 - \sigma_3 + \sigma_4$,
the bijections in (\ref{eq_maps}) are
 $$
 C_\mathcal{L}=\{\tau \in \R^2: \tau_{1}>\tau_{2}>0\}  \,\,
  \overset{\nabla f}{\longrightarrow}\,\,
  \mathcal{L}^{-1}_{\succ 0} \,\,
    \overset{L}{\longrightarrow} \,\,K_{\mathcal L}
    =\{ \rho \in \R^2: \rho_{1}\geq 0,\rho_{1}+\rho_{2}\geq 0\}.
    $$
The Riesz kernel for this particular exponential variety was
computed in Example  \ref{ex_diag_Gaussian}.
 \hfill $\diamondsuit$
 \end{example}

\begin{figure}[h]
\centering
  \vspace{-0.5in}
 \includegraphics[width=5.8cm]{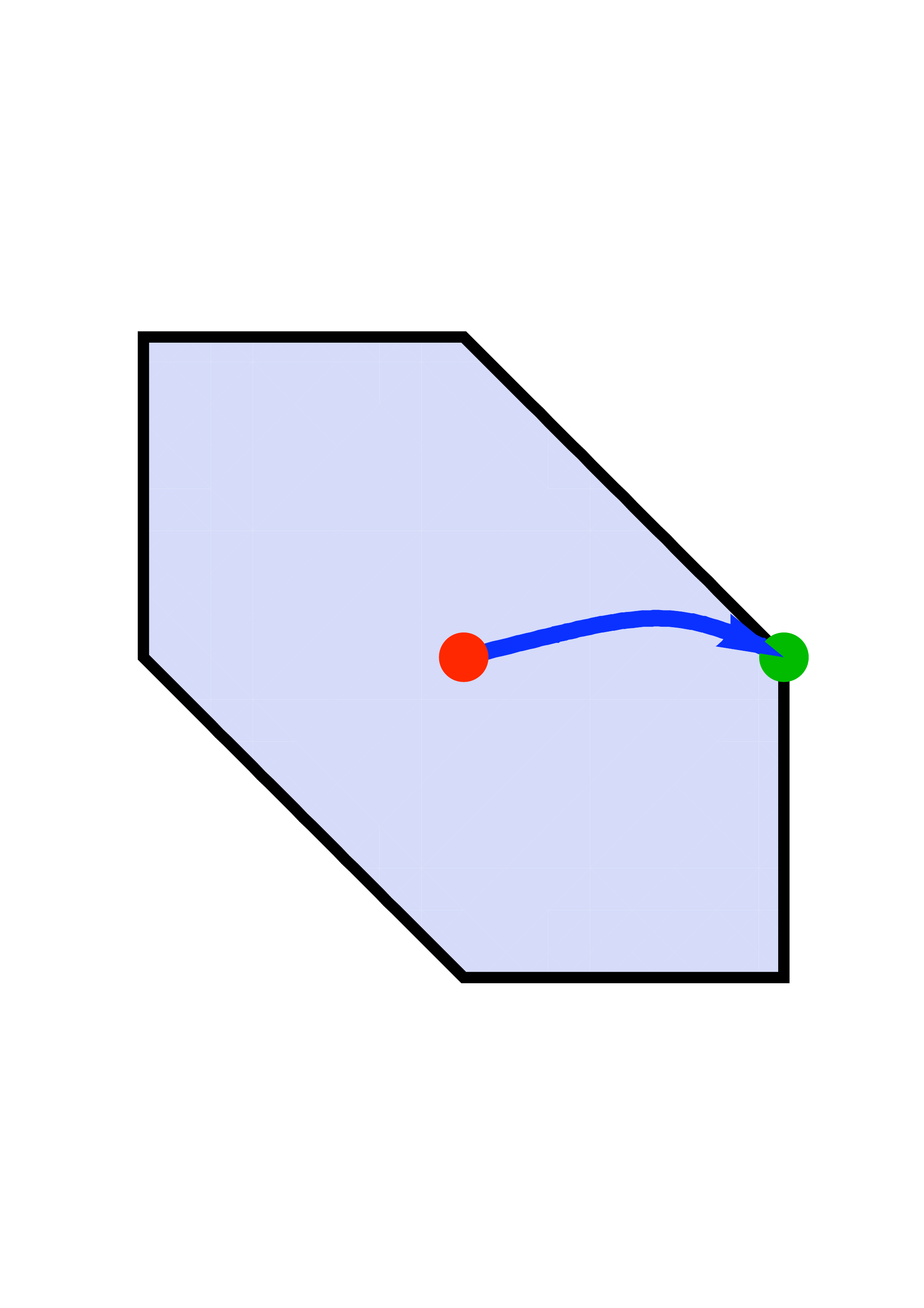}   \qquad
  \includegraphics[width=6.1cm]{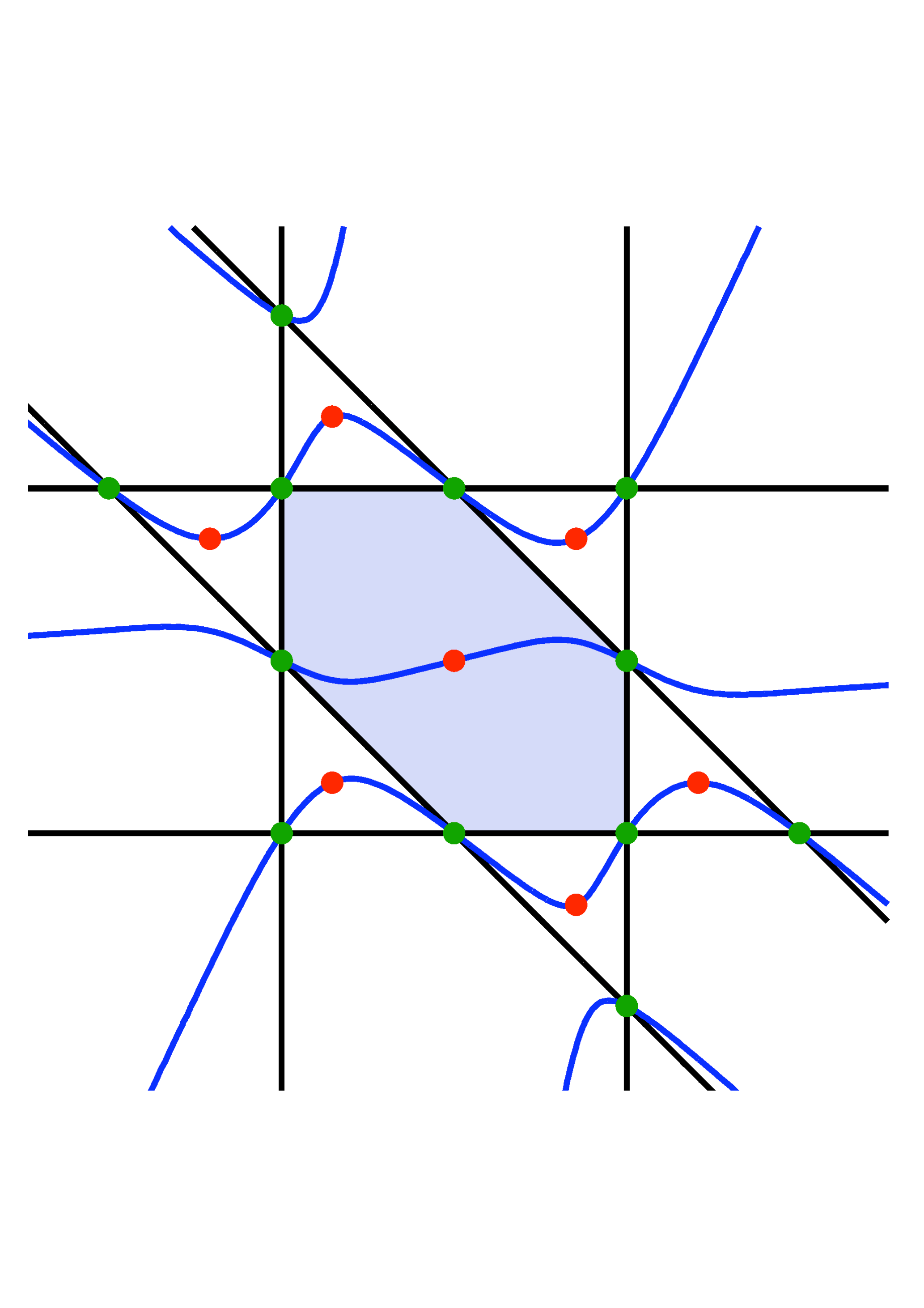}
  \vspace{-0.5in}
 \caption{Central curves in linear and semidefinite programming
 are exponential varieties.}
  \label{fig:centralcurve}
\end{figure}

\begin{example}[({\em The central path is an exponential curve})] \rm
\label{ex:optimization}
Let $f$ be a hyperbolic polynomial and
$C$ its hyperbolicity body in $\R \PP^{d-1}$.
 If $c=2$ then
$\mathcal{L}^{\nabla f}_{\succ 0}$ is
a curve inside the dual body~$K$.
This curve represents a {\em central path}
(cf.~\cite{guler2}) for
linear optimization over $K$.
The case of linear programming
was studied in \cite{DSV}. Here
$K$ is a convex polytope and
$K_\mathcal{L}$ the line segment of
possible values of the  objective function.
The central path and the exponential curve
are shown in Figure~\ref{fig:centralcurve} for the case when
 $d=3$, $c=2$, and $f(\theta)$ is a
product of six linear forms.~$\diamondsuit$
\end{example}

 We discuss two further examples
 of combinatorial interest. These are based on \cite{BrandenES,kummer,sokal}.

\begin{example}[({\em Graphs})]
\label{ex:graphs}
\rm
We fix a connected simple graph $G$ with vertex set
$ \{0,1,\ldots,m \}$ and edge set $E(G)$. The {\em reduced Laplacian} of $G$ is
a symmetric $m \times m$ matrix $\Lambda_G $ with
 rows and columns indexed by $\{1,\ldots,m\}$.
Its  nonzero off-diagonal  entries are $-\theta_{ij}$, and
 the $i$-th row sum of $\Lambda_G$ equals $\theta_{0i}$.
 By convention,  $\theta_{ij}=0$ if $\{i,j\}\notin E(G)$.
 The {\em spanning tree polynomial} $\,f(\theta) = {\rm det}(\Lambda_G)\,$
   is  a complete hyperbolic polynomial. We fix the exponential family for $f$.

 Now, let $H$ be any subgraph of $G$ that is also connected.
 We consider the linear subspace
$$\mathcal{L} \,= \,\bigl\{ \theta \in \R^{E(G)} \,:\,\,\theta_{ij} = 0 \,
\,\, \,\hbox{for}\,\, \{i,j\} \in E(G) \backslash E(H) \,\bigr\}.$$
The exponential variety $\mathcal{L}^{\nabla f}$ is an invariant
of the pair $H \subset G$.
It lives in $\C \PP^{|E(G)|-1}$ and its dimension is $|E(H)|-1$.
The players of Theorem  \ref{thm:sec4main}
are quite interesting in this~case.

For a concrete example, let $m=3$ and  $G$  the
complete graph $K_4$, with reduced Laplacian
\begin{equation}
\label{eq:K4laplace}
 \Lambda_{K_4} \,\,= \,\,
\begin{bmatrix}
 \theta_{01}+\theta_{12}+\theta_{13} & -\theta_{12} &        -\theta_{13} \\
  -\theta_{12} &        \theta_{02} +\theta_{12} +\theta_{23} &  -\theta_{23} \\
  -\theta_{13} &       -\theta_{23} &       \theta_{03} +\theta_{13} + \theta_{23}
\end{bmatrix}.
\end{equation}
The cubic polynomial $f(\theta)  = {\rm det}(\Lambda_{K_4})$ has
$16$ monomials, one for each spanning tree of $K_4$.
Every connected subgraph $H$ of $K_4$ defines an
exponential variety $\mathcal{L}^{\nabla f}$ in $\C \PP^5$.
For instance, let $H$ be the $4$-cycle, with
$\mathcal{L} = \{\theta_{02}=\theta_{13} =0\}$.
Then $ \mathcal{L}^{\nabla f} $ is a threefold of degree $4$,
obtained as the complete intersection of two quadrics,
that is singular along three lines.
On the other hand, if $H$ is the $4$-chain with
$\mathcal{L} =  \{\theta_{02}=\theta_{13} =\theta_{03}=0\}$,
then the exponential variety is a plane, namely
$ \mathcal{L}^{\nabla f}  = \{ \sigma \in \C \PP^5 :
\sigma_{12}-\sigma_{13}+\sigma_{23} =
\sigma_{02}-\sigma_{03}+\sigma_{23} =
\sigma_{01} - \sigma_{03} + \sigma_{13} = 0 \} $.
 \hfill $\diamondsuit$
\end{example}

\begin{example}[({\em V\'amos matroid})]
\label{ex:vamos} \rm
Let $d = 8$ and
$f(\theta) = \sum \theta_i \theta_j \theta_k \theta_l $
where the sum runs over all quadruples
in $\{1,2,3,4,5,6,7,8\}$ other than
$1256$, $1278$, $3456$, $3478$, $5678$.
So, $f(\theta)$ has $65$ terms.
This is the basis generating function
of the {\em V\'amos matroid}, a popular hyperbolic polynomial.
The specialization studied by Kummer \cite{kummer}
 is the restriction of $f$ to the subspace
$\mathcal{L} = \{\theta \in \R^8:
\theta_1 = \theta_2, \,\theta_3 = \theta_4, \,\theta_5 = \theta_6, \,\theta_7 = \theta_8\}$.
The main result in \cite{kummer} states that
the $3$-dimensional  body $C_\mathcal{L}$ is a spectrahedron.
The bijection from $C_\mathcal{L}$ to the interior  of
$K_\mathcal{L} = C_\mathcal{L}^\vee$ factors through
the positive variety $\mathcal{L}^{\nabla f}_{\succ 0} \subset \R \PP^7$.
On this particular plane $\mathcal{L}$, the gradient map $\nabla f$ is an involution, that is,
$\,\mathcal{L}^{\nabla f}
= \{\sigma_1 = \sigma_2, \,\sigma_3 = \sigma_4, \,\sigma_5 = \sigma_6, \,\sigma_7 = \sigma_8\}$.
However, for generic choices of $\mathcal{L}$,
the exponential variety $\mathcal{L}^{\nabla f}$ is
a non-linear threefold in $\C \PP^7$. For instance, if
we replace $\theta_8$ by $-\theta_8$ in $\mathcal{L}$ then
$\mathcal{L}^{\nabla f}$ is a hypersurface of degree $18$ in
a $4$-plane in $\C \PP^7$. If we further replace  $\theta_6$ by $-\theta_6$    then
$\mathcal{L}^{\nabla f}$ has codimension $2$ and degree $10$
in a $5$-plane in $\C \PP^7$.
 \hfill $\diamondsuit$
\end{example}

In Examples \ref{ex:graphs} and \ref{ex:vamos} we saw
that the exponential variety $\mathcal{L}^{\nabla f}$
can be linear again when $\mathcal{L}$ is chosen in a very special manner.
This raises the question
for which pairs $(f,\mathcal{L})$ this happens.
The remainder of this section is devoted to
what we know about this question.

The case of linear Gaussian concentration models, where $f(\theta)=\det(\theta)$,
 $C = {\rm  PD}_m$ and $\mathcal{L}^{\nabla f} = \mathcal{L}^{-1}$,
  was studied by Jensen in \cite{Jensen}.
He proved that the exponential variety $\mathcal{L}^{-1}$ is linear if and only if $\mathcal{L}$ is a
{\em Jordan algebra}, i.e., $\theta^2\in \mathcal{L}$ for all $m\times m$ matrices $\theta$ in
$\mathcal{L}$. Moreover, Jensen showed that if $\mathcal{L}^{-1}$ is linear
then $\mathcal{L}^{-1}=\mathcal{L}$
under the identification of $\sigma$ with $\theta$.

The situation is not as nice and clean for  hyperbolic polynomials $f(\theta)$ other than the
symmetric determinant. As we have seen in Example~\ref{ex:graphs}, 
 a linear exponential variety $\mathcal{L}^{\nabla f}$
can be quite different from its underlying
 $\mathcal{L}$. It would be very desirable to
find a characterization that generalizes  Jensen's results on Gaussian families to
  hyperbolic exponential families.

As a first start, we examine the special case of codimension $1$.
The following result explicates the
conditions under which an exponential variety $\mathcal{L}^{\nabla f}$
happens to be a hyperplane.

\begin{proposition}
Fix a hyperbolic polynomial $f$ and suppose that the hyperplane
$\bigl\{\sum_{i=1}^d a_i\sigma_i=0 \big\}$ is an exponential variety  for $f$.  Then the polynomial $\sum_{i=1}^d a_i (\partial f/\partial\theta_i)$ is divisible by a linear form $\ell(\theta)$. Moreover, if  the
hyperplane $\mathcal{L} = \{\ell(\theta)=0\}$ satisfies the conclusion of Corollary~\ref{eq:samedimension} --
for instance because $\mathcal{L} \cap C \not = \emptyset$ --
then  $\mathcal{L}^{\nabla f} =  \bigl\{\sum_{i=1}^d a_i\sigma_i=0\bigr\}$.
\end{proposition}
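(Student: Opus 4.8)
The plan is to move back and forth between the geometric statement and a divisibility of polynomials. Put
$$ g(\theta)\;:=\;\sum_{i=1}^{d} a_i\,\frac{\partial f}{\partial\theta_i}(\theta), $$
and observe the tautology that, at any point $\theta$ where the gradient map $\nabla f$ of (\ref{eq:ratlmap2}) is defined, the image $\nabla f(\theta)$ lies on the hyperplane $H:=\{\sum_i a_i\sigma_i=0\}$ if and only if $g(\theta)=0$. Suppose $H=\mathcal{M}^{\nabla f}$ for a linear subspace $\mathcal{M}$ with $\mathcal{M}\cap C\neq\emptyset$. By Corollary~\ref{eq:samedimension} we have $\dim\mathcal{M}=\dim H=d-2$, so $\mathcal{M}=\{\ell(\theta)=0\}$ for a linear form $\ell$; and since $\mathcal{M}$ meets the open cone $C$, on which $f$ is nonzero (so $\nabla f\neq 0$ there, by Euler's identity), the map $\nabla f$ is defined on a Zariski-dense subset $U\subseteq\mathcal{M}$. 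For every $\theta\in U$ one has $\nabla f(\theta)\in\mathcal{M}^{\nabla f}=H$, hence $g(\theta)=0$; as $U$ is dense in $\mathcal{M}=\{\ell=0\}$, the polynomial $g$ vanishes identically on $\{\ell=0\}$.

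The first assertion then follows from unique factorization in $\C[\theta_1,\dots,\theta_d]$: the linear form $\ell$ is prime, and a polynomial vanishing on the irreducible hypersurface $\{\ell=0\}$ lies in the ideal $(\ell)$, so $\ell\mid g$. If $g$ happens to be the zero polynomial --- the degenerate case in which the entire image of $\nabla f$ is contained in $H$ --- the divisibility is automatic and the argument below still applies with $\ell$ taken to be any linear form whose zero set meets $C$.

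For the ``moreover'' statement I would run the implication in reverse. Let $\ell$ be a linear form dividing $g$, set $\mathcal{L}=\{\ell=0\}$, and assume $\mathcal{L}$ satisfies the conclusion of Corollary~\ref{eq:samedimension}, i.e.\ $\dim\mathcal{L}^{\nabla f}=\dim\mathcal{L}=d-2$ --- which holds in particular when $\mathcal{L}\cap C\neq\emptyset$, by Corollary~\ref{eq:samedimension} itself. Since $\ell\mid g$, the polynomial $g$ vanishes on $\mathcal{L}$, so by the tautology above $\nabla f(\theta)\in H$ for every $\theta\in\mathcal{L}$ at which $\nabla f$ is defined; passing to the Zariski closure gives $\mathcal{L}^{\nabla f}\subseteq H$. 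Finally $H$ is irreducible (it is a linearly embedded $\C\PP^{d-2}$), and $\mathcal{L}^{\nabla f}$ is a closed subvariety of $H$ of the same dimension $d-2$, so $\mathcal{L}^{\nabla f}=H$.

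The only subtle point is to ensure that $\nabla f$ is genuinely defined on a dense subset of the hyperplane under consideration --- equivalently, that this hyperplane is not contained in the base locus $\{\partial f/\partial\theta_1=\cdots=\partial f/\partial\theta_d=0\}$ of the gradient map. In the first part this is guaranteed by $\mathcal{M}\cap C\neq\emptyset$; in the ``moreover'' part it is exactly what the hypothesis on $\mathcal{L}$ (a fortiori $\mathcal{L}\cap C\neq\emptyset$) provides. The remaining ingredients --- the divisibility via unique factorization and the dimension count forcing the two varieties to coincide --- are routine.
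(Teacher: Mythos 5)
Your proof is correct and follows essentially the same route as the paper: you identify the condition $\nabla f(\theta)\in H$ with the vanishing of the polar polynomial $\sum_i a_i\,\partial f/\partial\theta_i$, deduce divisibility by $\ell$ from its vanishing on the hyperplane $\mathcal{M}$, and reverse the argument with a dimension count for the ``moreover'' part. Your explicit use of Corollary~\ref{eq:samedimension} to force $\mathcal{M}$ to be a hyperplane, and your attention to where $\nabla f$ is actually defined, are minor refinements of what the paper leaves implicit, not a different approach.
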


\begin{proof}
Let $\mathcal{H}$ be
 the hyperplane given by $\sum_{i=1}^d a_i\sigma_i=0$. Suppose that $\mathcal{H}$ is an exponential variety $\mathcal{L}^{\nabla f}$, where $\mathcal{L}$ is yet to be specified.
The polynomial $\sum_{i=1}^d a_i \frac{\partial f}{\partial\theta_i}$ defines
a hypersurface polar to the hyperbolic polynomial $f$. A point $\theta \in \C \PP^{d-1}$ lies on that
polar hypersurface if and only if $F(\theta) \in \mathcal{H}$.
Since the hyperplane $\mathcal{L}$ maps to $\mathcal{H}$,
it is a subset of the polar hypersurface. This means that
$\sum_{i=1}^d a_i \frac{\partial f}{\partial\theta_i}$ is divisible
by the linear equation that defines $\mathcal{L}$.

For the second statement, note that if the linear form $\ell(\theta)$ divides $\sum_{i=1}^d a_i \frac{\partial f}{\partial\theta_i}$, then the hyperplane $\mathcal{L}$ defined by $\ell(\theta) $ maps to $\mathcal{H}$. Assuming that the image of $\mathcal{L}$ has codimension one in $(\C \PP^{d-1},\sigma)$, as expected,
then the closure of that image must coincide with $\mathcal{H}$.
\end{proof}

We now show how these conditions can be checked
for the cubic in our running example.

\begin{example}\rm
Consider the elementary symmetric polynomial $E_3$ in four variables, as in
Examples \ref{ex:E_3}, \ref{ex:E_3f},
\ref{ex:E_3b}, \ref{ex:E_3c}. We need to analyze the
quadrics that are polar to the cubic $E_3$:
$$
\sum_{i=1}^4 a_i\frac{\partial E_3}{\partial\theta_i} \quad = \quad \frac{1}{2}
\begin{pmatrix} \theta_1 \! & \! \theta_2\! & \! \theta_3\! & \!\theta_4 \end{pmatrix}
\begin{pmatrix}
0&a_3+a_4&a_2+a_4&a_2+a_3\\
a_3+a_4&0&a_1+a_4&a_1+a_3\\
a_2+a_4&a_1+a_4&0&a_1+a_2\\
a_2+a_3&a_1+a_3&a_1+a_2&0\\
\end{pmatrix}
\begin{pmatrix} \theta_1 \\ \theta_2 \\ \theta_3 \\ \theta_4 \end{pmatrix}
$$
The quadric has a linear factor if and only if
all $3\times 3$ minors of this symmetric matrix vanish. Up to scaling and permuting coordinates,
 this happens only for two points, namely
$a = (-1:1:1:1)$ and $a =(1:-1:0:0)$. In the first case, the quadric is $\theta_1(\theta_2+\theta_3+\theta_4)$, providing us with two planes $\mathcal{L}$ that yield the same exponential variety
$\mathcal{L}^{\nabla E_3} = \{\sigma_1 = \sigma_2+\sigma_3+\sigma_4\}$.
 In the second case the quadric is $(\theta_2-\theta_1)(\theta_3+\theta_4)$. Each factor yields
 $\mathcal{L}^{\nabla E_3} = \{\sigma_1 = \sigma_2\}$.

   It is straightforward to generalize this construction to other polynomials of degree $3$.
\hfill $\diamondsuit$
\end{example}

The situation is more difficult when the given polynomial $f$ has
 degree $m \geq 4$. To find all hyperplanes that are exponential for $f$,
we must eliminate the quantifiers in the formula
$$\exists \, \ell \,\,\hbox{linear}\,:\, \ell(\theta)\,\,\, \hbox{divides} \,\,\,
a_1 \frac{\partial f}{\partial\theta_1}  + \cdots +
a_d \frac{\partial f}{\partial\theta_d}  .$$
To this end, we consider the variety of all homogeneous polynomials
of degree $m-1$ in $d$ unknowns that have a linear factor.
This variety is denoted ${\rm Chow}_{1,m-2}$. It is a variant of the
 {\em Chow variety} of polynomials that are products of linear forms. This is a classical topic
in algebraic geometry with recent connections  to computational complexity \cite[Section~6.4]{BLMW}.

For a given $f$, we must intersect the linear space spanned by
the partial derivatives of $f$ with ${\rm Chow}_{1,m-2}$.
Each intersection point furnishes a candidate for an exponential hyperplane.
For instance, let $d=3$ and $m=4$, so $f$ is a ternary quartic.
Then $\frac{\partial f}{\partial \theta_1},
\frac{\partial f}{\partial \theta_2},\frac{\partial f}{\partial \theta_3}$
span a $2$-plane in the $\C \PP^9$ of plane cubics.
We must intersect that plane with ${\rm Chow}_{1,2}$, which is
a variety of codimension $2$ and degree $21$ in $\C \PP^9$. Its prime ideal
is generated by $35$ polynomials of degree $8$.
The $21$ points $(a_1:a_2:a_3)$ in that intersection represent
all lines $\{a_1 \sigma_1 + a_2 \sigma_2 + a_3 \sigma_3 = 0\}$
in $\C \PP^2$ that can have the structure of an exponential variety with respect to
the quartic $f$.

\section{Gradient multidegree and ML degree}
\label{sec:ml_degree}

In this section we introduce algebraic complexity measures
for hyperbolic exponential families.
The partition function  is  $f^{-\alpha}$
where $\alpha > 0$ and $f$ is a hyperbolic polynomial.
We are interested in the rational map  $F : \C \PP^{d-1} \dashrightarrow \C \PP^{d-1}$
defined by the gradient of the log-partition function $A(\theta) = - \alpha  \cdot {\rm log} \,f(\theta)$.
We shall work in the
convenient setting of projective geometry. This means that
the logarithm and the constant $\alpha$ are ignored.
From now on, $F $ is simply the rational  map (\ref{eq:ratlmap2}) defined by the gradient
   of the polynomial $f(\theta)$.

   The following definition makes sense for any homogeneous polynomial $f(\theta)$.
   However, in our context we always assume that $f(\theta)$
is hyperbolic, that the  hyperbolicity cone $C$ is pointed in $(\R^d,\theta)$
and that its dual $K$ is a pointed cone in $(\R^d, \sigma)$.
We regard $C$ and $K$ as convex bodies in
(affine charts of) the real projective spaces
$(\R \PP^{d-1}, \theta)$ and $(\R \PP^{d-1}, \sigma)$.
The gradient map $F  =\nabla f$ defines a bijection between the
interiors of these two convex bodies. Its inverse is an algebraic function, and we
are interested in the degree of that function.

\begin{definition} \rm
Let $f$ be a homogeneous polynomial
in $\theta_1,\ldots,\theta_d$, and let $X_f$ be the graph of its
gradient map $F = \nabla f$. Thus $X_f$ is the closure in
 $\C \PP^{d-1} \times \C \PP^{d-1}$ of the set of points $(\theta, \sigma)$, where
$\theta $ lies in $\C \PP^{d-1}$ but is not
in the singular locus of  the hypersurface $\{f=0\}$, and
$\sigma = F(\theta)$. By construction, the graph $X_f$ is an irreducible
subvariety of dimension $d-1$ in  $\C \PP^{d-1} \times \C \PP^{d-1}$.
We define the {\em gradient multidegree} of the polynomial $f$ to be
the class $[X_f]$ of that variety in the integral cohomology ring
of the product of the two projective spaces:
 $$ H^* \bigl(\C \PP^{d-1} \times \C \PP^{d-1}; \mathbb{Z} \bigr) \,\,  = \,\,
\mathbb{Z}[\,s,\,t\,]/\langle s^d, t^d \rangle . $$
The ring generators are the divisor classes
$s =  [\C \PP^{d-1} \times \{\sigma_i = 0\}]$ and
$t  = [\{\theta_j = 0\} \times \C \PP^{d-1}]$.

In concrete geometric terms, the gradient multidegree of $f$ is the generating function
$$ [X_f] \,\,\, = \,\,\,
\alpha_d s^{d-1}  + \alpha_{d-1} s^{d-2} t +
\alpha_{d-2} s^{d-3} t^2 + \cdots + \alpha_2 s t^{d-2} + \alpha_1 t^{d-1}, $$
where $\alpha_i$ is the cardinality of the finite set
$X_f \cap (L_{i-1} \times M_{d-i} )$ where
$L_{i-1}$ and $M_{d-i}$ are general subspaces
in $\C \PP^{d-1}$
of dimensions $i-1$ and $d-i$ respectively.
The leading coefficient $\alpha_d$ of  $[X_f]$ is
the degree of the map (\ref{eq:ratlmap2}).
We call this the {\em gradient degree} of $f$.
\end{definition}

In statistical applications, the quantity $\alpha_d $
measures the algebraic complexity of the function $\sigma \mapsto \theta$
 that take the sufficient statistics to the natural parameters. It
 plays the role of the ML degree for the exponential family in question.
 However, in this paper we reserve the term ``ML degree''
 for exponential varieties, as in Definition \ref{def:MLdegree} below.
 This is consistent with the usage in \cite{stuhl}.
The  number $\alpha_d = {\rm degree}(F)$ is
the {\em gradient degree of the hyperbolic exponential family}.

\begin{example}
\label{ex:E_3b}
\rm
Let $d=4$ and fix the elementary symmetric polynomial as in Example \ref{ex:E_3},
$$ f \,\, = \,\,  \theta_1 \theta_2 \theta_3
+ \theta_1 \theta_2 \theta_4
+ \theta_1 \theta_3 \theta_4
+ \theta_2 \theta_3 \theta_4 .$$
The gradient multidegree of $f$ is the following class in the integral cohomology of $\C \PP^3 \times \C \PP^3$:
\begin{equation}
\label{eq:4421}
 [X_f] \,\, = \,\,
4 s^3 \,+\, 4 s^2 t \,+\, 2 s t^2 \,+\,1 t^3.
\end{equation}
We see that the gradient degree of the polynomial $f$ is $4$. Hence we can use
Cardano's formula to express the
 MLE $\hat \theta$  in terms of radicals in the sufficient statistics
$\sigma$ of the data.

To be explicit, the graph $X_f$ is the subvariety of $\C \PP^3 \times \C \PP^3$
defined by the $2 \times 2$-minors~of
\begin{equation}
\label{eq:twobyfour}
\begin{pmatrix}
 \sigma_1 & \sigma_2 & \sigma_3 & \sigma_4 \\
\theta_2 \theta_3 + \theta_2 \theta_4 + \theta_3 \theta_4 &
\theta_1 \theta_3 + \theta_1 \theta_4 + \theta_3 \theta_4 &
\theta_1 \theta_2 + \theta_1 \theta_4 + \theta_2 \theta_4 &
\theta_1 \theta_2 + \theta_1 \theta_3 + \theta_2 \theta_3
\end{pmatrix},
\end{equation}
over the nonsingular locus of $\{f=0\}$, and taking the Zariski closure of the resulting set.

If we replace the unknowns $\sigma_1,\sigma_2,\sigma_3,\sigma_4$ by
fixed generic real numbers, then we get a system of equations in
$\theta = (\theta_1: \theta_2:\theta_3:\theta_4)$ that has
four complex solutions.
Assuming that $\sigma$ is in
the interior of the red convex body $K$, precisely one
of these solutions lies in the yellow convex body $C$.
The colors refer to Figure \ref{fig:somosa}.
This unique solution is the MLE $\,\hat \theta$.
\hfill $\diamondsuit$
\end{example}

\begin{remark} \rm
If $f = {\rm det}(\theta)$ is the symmetric
$m \times m$-determinant, representing the full Gaussian family
 in Example \ref{ex:fullgaussian}, then $[X_f]$ is a binary form 
  that is known in some cases. 
By \cite[Theorem 2.3]{stuhl}, it is symmetric under swapping $s$ and $t$.
 For instance, for $m=4$,
 $$ [X_f] \,= \,1 s^9 + 3 s^8 t + 9 s^7 t^2 + 17 s^6 t^3 + 21 s^5 t^4
 + 21 s^4 t^5 + 17 s^3 t^6 + 9 s^2 t^7 + 3 s t^8 + 1 t^9. $$
 At present, no general formula is known
for the gradient multidegree of the symmetric 
$m \times m$-determinant. Expressions
 for some of its coefficients are derived in \cite{stuhl} after Theorem 2.3.
\end{remark}

In the computer algebra system  {\tt Macaulay2}  \cite{M2},
we can compute $[X_f]$ from a given hyperbolic polynomial $f$
using the built-in command {\tt multidegree}.
See \cite[Section 8.5]{CCA} for an explanation of how multidegrees
in commutative algebra
represent cohomology classes on toric varieties.
Here is a piece of {\tt Macaulay2} code that generates
(\ref{eq:twobyfour}) and the output (\ref{eq:4421}):
\smallskip
\begin{verbatim}
R = QQ[t1,t2,t3,t4,s1,s2,s3,s4,
       Degrees => {{1,0},{1,0},{1,0},{1,0},{0,1},{0,1},{0,1},{0,1}}];
f = t1*t2*t3 + t1*t2*t4 + t1*t3*t4 + t2*t3*t4 ;
gradf = {diff(t1,f),diff(t2,f),diff(t3,f),diff(t4,f)};
M = matrix { {s1,s2,s3,s4}, gradf };
multidegree saturate( minors(2,M), ideal(gradf) )
\end{verbatim}
\smallskip

This method works only for small examples
because the command {\tt saturate} is slow.
To compute the gradient multidegree for larger polynomials $f(\theta)$,
it is better to use the formula
\begin{equation}
\label{eq_points}
\alpha_i \, \,\, = \,\,\,\# \bigl(X_f \cap (L_{i-1} \times M_{d-i})  \bigr).
\end{equation}
To count the number of points in this intersection,
one can either use Gr\"obner bases over a finite field,
or the homotopy methods supplied by
numerical algebraic geometry \cite{bertini}.

We now turn our discussion to exponential varieties.
Let $f$ be a fixed hyperbolic polynomial with hyperbolicity cone $C \subset \R^d$.
We consider any linear subspace  $\mathcal{L}$ which intersects the
interior of $C$, and we set $c = {\rm dim}(\mathcal{L})$.
The corresponding exponential variety in $\C \PP^{d-1}$ is
 $\,\mathcal{L}^{\nabla f} := \overline{F(\mathcal{L})}$. It follows from the results
 in the previous section that $\,{\rm dim}(\mathcal{L}^{\nabla f}) = c-1$ and the positive exponential variety $\, \mathcal{L}^{\nabla f}_{\succ 0} :=
\mathcal{L}^{\nabla f} \cap {\rm int}(K)$ maps bijectively onto the interior of
 $\,(C \cap \mathcal{L})^\vee =\pi_\mathcal{L}(K)\,$
under the linear map $\pi_\mathcal{L}$. The inverse of that map
is an algebraic function. We are interested in the degree.
This can alternatively be described as follows.

\begin{definition}\label{def:MLdegree} \rm
Let $\,\pi_\mathcal{L} : \C \PP^{d-1} \dashrightarrow  \C \PP^{c-1}$
denote the projection with center $\mathcal{L}^\perp$.
This restricts to a finite-to-one map
from the exponential variety $\mathcal{L}^{\nabla f}$ onto
$ \C \PP^{c-1}$. We define
$$ {\rm MLdegree}(\mathcal{L}^{\nabla f})  \,\, := \,\,
{\rm degree}\bigl(
\mathcal{L}^{\nabla f} \dashrightarrow \C \PP^{c-1}\bigr). $$
Thus the {\em ML degree} of the exponential variety $\mathcal{L}^{\nabla f}$
is the cardinality of the generic fiber of $\pi_\mathcal{L}$ restricted to $\mathcal{L}^{\nabla f}$. This is the algebraic degree of the function that takes points in
$\pi_\mathcal{L}(K)$ back to their unique
 preimage in the nonnegative variety $\, \mathcal{L}^{\nabla f}_{\succeq 0}$.
\end{definition}

Our main result in this section
relates the various notions of degrees:

\begin{theorem}
\label{eq:twoinequalities1}
The following inequalities hold for all exponential varieties:
\begin{equation}
\label{eq:twoinequalities2}
  {\rm MLdegree}(\mathcal{L}^{\nabla f}) \,\, \leq \,\,
{\rm degree}(\mathcal{L}^{\nabla f}) \,\, \leq \,\,
\hbox{the coefficient $\alpha_{c}$ of $\,s^{c-1} t^{d-c}$ in $[X_f]$}.
\end{equation}
Moreover, the left inequality is an equality if and only if $\mathcal{L}^{\nabla f} \cap \mathcal{L}^\perp = \emptyset$.
\end{theorem}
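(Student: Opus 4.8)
The plan is to prove the two inequalities separately and then analyze the equality case for the left one. For the \emph{right} inequality, I would exploit the fact that $\mathcal{L}^{\nabla f}$ is the image of $\mathcal{L}$ under the gradient map $F=\nabla f$, so it equals the image of the graph $X_f$ under the projection $X_f\to\C\PP^{d-1}\times\C\PP^{d-1}$ followed by restriction to $\theta\in\mathcal{L}$. Concretely, for a general linear subspace $\mathcal{L}$ of dimension $c$, consider $X_f\cap(\mathcal{L}\times M_{d-c})$ where $M_{d-c}$ is a general $(d-c)$-dimensional linear subspace in the $\sigma$-space; its cardinality is exactly the coefficient $\alpha_c$ of $s^{c-1}t^{d-c}$ in $[X_f]$, by the definition of the gradient multidegree. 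On the other hand, $\deg(\mathcal{L}^{\nabla f})$ counts the intersection of $\mathcal{L}^{\nabla f}$ with a general linear subspace of complementary dimension in the $\sigma$-space, i.e.\ with a general $M_{d-c}$; each such intersection point $\sigma$ has at least one preimage $\theta\in\mathcal{L}$ with $(\theta,\sigma)\in X_f$, giving an injection (even a surjection) from these intersection points to a subset of $X_f\cap(\mathcal{L}\times M_{d-c})$. Hence $\deg(\mathcal{L}^{\nabla f})\le\alpha_c$. For \emph{generic} $\mathcal{L}$ this is an equality; for special $\mathcal{L}$ (as in the theorem, where only $\mathcal{L}\cap C\neq\emptyset$ is required) the map $\mathcal{L}\dashrightarrow\mathcal{L}^{\nabla f}$ may drop degree or $X_f\cap(\mathcal{L}\times M_{d-c})$ may be larger than expected, yielding the stated inequality.

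For the \emph{left} inequality, the key observation is that $\mathrm{MLdegree}(\mathcal{L}^{\nabla f})$ is the degree of the projection $\pi_\mathcal{L}\colon\mathcal{L}^{\nabla f}\dashrightarrow\C\PP^{c-1}$ with center $\mathcal{L}^\perp$, whereas $\deg(\mathcal{L}^{\nabla f})$ is the degree of the projection from $\mathcal{L}^{\nabla f}$ to $\C\PP^{c-1}$ with a \emph{general} center of dimension $d-c-1$. It is a standard fact in projective geometry that for an irreducible nondegenerate variety $Y\subset\C\PP^{d-1}$ of dimension $c-1$ and any linear subspace $\Pi$ of dimension $d-c-1$ disjoint from $Y$, the projection $\pi_\Pi\colon Y\to\C\PP^{c-1}$ is finite of degree at most $\deg Y$, with equality for general $\Pi$; more generally $\deg(\pi_\Pi|_Y)=\deg Y-\deg(Y\cap\Pi')$ for a suitable cone/residual correction, and in any case $\deg(\pi_\Pi|_Y)\le\deg Y$. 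Applying this with $\Pi=\mathcal{L}^\perp$ (and first reducing to the nondegenerate case by replacing the ambient $\C\PP^{d-1}$ by the span of $\mathcal{L}^{\nabla f}$) gives $\mathrm{MLdegree}(\mathcal{L}^{\nabla f})\le\deg(\mathcal{L}^{\nabla f})$.

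For the \emph{equality case} I would make the classical degree-drop formula precise: if $Y\subset\C\PP^{n}$ is irreducible of dimension $r$ and $\Pi$ is a linear subspace of dimension $n-r-1$, then projection from $\Pi$ has degree $\deg Y$ exactly when $\Pi\cap Y=\emptyset$, and the degree drops by (at least) the contribution of $\Pi\cap Y$ otherwise. Thus $\mathrm{MLdegree}(\mathcal{L}^{\nabla f})=\deg(\mathcal{L}^{\nabla f})$ if and only if $\mathcal{L}^\perp\cap\mathcal{L}^{\nabla f}=\emptyset$. One direction is immediate (disjointness forces the projection to be finite of full degree); for the converse one uses that any point of $\mathcal{L}^\perp\cap\mathcal{L}^{\nabla f}$ lies on infinitely many of the lines through $\mathcal{L}^\perp$ used to define $\pi_\mathcal{L}$, so it is a "base point" of the projection and its removal strictly decreases the generic fiber count — here one should be slightly careful and argue via the blow-up of $\mathcal{L}^{\nabla f}$ along $\mathcal{L}^\perp\cap\mathcal{L}^{\nabla f}$, or equivalently via the projection formula on the incidence correspondence. \textbf{The main obstacle} I anticipate is precisely this converse direction: cleanly ruling out pathological cancellations where the presence of $\mathcal{L}^\perp\cap\mathcal{L}^{\nabla f}$ does not actually lower the degree. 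I would handle it by invoking the standard fact that for an irreducible projective variety, projection from a center meeting it in a nonempty set always has degree strictly less than the degree of the variety (this is, e.g., the content of the classical "projection from a point on the variety drops degree by the multiplicity" statement, iterated), combined with the nondegeneracy reduction in the first step.
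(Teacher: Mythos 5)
Your proposal follows essentially the same route as the paper's proof: the right inequality by specializing the factor $L_{c-1}$ to $\mathcal{L}$ in the intersection count defining $\alpha_c$ (keeping $M_{d-c}$ generic, so the intersection stays finite and the point count can only drop), and the left inequality together with the equality criterion via the classical fact that a linear projection has degree equal to $\deg(\text{map})\cdot\deg(\text{image})$ exactly when its center is disjoint from the variety, and strictly smaller otherwise. The argument is correct and matches the paper's.
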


\begin{proof}
Recall from (\ref{eq_points}) that $\alpha_c$ is obtained by counting the points of the intersection of the graph of $\nabla f$ with the product of a generic $(c-1)$-dimensional plane
$L_{c-1}$ and a $(d-c)$-dimensional plane $M_{d-c}$.
If we replace $L_{c-1}$ with a special subspace while
keeping $M_{d-c}$ generic,
  the number of points in the intersection is finite because
  $\dim \mathcal{L}^{\nabla f}=\dim\mathcal{L}$.
  It can only go down
    compared to the completely generic case. This proves the right inequality.

The left inequality is derived from the following well-known fact in algebraic geometry.
When the center of the projection is disjoint from the variety, then the degree of the variety equals the product of the degree of the map and the degree of the image. 
Moreover, when the center of the projection intersects the variety, the product of the cardinality of the generic fiber times the degree of the image is strictly smaller than the degree of the variety. In our situation, the center of the projection
$\pi_{\mathcal{L}}$ is $\mathcal{L}^\perp$, which completes the
proof of the theorem.
\end{proof}

We now offer an illustration of this result with two combinatorial examples.

\begin{example}
\label{eq:dreisechs}
\rm
Let $d = 6$ and $f(\theta)$ be the Laplacian determinant (\ref{eq:K4laplace})
of the complete graph $K_4$.
Using  {\tt Macaulay2} as above, we find that  the gradient multidegree equals
$$ [X_f] \,\, = \,\, {\bf 1} s^5+2 s^4 t+4 s^3 t^2+4 s^2 t^3+2 s t^4+1 t^5 . $$
For instance, we conclude that
all subspaces $\mathcal{L}$ of codimension $2$ or $3$ satisfy
$\,{\rm degree}(\mathcal{L}^{\nabla f}) \leq 4$
In Example~\ref{ex:graphs}, this bound is attained for
the $4$-cycle but not for the $4$-chain.
By contrast, let $g$ be the polynomial obtained from
$f$ by adding $\,\theta_{01} \theta_{02} \theta_{12}
 + \theta_{01} \theta_{03} \theta_{13}
  + \theta_{02} \theta_{03} \theta_{23}
   + \theta_{12} \theta_{13} \theta_{23}$.
 Equivalently, $g$ is the elementary symmetric polynomial
 in six unknowns. We have
 \begin{equation}
 \label{eq:E36}
   [X_g] \,\, = \,\,
{\bf 26} s^5+16 s^4 t+8 s^3 t^2+4 s^2 t^3+2 s t^4+ 1t^5.
\end{equation}
This implies, for instance, that
$\,{\rm MLdegree}(\mathcal{L}^{\nabla g}) \leq
{\rm degree}(\mathcal{L}^{\nabla g}) \leq 16\,$
for all hyperplanes $\mathcal{L}$ in $\C \PP^5$.
We especially note that the gradient map $\nabla g$ is $26$-to-$1$
whereas $\nabla f$ is birational.
\hfill $\diamondsuit$
\end{example}

\begin{example} \rm
All cases of equality in Theorem \ref{eq:twoinequalities1}
occur for  {\em Gaussian graphical models} on four nodes.
Here $d = 10$ and $f(\theta)$ is the determinant
of a symmetric $4 \times 4$-matrix $(\theta_{ij}) $.
\begin{itemize}
\item Let $c = 10$ and $\mathcal{L} = \R^{10}$ the complete model $K_4$.
The degrees in  (\ref{eq:twoinequalities2}) are $\,1 =1=1$.
\item Let $c = 8$ and $\mathcal{L} = \{\theta_{13} {=} \theta_{24} {=} 0\}$ the {\em $4$-cycle}
 model. The degrees in
(\ref{eq:twoinequalities2}) are $5 < 9 = 9 $.
\item Let $c = 3$ and  fix the first model in
\cite[Table 2]{stuhl}. The  degrees in
(\ref{eq:twoinequalities2}) are $\,5 = 5 < 9 $.

\item Let $c = 7$ and $\mathcal{L} = \{ \theta_{13} {=} \theta_{24} {=} \theta_{14} {=}  0\}$
the {\em $4$-chain}. The degrees in
(\ref{eq:twoinequalities2}) are $\,1 < 5 < 17 $.
\end{itemize}
\end{example}

In the remainder of this section, we  examine
the genericity conditions under which equality holds
in the two inequalities in (\ref{eq:twoinequalities2}).
It is immediate from the definition of
the gradient multidegree that ${\rm degree}(\mathcal{L}^{\nabla f})=\alpha_{c}$ for generic
subspaces $\mathcal{L}$. From the proof of Theorem~\ref{eq:twoinequalities1} we know that the left inequality is an equality if and only if $\mathcal{L}^{\nabla f} \cap \mathcal{L}^\perp = \emptyset$.
We believe that this holds for
all polynomials $f$ when $\mathcal{L}$ is generic, but we are presently unable to prove it.

\begin{conjecture}
Let $f$ be a hyperbolic polynomial
in $\theta_1,\ldots,\theta_d$.
Then the exponential variety  $\mathcal{L}^{\nabla f}$
defined by a generic linear subspace $\mathcal{L}$
in $\C \PP^{d-1}$ has its  degree equal to its ML degree.
 Equivalently, for generic $\mathcal{L}$ we have
 $\,\mathcal{L}^{\nabla f} \cap \mathcal{L}^\perp = \emptyset$.
 \end{conjecture}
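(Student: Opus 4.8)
The plan is to show that, for each $c$, the set of $c$-dimensional subspaces $\mathcal{L} \subset \C^{d}$ with $\mathcal{L}^{\nabla f} \cap \mathcal{L}^{\perp} \ne \emptyset$ is contained in a \emph{proper} closed subvariety $Z$ of the Grassmannian $\mathrm{Gr}(c,d)$; since $\mathrm{Gr}(c,d)$ is irreducible, this gives the conjecture, because $\mathcal{L}^{\nabla f} \cap \mathcal{L}^{\perp} = \emptyset$ forces the left inequality of (\ref{eq:twoinequalities2}) to be an equality (Theorem~\ref{eq:twoinequalities1}), and its common value is $\alpha_{c}$ since $\mathrm{degree}(\mathcal{L}^{\nabla f}) = \alpha_{c}$ for generic $\mathcal{L}$. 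The one structural fact I would use is Euler's identity $\sum_{i}\theta_{i}\,(\partial f/\partial\theta_{i}) = (\deg f)\,f(\theta)$: it shows that the bidegree-$(1,1)$ form $\langle\theta,\sigma\rangle$ restricted to the open graph of $\nabla f$ equals $(\deg f)\,f(\theta)$ up to a nonzero scalar. Hence the divisor $Y := X_{f} \cap \{\langle\theta,\sigma\rangle = 0\}$ is a proper closed subset of the irreducible $(d-1)$-dimensional graph $X_{f}$ (a generic $\theta$ has $f(\theta)\ne 0$), so $\dim Y \le d-2$; set-theoretically $Y$ is the closure of $\{(\theta,\nabla f(\theta)) : \theta \in \{f=0\}_{\mathrm{sm}}\}$.

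The heart is the following reduction. Suppose $\sigma \in \mathcal{L}^{\nabla f} \cap \mathcal{L}^{\perp}$. Since $\mathcal{L}^{\nabla f} = \overline{F(\mathcal{L})}$ is the second projection of the closure of the graph $\{(\theta,\nabla f(\theta)) : \theta \in \mathcal{L}\setminus\mathrm{Sing}\{f=0\}\}$, and that closure lies inside both $X_{f}$ and $\mathcal{L}\times\C\PP^{d-1}$, there is a $\theta \in \mathcal{L}$ with $(\theta,\sigma) \in X_{f}$. From $\theta\in\mathcal{L}$ and $\sigma\in\mathcal{L}^{\perp}$ we get $\langle\theta,\sigma\rangle = 0$, so $(\theta,\sigma)\in Y$ and $\C\theta \subseteq \mathcal{L} \subseteq \sigma^{\perp}$, the hyperplane $\{x : \langle x,\sigma\rangle = 0\}$. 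Thus $(\theta,\sigma,\mathcal{L})$ lies in the incidence variety
$$ \Psi \;:=\; \bigl\{ (\theta,\sigma,\mathcal{L}) \in Y \times \mathrm{Gr}(c,d) \;:\; \theta \in \mathcal{L} \subseteq \sigma^{\perp} \bigr\}, $$
which is projective, so its image $Z \subseteq \mathrm{Gr}(c,d)$ under the last projection is closed, and $\dim Z \le \dim\Psi$.

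I would then bound $\dim\Psi$ by fibering over $Y$: for fixed $(\theta,\sigma)\in Y$ one has $\theta\in\sigma^{\perp}$, and the fibre is the set of $c$-dimensional subspaces with $\C\theta\subseteq\mathcal{L}\subseteq\sigma^{\perp}$, namely $\mathrm{Gr}(c-1,\sigma^{\perp}/\C\theta) \cong \mathrm{Gr}(c-1,d-2)$, of dimension $(c-1)(d-c-1)$. Therefore
$$ \dim\Psi \;\le\; \dim Y + (c-1)(d-c-1) \;\le\; (d-2) + (c-1)(d-c-1) \;=\; c(d-c) - 1 \;<\; c(d-c) \;=\; \dim\mathrm{Gr}(c,d), $$
so $Z$ is a proper closed subvariety of $\mathrm{Gr}(c,d)$, as required. (For $c=d$ the cone $\mathcal{L}^{\perp}$ is $\{0\}$ and the statement is vacuous; the computation above is consistent with this, since then the fibre is empty.)

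The step I expect to be the real obstacle is the reduction in the second paragraph: one must be certain that \emph{every} point of $\mathcal{L}^{\nabla f}$, including limit points on the boundary of $F(\mathcal{L})$, has a preimage in $X_{f}$ whose first coordinate lies in $\mathcal{L}$. The safe route is to work throughout with the graph $X_{f}$ --- which is by definition the closure of the graph of $\nabla f$ over $\C\PP^{d-1}\setminus\mathrm{Sing}\{f=0\}$ --- and to exploit that projections of projective varieties are closed, so that passing to the second projection commutes with taking Zariski closures. A secondary annoyance is reducible or non-reduced $f$, for which the base locus of $\nabla f$ can differ from $\mathrm{Sing}\{f=0\}$ and Euler's identity has to be read factorwise; I would dispose of the reduced case first and then reduce to it, observing that replacing $f$ by $f_{\mathrm{red}}$ changes neither $\nabla f$ as a rational map nor the degrees appearing in (\ref{eq:twoinequalities2}).
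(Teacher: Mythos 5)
The paper offers no proof of this statement: it is posed as an open conjecture, and the authors explicitly write that they are ``presently unable to prove it.'' The closest they come is the subsequent Proposition together with Bertini's theorem, which shows only that the \emph{set-theoretic image} $(\nabla f)(\mathcal{L})$ avoids $\mathcal{L}^\perp$ for generic $\mathcal{L}$; the obstruction they identify is precisely the limit points in $\mathcal{L}^{\nabla f}\setminus(\nabla f)(\mathcal{L})$. Your incidence-variety argument addresses exactly that gap, and as far as I can check it is correct and complete. The key points all hold: $X_{f,\mathcal{L}}:=\overline{\{(\theta,\nabla f(\theta)):\theta\in\mathcal{L}\setminus\mathrm{Sing}\{f=0\}\}}$ sits inside $X_f\cap(\mathcal{L}\times\C\PP^{d-1})$ and, being projective, surjects onto $\mathcal{L}^{\nabla f}$ under the second projection, so every point of $\mathcal{L}^{\nabla f}\cap\mathcal{L}^\perp$ --- including boundary points of $F(\mathcal{L})$ --- has a witness $(\theta,\sigma)$ with $\theta\in\mathcal{L}$ and $\langle\theta,\sigma\rangle=0$; Euler's identity shows the bidegree-$(1,1)$ divisor $\{\langle\theta,\sigma\rangle=0\}$ does not contain the irreducible $(d-1)$-dimensional $X_f$, so $\dim Y\le d-2$; and the fiber count $(d-2)+(c-1)(d-c-1)=c(d-c)-1<\dim\mathrm{Gr}(c,d)$ is arithmetically right, so the locus of bad $\mathcal{L}$ lands in a proper closed subvariety. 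This would settle the conjecture.

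Two small caveats. First, your parenthetical description of $Y$ as the closure of the graph over $\{f=0\}_{\mathrm{sm}}$ may miss components of $Y$ lying over $\mathrm{Sing}\{f=0\}$, but you only ever use the bound $\dim Y\le d-2$, which follows from $Y$ being a proper closed subset of the irreducible $X_f$, so nothing breaks. Second, the proposed reduction to reduced $f$ is both unnecessary and incorrectly justified: replacing $f$ by $f_{\mathrm{red}}$ \emph{does} change $\nabla f$ as a rational map when the irreducible factors occur with different multiplicities (for $f=\theta_1^2\theta_2$ the gradient map is $(2\theta_2:\theta_1)$, while for $\theta_1\theta_2$ it is $(\theta_2:\theta_1)$), and it can change the multidegree. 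Fortunately your main argument never needs this reduction --- Euler's identity $\sum_i\theta_i\,\partial f/\partial\theta_i=(\deg f)f$ and the irreducibility of $X_f$ hold for an arbitrary homogeneous $f$ --- so you should simply delete that paragraph rather than rely on it.
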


In the following, we provide a partial result concerning the
set-theoretic image, here denoted $(\nabla f)(\mathcal{L})$.
This image is a dense subset of $\mathcal{L}^{\nabla f}$,
but in general they may differ.

\begin{proposition}
The image $(\nabla f)(\mathcal{L})$
 is disjoint from $\mathcal{L}^\perp$ if and only if $\mathcal{L}$ is not contained in
 the hyperplane tangent
to the hypersurface
 $\{f=0\}$ at a smooth point belonging to $\mathcal{L}$.
\end{proposition}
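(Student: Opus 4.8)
The plan is to translate both sides of the asserted equivalence into one condition on a point $\theta_0\in\mathcal{L}$ and then match them. Recall that in the projective setting of this section $\mathcal{L}^\perp$ is the center of the projection $\pi_\mathcal{L}$; concretely it is the linear subspace of $(\C \PP^{d-1},\sigma)$ cut out by the equations $\sum_{i=1}^d v_i\sigma_i=0$, one for each $v$ in $\mathcal{L}$ (the rows of $L$ span $\mathcal{L}$, so $\mathcal{L}^\perp=\ker L$ in the standard coordinates). Hence, for a point $\theta_0\in\mathcal{L}$ at which the gradient map is defined, i.e.\ with $\nabla f(\theta_0)\neq\mathbf{0}$, the point $\sigma_0=\nabla f(\theta_0)\in(\nabla f)(\mathcal{L})$ lies in $\mathcal{L}^\perp$ if and only if $\sum_{i=1}^d \frac{\partial f}{\partial\theta_i}(\theta_0)\,v_i=0$ for every $v\in\mathcal{L}$.

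The key observation is that the linear form $v\mapsto \sum_i \frac{\partial f}{\partial\theta_i}(\theta_0)\,v_i$ is precisely the equation of the projective tangent hyperplane to $\{f=0\}$ at $\theta_0$, \emph{provided} $\theta_0$ is a smooth point of the hypersurface, that is, $f(\theta_0)=0$ and $\nabla f(\theta_0)\neq\mathbf{0}$. So the displayed condition says exactly that $\mathcal{L}$ is contained in that tangent hyperplane. This already yields one direction: if $\mathcal{L}$ lies in the tangent hyperplane to $\{f=0\}$ at a smooth point $\theta_0\in\mathcal{L}$, then $\nabla f(\theta_0)\neq\mathbf{0}$ gives a point of $(\nabla f)(\mathcal{L})$ that lies in $\mathcal{L}^\perp$.

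For the converse I would use Euler's identity to show the smoothness hypothesis is automatic. Suppose $(\nabla f)(\mathcal{L})$ meets $\mathcal{L}^\perp$, witnessed by $\theta_0\in\mathcal{L}$ with $\nabla f(\theta_0)\neq\mathbf{0}$ and $\sum_i \frac{\partial f}{\partial\theta_i}(\theta_0)\,v_i=0$ for all $v\in\mathcal{L}$. Taking $v=\theta_0\in\mathcal{L}$ and invoking $\sum_i \theta_{0,i}\,\frac{\partial f}{\partial\theta_i}(\theta_0)=(\deg f)\cdot f(\theta_0)$, with $\deg f\geq 1$ since $f$ is hyperbolic, forces $f(\theta_0)=0$. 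Thus $\theta_0$ is a smooth point of $\{f=0\}$ lying on $\mathcal{L}$, and by the previous paragraph $\mathcal{L}$ sits inside its tangent hyperplane. Combining the two implications, $(\nabla f)(\mathcal{L})$ meets $\mathcal{L}^\perp$ exactly when there is a smooth point of $\{f=0\}$ on $\mathcal{L}$ whose tangent hyperplane contains $\mathcal{L}$; negating both sides gives the statement.

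I do not expect a genuine obstacle here; the hard part is just the bookkeeping between the $\theta$-space and the dual $\sigma$-space, so that ``$\nabla f(\theta_0)\in\mathcal{L}^\perp$'' and ``$\mathcal{L}\subseteq$ tangent hyperplane at $\theta_0$'' are recognized as literally the same system of linear equations, together with the Euler-relation remark that any witness $\theta_0$ automatically satisfies $f(\theta_0)=0$, so that ``smooth point of $\{f=0\}$'' adds no restriction. I would also record the harmless degenerate case in which $\mathcal{L}$ is entirely contained in the singular locus of $\{f=0\}$: then $(\nabla f)(\mathcal{L})=\emptyset$ and there is no smooth point of $\{f=0\}$ on $\mathcal{L}$, so both sides of the equivalence hold vacuously.
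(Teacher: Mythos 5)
Your proof is correct and follows essentially the same route as the paper: identify the condition $\nabla f(\theta_0)\in\mathcal{L}^\perp$ with containment of $\mathcal{L}$ in the hyperplane $(\nabla f(\theta_0))^\perp$, and use Euler's identity (which the paper invokes implicitly via the equivalence $\sum_i\tau_i\,\partial f/\partial\theta_i(\tau)=0\iff f(\tau)=0$) to show that any witness point automatically lies on $\{f=0\}$ and is hence a smooth point of the hypersurface. Your explicit treatment of the degenerate case $\mathcal{L}\subseteq\mathrm{Sing}\{f=0\}$ is a harmless addition not spelled out in the paper.
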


\begin{proof}
For any $\tau\in \C \PP^{d-1}$ that does not belong to the singular locus of
$\{f=0\}$, we have
$$\sum_i \tau_i \frac{\partial f}{\partial \theta_i}(\tau)=
0\quad\Longleftrightarrow\quad f(\tau)=0.$$
Writing $F = \nabla f$, we have
$$ F(\tau)\in \tau^\perp\quad\Longleftrightarrow\quad f(\tau)=0.$$
To prove the `only if' direction,
consider a smooth point $\tau$ in $\{f=0\}$. The tangent hyperplane
at $\tau$ is $F(\tau)^\perp$.
If $\tau$ lies in $\mathcal{L}$ and
 $\mathcal{L} $ is contained in $ F(\tau)^\perp$,  then
 $F(\tau)\in \mathcal{L}^\perp\cap F(\mathcal{L})$. For the reverse direction let $\sigma\in
 F(\mathcal{L})\cap \mathcal{L}^\perp$. Then $\sigma=F(\tau)$ for some $\tau\in \mathcal{L}$. Hence, $F(\tau)\in \tau^\perp$ and therefore $f(\tau)=0$. This means that $\mathcal{L}$ is contained in the
 tangent hyperplane to $\{f = 0\}$ at $\tau$.
\end{proof}

This proposition implies that the image $(\nabla f)(\mathcal{L})$ is disjoint from $\mathcal{L}^\perp$ if and only if the singular locus of the intersection $\mathcal{L} \cap \{f = 0\}$
is contained in the singular locus of $\{f=0\}$. This is true for generic $\mathcal{L}$ by Bertini's Theorem.
The following corollary identifies cases when
the gradient map $\nabla f $ is regular on $\mathcal{L}$.
In such cases, the set-theoretic image $\nabla(f)(\mathcal{L})$ will be closed
and hence equal to $\mathcal{L}^{\nabla f}$,
so we can conclude that it is generically disjoint from $\mathcal{L}^\perp$.

\begin{corollary}
If the subspace $\mathcal{L}$ is generic of dimension strictly smaller than the codimension
of the singular locus of  $\{f=0\}$, then the ML degree equals the degree of $\mathcal{L}^{\nabla f}$.
\end{corollary}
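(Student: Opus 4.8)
The plan is to combine the preceding Proposition with the fact that the gradient map $\nabla f$, while only rational on all of $\C\PP^{d-1}$, becomes a genuine morphism when restricted to $\mathcal{L}$ under the stated dimension hypothesis. First I would recall that the base locus of the rational map $F = \nabla f : \C\PP^{d-1}\dashrightarrow\C\PP^{d-1}$ is precisely the common zero set of the partials $\partial f/\partial\theta_1,\dots,\partial f/\partial\theta_d$, which (for $f$ of degree $m\geq 1$) coincides with the singular locus $\Sigma$ of the hypersurface $\{f=0\}$ by the Euler relation. If $\mathcal{L}$ is generic of dimension $c-1$ (as a projective subspace) with $c-1 < \mathrm{codim}(\Sigma)$, then a generic linear subspace of that dimension misses $\Sigma$ entirely, so $\mathcal{L}\cap\Sigma=\emptyset$. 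Hence $F$ restricts to a morphism $\mathcal{L}\to\C\PP^{d-1}$ defined at every point of $\mathcal{L}$.

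Next I would observe that, because $\mathcal{L}$ is projective (hence complete) and $F|_\mathcal{L}$ is a morphism, the set-theoretic image $(\nabla f)(\mathcal{L})$ is closed in $\C\PP^{d-1}$. Therefore it already equals its Zariski closure, which is by definition the exponential variety $\mathcal{L}^{\nabla f}$. In particular there is no discrepancy between the set-theoretic image and the variety, and we may freely replace one by the other in what follows.

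Then I would invoke the preceding Proposition together with its corollary (the one phrased via Bertini's Theorem): the image $(\nabla f)(\mathcal{L})$ is disjoint from $\mathcal{L}^\perp$ if and only if the singular locus of the hyperplane section $\mathcal{L}\cap\{f=0\}$ is contained in $\Sigma$. For generic $\mathcal{L}$ of the given dimension, $\mathcal{L}\cap\{f=0\}$ is smooth away from $\mathcal{L}\cap\Sigma$ by Bertini, and under our dimension bound $\mathcal{L}\cap\Sigma=\emptyset$, so the section is smooth and its singular locus (empty) is trivially contained in $\Sigma$. Thus $\mathcal{L}^{\nabla f}\cap\mathcal{L}^\perp = (\nabla f)(\mathcal{L})\cap\mathcal{L}^\perp = \emptyset$. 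Finally, by the characterization of equality in Theorem~\ref{eq:twoinequalities1}, the condition $\mathcal{L}^{\nabla f}\cap\mathcal{L}^\perp=\emptyset$ is exactly what is needed to conclude $\mathrm{MLdegree}(\mathcal{L}^{\nabla f}) = \mathrm{degree}(\mathcal{L}^{\nabla f})$.

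The main obstacle I anticipate is bookkeeping around the dimension conventions: the statement mixes the affine cone picture ($\mathcal{L}\subset\R^d$) with the projective picture, so one must be careful that ``dimension of $\mathcal{L}$'' in the corollary is read consistently with ``codimension of the singular locus of $\{f=0\}$'' in the same ambient projective space, and that the genericity of $\mathcal{L}$ is strong enough to simultaneously guarantee $\mathcal{L}\cap\Sigma=\emptyset$ and the Bertini smoothness of $\mathcal{L}\cap\{f=0\}$. Both are open dense conditions, so a generic $\mathcal{L}$ satisfies them at once; the only subtlety is making sure the strict inequality $\dim\mathcal{L}<\mathrm{codim}\,\Sigma$ is what forces $\mathcal{L}\cap\Sigma=\emptyset$ for generic (indeed for all sufficiently general) $\mathcal{L}$, which is the standard dimension count for intersections with generic linear subspaces.
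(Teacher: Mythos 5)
Your proposal is correct and follows essentially the same route as the paper: the paper's (implicit) argument is precisely that a generic $\mathcal{L}$ of dimension below the codimension of the singular locus misses the base locus of $\nabla f$, so the restriction is a morphism with closed image equal to $\mathcal{L}^{\nabla f}$, which by the preceding Proposition and Bertini is disjoint from $\mathcal{L}^\perp$, whence equality holds in the left inequality of Theorem~\ref{eq:twoinequalities1}. Your attention to the dimension conventions and to the identification of the base locus with the singular locus via the Euler relation matches the paper's reasoning.
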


\section{Elementary Symmetric Polynomials}
\label{sec:ele_sym_pol}

Elementary symmetric polynomials form an important class of 
hyperbolic polynomials \cite{BrandenES}.
In this section, we study the exponential varieties
obtained by slicing their
hyperbolicity cones by a generic $\mathcal{L}$.
Our main result is the formula for the gradient
multidegree in Theorem~\ref{thm:mainelementary}.

We write $E_m$ for the elementary symmetric polynomial
of degree $m$ in the $d$ unknowns $\theta_1,\dots,\theta_d$ , and we consider its
gradient map $\,\nabla E_m:\C\PP^{d-1}\dashrightarrow \C\PP^{d-1}$.
The $i$-th coordinate of $\nabla E_m$ is denoted  $E_{m}^i=\partial_{\theta_i} E_m$.
This is  the elementary symmetric polynomial
of degree $m-1$ in the  $d-1$ unknowns
   $\theta_1,\dots,\theta_{i-1},  \theta_{i+1},\dots,\theta_d$.
  The hyperbolicity cone of $E_m$ equals
$$ C \,\, = \,\, \bigl\{\,\theta \in \R^d: \,E_j(\theta) > 0 \,\,
\hbox{for} \,\,j = 1,2,\ldots,m \,\bigr\}. $$
This defines a hyperbolic exponential family as in Section \ref{sec:dis_gau_hyp}.
The geometry of this family (for $d=4,m=3$) was studied
in Example~\ref{ex:E_3}.
We shall return to this at the end of this section.
See Example \ref{ex:RieszE2} and its pointer to \cite{sokal}
for some information on its Riesz kernel.

The exponential family discussed here
is not an instance of Example \ref{ex:lineargaussian}.
Indeed, by  \cite[Example 5.10]{KPV},
the elementary symmetric polynomial $E_m$
is not a symmetric determinant
whose entries are linear forms, provided $2 \leq m \leq d-2$.
Hence our model in this section is not a linear Gaussian concentration
model. In particular, the theory in  \cite{stuhl} is not applicable.
However, a multiple of $E_m$ is a 
 symmetric determinant with linear entries, as shown in \cite{BrandenES}.

\begin{definition} \rm
The {\em Eulerian number} $A(r,s)$ is the number of permutations of
$\{1,2,\ldots,r\}$ with precisely $s$ ascents.
We have $A(r,0) = A(r,r-1) = 1$ and the following recursion holds:
$$A(r,s) \,\,\,= \,\,\,(r-s) \cdot A(r-1,s-1) \,+\, (s+1)\cdot A(r-1,s) \qquad \hbox{for} \,\,\,1 \leq s \leq r-2.  $$
The Eulerian numbers for small $r$ are
$\,A(3,\bullet) = 1,4,1$,
$ \,A(4, \bullet) = 1,11,11,1$,
$\,A (5,\bullet) = 1,26,66,26,1$,
$\,\,A (6,\bullet) = 1,57,302,302,57,1$, \ and
$\,A(7,\bullet)  = 1,120,1191,2416,1191,120,1$.
\end{definition}

The gradient multidegree of $E_m$ is given by
a  formula in terms of Eulerian numbers:

\begin{theorem}\label{thm:mainelementary}
The gradient multidegree of the elementary symmetric polynomial equals
$$ \qquad \qquad [X_{E_m}] \,\,\, = \,\,\, \sum_{i=1}^d \alpha_i \, s^{i-1}\,t^{d-i},
\qquad \qquad \hbox{where} $$ 
$$
\alpha_i=
\begin{cases}
(m-1)^{i-1}&\text{ for }i<d-m+3\\
\sum_{j=0}^{d-m} (d-m+1-j){{d-1-j}\choose {d-i}}(m-1)^j A(i-2-j,m-d+i-3)&\text{ for }i\geq d-m+3.
\end{cases}
$$
Moreover, the gradient degree of $E_m$ is just the Eulerian number $\,\alpha_d = A(d-1,m-2)$.
\end{theorem}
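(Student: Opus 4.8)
The plan is to derive the final statement as the $i=d$ specialization of the gradient multidegree formula in the first part of Theorem~\ref{thm:mainelementary}. For $m\ge 3$ the index $i=d$ satisfies $d\ge d-m+3$, so it lands in the second branch of the formula; there $\binom{d-1-j}{d-i}=\binom{d-1-j}{0}=1$ and $A(i-2-j,\,m-d+i-3)=A(d-2-j,\,m-3)$, so the formula collapses to
\[ \alpha_d\;=\;\sum_{j=0}^{d-m}(d-m+1-j)(m-1)^j\,A(d-2-j,\,m-3). \]
Everything then reduces to the purely combinatorial identity
\[ A(d-1,\,m-2)\;=\;\sum_{j=0}^{d-m}(d-m+1-j)(m-1)^j\,A(d-2-j,\,m-3). \]

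I would prove this identity by induction on $d$ with $m$ fixed, using only the defining Eulerian recursion $A(n,k)=(k+1)A(n-1,k)+(n-k)A(n-1,k-1)$ from the Definition above. With $n=d-1$ and $k=m-2$, the base case $d=m$ is immediate: both sides equal $A(m-1,m-2)=1$ (the right-hand sum has the single term $A(m-2,m-3)=1$). For the step, write $A(d-1,m-2)=(m-1)A(d-2,m-2)+(d-m+1)A(d-2,m-3)$, insert the inductive expansion of $A(d-2,m-2)$, absorb the factor $m-1$ by the reindexing $j\mapsto j+1$, and note that the residual term $(d-m+1)A(d-2,m-3)$ is exactly the $j=0$ summand of the target sum; this reproduces the claimed expression for $\alpha_d$. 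The remaining values $m\le 2$ (where $i=d$ lies in the first branch, so $\alpha_d=(m-1)^{d-1}$) are checked by hand: $(m-1)^{d-1}$ equals $A(d-1,0)=1$ for $m=2$ and $A(d-1,-1)=0$ for $m=1$, while $m>d$ is vacuous since then $E_m\equiv 0$.

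On this route the final statement carries essentially no difficulty of its own; the real obstacle is the general formula, i.e.\ the body of Theorem~\ref{thm:mainelementary}. There one must compute the bidegree of the graph of $\nabla E_m$ by counting the $\theta$ in a generic $\C\PP^{i-1}$ whose gradient image meets a generic $\C\PP^{d-i}$; the naive value $(m-1)^{i-1}$ (the degree of a complete intersection of $i-1$ general members of the linear system $\langle\partial_1 E_m,\dots,\partial_d E_m\rangle$) is correct for $i<d-m+3$ but overcounts for larger $i$, because the base locus of that system — namely $\mathrm{Sing}\{E_m=0\}$ — is met in excess and the contribution of each of its (combinatorially indexed) components must be subtracted via an excess-intersection computation, and it is there that the Eulerian recursion materialises. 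Their appearance can be traced to the fact that a generic member of $\langle\partial_i E_m\rangle$ has Newton polytope the hypersimplex $\Delta_{m-1,d}$, whose normalized volume is $A(d-1,m-2)$; an alternative, more direct attack on $\alpha_d$ alone would try to show that the base-locus corrections along the coordinate facets of this hypersimplex cancel, leaving just the normalized volume — with the bookkeeping of those boundary contributions being the delicate step.
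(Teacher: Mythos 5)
Your proposal proves only a small corner of the theorem. The reduction of the ``Moreover'' clause to the identity
$A(d-1,m-2)=\sum_{j=0}^{d-m}(d-m+1-j)(m-1)^{j}A(d-2-j,m-3)$ is correct, and your induction on $d$ using the Eulerian recursion does verify it (I checked the base case and the reindexing $j\mapsto j+1$; the numerics also confirm it, e.g.\ $d=7$, $m=4$ gives $104+99+72+27=302=A(6,2)$). But the body of the theorem --- the formula for every $\alpha_i$ --- is the actual content, and you explicitly leave it unproved: ``the contribution of each of its components must be subtracted via an excess-intersection computation\dots with the bookkeeping of those boundary contributions being the delicate step'' is a description of the obstacle, not a resolution of it. For $i\ge d-m+3$ the base locus of the linear system $\langle \partial_1 E_m,\dots,\partial_d E_m\rangle$ is the whole arrangement of $\binom{d}{m-2}$ coordinate $(m-3)$-planes, the scheme it cuts out is nonreduced for $m\ge 4$ (Corollary~\ref{cor:singloc}), and the components intersect one another; a direct excess-intersection bookkeeping along this locus is genuinely hard and is precisely what the paper avoids.

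The paper's route is different and is the step you are missing: factor $\nabla E_m=\pi\circ f$ through the toric variety $X_{\Delta_{m-1}}$ of the hypersimplex (the map $f$ given by all squarefree monomials of degree $m-1$), prove that the induced linear system is base point free there --- this is Lemma~\ref{lem:piwelldefined}, whose inductive argument on the torus orbits is the real geometric input --- and then pull everything back to $X_P$ with $P=\Delta_{m-1}+\Delta_1$ so that Bernstein's theorem identifies $\alpha_i$ with the mixed volume of $i-1$ copies of $\Delta_{m-1}$ and $d-i$ copies of $\Delta_1$. The closed combinatorial formula is then not something you re-derive from an intersection computation at all: it is Croitoru's evaluation of mixed Eulerian numbers (Lemma~\ref{lem:mixedeuler}), and the ``Moreover'' clause falls out as $\alpha_d=\mathrm{vol}(\Delta_{m-1})=A(d-1,m-2)$ by Laplace's classical result, with no need for your inductive identity. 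Your identity is a fine independent consistency check of the second branch at $i=d$, but as a proof of the theorem the proposal is incomplete: it assumes the general formula in order to derive its own last line.
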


For example, the formula for $d=6,m=3$ is in (\ref{eq:E36}).
For $d=7$ we find
\begin{equation}
\label{eq:d7ex}
 \begin{matrix}
[X_{E_2}] & = & 1 s^6 \,+\,  1 s^5 t \,+\, 1  s^4 t^2  \,+\, 1 s^3 t^3
 \,+\,  1 s^2 t^4 \,+\, 1  s t^5 \,+\, 1 t^6 ,\\
[X_{E_3}] & = & {\bf 57} s^6 \,+\,  32 s^5 t \,+\, 16  s^4 t^2  \,+\, 8 s^3 t^3
 \,+\,  4 s^2 t^4 \,+\, 2  s t^5 \,+\, 1 t^6 ,\\
[X_{E_4}] & = & 302s^6 \,+\,  {\bf 222} s^5 t \,+\, 81  s^4 t^2  \,+\, 27 s^3 t^3
\,+\,  9 s^2 t^4 \,+\, 3 s t^5 \,+\, 1 t^6 ,\\
[X_{E_5}] & = & 302s^6\,+\,422s^5t\,+\,{\bf 221}s^4t^2\,+
\, 64s^3t^3 \,+ \,16s^2t^4 \,+ \, 4st^5 \, + \,t^6, \\
[X_{E_6}] & = & 57 s^6 \,+\,  157 s^5 t \,+\, 170  s^4 t^2  \,+\, {\bf 90} s^3 t^3
 \,+\,  25 s^2 t^4 \,+\, 5  s t^5 \,+\, 1 t^6 ,\\
[X_{E_7}] & = & 1 s^6 \,+\,  6 s^5 t \,+\, 15  s^4 t^2  \,+\, 20 s^3 t^3
 \,+\,  {\bf 15} s^2 t^4 \,+\, 6  s t^5 \,+\, 1 t^6 .\\
\end{matrix}
\end{equation}

The numbers $\alpha_i$ in Theorem~\ref{thm:mainelementary}
are instances of the {\em mixed Eulerian numbers}. We use the following
geometric interpretation of these numbers.
The {\em $k$-th hypersimplex} is the polytope
$$ \Delta_k \,= \, {\rm conv}
\bigl\{ {\bf e}_{i_1} + {\bf e}_{i_2} + \cdots + {\bf e}_{i_d}\,:\,
1 \leq i_{1}<\cdots<i_{k}\leq d \bigr\} \quad \subset \,\,\, \R^d.
$$
This is a $(d-1)$-dimensional polytope with $\binom{d}{k}$
vertices. For instance, $\Delta_1$ is a $(d-1)$-simplex.
For $s,t > 0$, the Minkowski sum
$ s \Delta_k + t \Delta_1 $ is a polytope of dimension $d-1$.
We consider the volume of $ s \Delta_k + t \Delta_1 $
with respect to the normalized Lebesgue measure that is defined by
 ${\rm vol}(\Delta_1) = 1$.
 This volume is a  homogeneous polynomial  in $s$ and $t$ of degree $d-1$.

\begin{lemma} \label{lem:mixedeuler}
The combinatorial numbers $\alpha_i$ in
Theorem \ref{thm:mainelementary} satisfy
$$ {\rm vol}(s \Delta_{m-1} + t \Delta_1) \,\,\, = \,\,\, \sum_{i=1}^d \alpha_i
\binom{d-1}{i-1}
s^{i-1} t^{d-i} . $$
Equivalently,
$\alpha_i$ equals the {\em mixed volume} of the polytopes
$(\Delta_{m-1},\dots,\Delta_{m-1},\Delta_1,\dots,\Delta_1)$,
 where  the
 hypersimplex $\Delta_{m-1}$ appears $i-1$ times and
 the simplex $\Delta_1$ appears $d-i$ times.
\end{lemma}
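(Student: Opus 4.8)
The plan is to identify the gradient map $\nabla E_m$ with a monomial-type correspondence and then invoke the standard dictionary between degrees of such maps and mixed volumes of Newton polytopes. First I would recall the key structural fact about elementary symmetric polynomials: the partial derivative $E_m^i = \partial_{\theta_i} E_m$ is, up to the fact that it omits the variable $\theta_i$, again an elementary symmetric polynomial of degree $m-1$. The crucial observation is that the Newton polytope of each coordinate $E_m^i$ is a facet of the hypersimplex $\Delta_{m-1}$ — namely the face where the $i$-th coordinate equals zero — and more importantly, after clearing the common factor, the map $\nabla E_m$ can be analyzed via the $(m-1)$-st ``graded'' piece. The idea is that the graph $X_{E_m} \subset \C\PP^{d-1}\times\C\PP^{d-1}$, whose class is computed by intersecting with generic linear spaces $L_{i-1}\times M_{d-i}$, counts points $(\theta,\sigma)$ with $\sigma_j = E_m^j(\theta)$, $\theta\in L_{i-1}$, $\sigma\in M_{d-i}$.

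Next I would set up the count $\alpha_i = \#(X_{E_m}\cap(L_{i-1}\times M_{d-i}))$ as a Bernstein–Kushnirenko-type intersection number. Fixing $\sigma$ to lie in a generic codimension-$(i-1)$ plane imposes $i-1$ generic linear conditions $\sum_j c_{kj}E_m^j(\theta) = 0$ on $\theta$; restricting $\theta$ to a generic plane $L_{i-1}$ of dimension $i-1$ imposes $d-i$ more generic linear conditions on the homogeneous coordinates $\theta$. So $\alpha_i$ equals the number of solutions in $\PP^{d-1}$ of $i-1$ generic linear combinations of the $E_m^j$ together with $d-i$ generic linear forms in $\theta$. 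By the toric Bézout theorem, this is the mixed volume of the polytopes $(\mathrm{N}(E_m^j\text{-span}), \dots, \mathrm{N}(E_m^j\text{-span}), \Delta_1,\dots,\Delta_1)$ with $i-1$ copies of the first and $d-i$ copies of the simplex. The point is that a generic linear combination $\sum_j c_j E_m^j$ has Newton polytope exactly $\Delta_{m-1}$ (the convex hull of all the $\mathrm{N}(E_m^j)$, since each $E_m^j$ contributes the facet $\{x_j=0\}$ of $\Delta_{m-1}$ and these facets together span $\Delta_{m-1}$). Hence $\alpha_i$ is the mixed volume of $i-1$ copies of $\Delta_{m-1}$ and $d-i$ copies of $\Delta_1$, which is exactly the claimed identity, and the volume formula $\mathrm{vol}(s\Delta_{m-1}+t\Delta_1) = \sum_i \alpha_i \binom{d-1}{i-1}s^{i-1}t^{d-i}$ follows by the standard polarization identity for mixed volumes (with the normalization $\mathrm{vol}(\Delta_1)=1$, so that the coefficient of $s^{i-1}t^{d-i}$ in the volume picks up the binomial coefficient $\binom{d-1}{i-1}$ times the mixed volume).

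The main obstacle I anticipate is justifying rigorously that no ``spurious'' solutions at toric infinity or on coordinate hyperplanes inflate the Bernstein count — i.e., that the Bernstein bound is actually attained and that the intersection with $X_{E_m}$ (the Zariski closure of the graph over the smooth locus of $\{E_m=0\}$) does not pick up extra components supported where several $\theta_i$ vanish simultaneously. One must check that for generic choices of the linear data the solutions all lie in the dense torus (or at least in the locus where the graph is well-defined), so that the toric intersection number genuinely equals $\alpha_i$; this is where the hyperbolicity of $E_m$ and the combinatorial structure of the hypersimplex facets (each $E_m^j$ vanishing precisely on $\{\theta_j=0\}$ together with lower strata) must be used carefully. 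A secondary check is that the span of $\{E_m^1,\dots,E_m^d\}$ is base-point-free away from the relevant locus, so that imposing generic linear combinations behaves as expected. Once these genericity statements are in place, the identification of $\alpha_i$ with the mixed volume, and hence with the coefficient in $\mathrm{vol}(s\Delta_{m-1}+t\Delta_1)$, is immediate from Bernstein's theorem and the definition of mixed volume.
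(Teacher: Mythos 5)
Your proposal proves a different statement from the one the lemma actually makes, and substituting it for the paper's proof would render the overall argument circular. In Lemma \ref{lem:mixedeuler} the symbols $\alpha_i$ refer to the \emph{combinatorial numbers} defined by the explicit formula in Theorem \ref{thm:mainelementary}, namely $\alpha_i=(m-1)^{i-1}$ for $i<d-m+3$ and
$$\alpha_i \,=\, \sum_{j=0}^{d-m} (d-m+1-j)\binom{d-1-j}{d-i}(m-1)^j A(i-2-j,\,m-d+i-3) \qquad \hbox{for } i\ge d-m+3.$$
The content of the lemma is the purely combinatorial identity that these closed-form expressions equal the mixed volumes of $i-1$ copies of $\Delta_{m-1}$ and $d-i$ copies of $\Delta_1$; the paper proves this by checking that the displayed sum satisfies Croitoru's recursion and initial conditions for mixed Eulerian numbers. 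The geometric statement you argue for --- that the coefficients of the gradient multidegree $[X_{E_m}]$ are these mixed volumes --- is precisely what the subsequent proof of Theorem \ref{thm:mainelementary} establishes, \emph{using} Lemma \ref{lem:mixedeuler} as its combinatorial input. Your argument never touches the Eulerian-number formula at all, so the identity the lemma asserts is left entirely unproved; if your text replaced the lemma's proof, the paper would show ``multidegree coefficient $=$ mixed volume'' twice and never connect either quantity to the explicit formula.

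Separately, even read as a proof of the geometric statement, your Bernstein--Kushnirenko argument has a real gap that you flag but do not close. For $m\ge 4$ the base locus of the linear system spanned by $E_m^1,\dots,E_m^d$ is the positive-dimensional singular locus of $\{E_m=0\}$ (Corollary \ref{cor:singloc}), so $i-1$ generic members of that system do \emph{not} meet $d-i$ generic hyperplanes properly in $\C\PP^{d-1}$ once $i\ge d-m+3$; this is exactly the excess-intersection phenomenon the paper discusses after the theorem. The resolution is not genericity in $\C\PP^{d-1}$ but the passage to the toric variety $X_P$ for $P=\Delta_{m-1}+\Delta_1$, where the pulled-back linear systems are base-point free by Lemma \ref{lem:piwelldefined} (proved by an induction on $m$ carried out over every torus orbit), which forces the relevant intersection into the dense torus where Bernstein's theorem applies. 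Without that step, the asserted equality between the intersection number $\alpha_i$ and the mixed volume is unjustified precisely in the range of $i$ where the two sides of the lemma's case distinction differ.
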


\begin{proof}
For $i=d$ this implies ${\rm vol}(\Delta_k) = A(d-1,k-1)$.
In words, the normalized volume of the hypersimplex
is the Eulerian number. This is a classical result due to Laplace.
   Mixed volumes of hypersimplices were studied recently in
\cite{Croitoru,Postnikov}.
The formula we need is that the mixed volume of
$i-1$ copies of $\Delta_{m-1}$ and $d-i$ copies of $\Delta_1$ for $i\geq d-m+3$~equals
 \begin{equation}
 \label{eq:altsum} \sum_{j=0}^{d-m} (d-m+1-j){{d-1-j}\choose {d-i}}(m-1)^j A(i-2-j,m-d+i-3),
 \end{equation}
where $\Delta_{m-1}$ appears $i-1$ times and $\Delta_1$ appears $d-i$ times.
Croitoru gave a recursive formula for mixed Eulerian numbers in
 \cite[Theorem 2.4.6]{Croitoru}. One checks that the sum in
  (\ref{eq:altsum}) satisfies both,
 Croitoru's recursion and initial conditions. This proves the lemma.
 \end{proof}

Thus, the content of  Theorem \ref{thm:mainelementary} is
that the coefficients of the gradient multidegree $[X_{E_m}]$
are the mixed volumes in Lemma \ref{lem:mixedeuler}.
We shall prove this using toric geometry \cite{FultonTV}.


We decompose the gradient map as $\nabla E_m=\pi\circ f$,
where  $f:\C\PP^{d-1}\dashrightarrow \C\PP^{{d\choose {m-1}}-1}$ is the map
given by all square-free monomials of degree $m-1$, and
$\pi:\C\PP^{{d\choose {m-1}}-1}\dashrightarrow \C\PP^{d-1}$ is the
linear projection given by summing the monomials in $\nabla E_m = (E^1_m,E^2_m,\ldots,E^d_m)$.
The closure of the image of $f$ is the toric variety $X_{\Delta_{m-1}}$ associated to the
hypersimplex $\Delta_{m-1}$.

\begin{lemma}\label{lem:piwelldefined}
The linear projection $\pi$ restricts to a regular map $\,X_{\Delta_{m-1}} \rightarrow \C \PP^{d-1}$.
\end{lemma}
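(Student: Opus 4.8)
The plan is to reduce the statement to a disjointness claim and then to an elementary manipulation of elementary symmetric polynomials. The indeterminacy locus of the linear projection $\pi$ is the linear subspace $Z=\{x:\ell_1(x)=\cdots=\ell_d(x)=0\}$ of $\C\PP^{\binom{d}{m-1}-1}$, where $\ell_i(x)=\sum_{|S|=m-1,\ i\notin S}x_S$ is the linear form that $\pi$ sends to the $i$-th coordinate; this matches $\nabla E_m=\pi\circ f$, since $E_m^i$ is $E_{m-1}$ in the variables $\theta_j$ with $j\neq i$. A linear projection restricts to a morphism on any closed subvariety disjoint from its center, so it is enough to prove $X_{\Delta_{m-1}}\cap Z=\emptyset$.

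First I would describe the points of $X_{\Delta_{m-1}}$ via the orbit--face correspondence for projective toric varieties (see, e.g., \cite{FultonTV}). Since the lattice points of $\Delta_{m-1}=\{x\in[0,1]^d:\sum x_i=m-1\}$ are exactly its vertices, every point $p\in X_{\Delta_{m-1}}$ lies in the torus orbit attached to a unique face $Q$ of $\Delta_{m-1}$, and on that orbit one has $x_S(p)=\prod_{j\in S}t_j$ for $S$ a vertex of $Q$ and $x_S(p)=0$ otherwise, for some $t\in(\C^*)^d$ (concretely, $p$ is a limit of open-orbit points along a one-parameter subgroup degenerating the dense torus to the subtoric variety of $Q$). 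The faces of $\Delta_{m-1}$ are the sets $Q_{I,J}$ obtained by imposing $x_i=0$ for $i\in I$ and $x_i=1$ for $i\in J$, with $I,J$ disjoint, $|J|\le m-1$ and $|I|\le d-m+1$; the vertices of $Q_{I,J}$ are the $(m-1)$-subsets $S$ with $J\subseteq S\subseteq[d]\setminus I$.

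Now fix such a $p$ with face $Q_{I,J}$ and set $k=m-1-|J|$, $V=[d]\setminus(I\cup J)$, $n=|V|$, $\tau=(t_j)_{j\in V}\in(\C^*)^n$. A direct count of vertices gives $\ell_i(p)=0$ for $i\in J$, $\ell_i(p)=\bigl(\prod_{j\in J}t_j\bigr)E_k(\tau)$ for $i\in I$, and $\ell_i(p)=\bigl(\prod_{j\in J}t_j\bigr)E_k\bigl((\tau_j)_{j\in V\setminus\{i\}}\bigr)$ for $i\in V$. If $k=0$ or $k=n$ then $Q_{I,J}$ is a single vertex, and $\sum_i\ell_i(p)=(d-m+1)\sum_S x_S(p)=(d-m+1)\prod_{j\in J}t_j\neq 0$, so $p\notin Z$. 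Otherwise $1\le k\le n-1$, and if $\ell_i(p)=0$ for all $i$ then in particular $E_k\bigl((\tau_j)_{j\in V\setminus\{i\}}\bigr)=0$ for every $i\in V$. I would derive a contradiction from the two identities $\sum_{i\in V}E_k\bigl((\tau_j)_{j\in V\setminus\{i\}}\bigr)=(n-k)\,E_k(\tau)$ and $E_k(\tau)=E_k\bigl((\tau_j)_{j\in V\setminus\{i\}}\bigr)+\tau_i\,E_{k-1}\bigl((\tau_j)_{j\in V\setminus\{i\}}\bigr)$: the first forces $E_k(\tau)=0$ because $n-k\ge1$, the second then forces $E_{k-1}\bigl((\tau_j)_{j\in V\setminus\{i\}}\bigr)=0$ for all $i$ because $\tau_i\neq0$, and iterating this descent (each step valid while the degree is still at least $1$) ends at $E_0\bigl((\tau_j)_{j\in V\setminus\{i\}}\bigr)=1=0$, a contradiction. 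Hence $X_{\Delta_{m-1}}\cap Z=\emptyset$, so $\pi$ restricts to a regular map $X_{\Delta_{m-1}}\to\C\PP^{d-1}$.

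I expect the only real content to be the orbit--face description of $X_{\Delta_{m-1}}$; after that everything is bookkeeping. The one point that needs care is recognising that the faces on which the elementary-symmetric descent would stall ($k\in\{0,n\}$) are precisely the $0$-dimensional ones, which are handled instead by the trivial relation $\sum_i\ell_i=(d-m+1)\sum_S x_S$.
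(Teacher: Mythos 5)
Your proof is correct and takes essentially the same route as the paper's: decompose $X_{\Delta_{m-1}}$ into torus orbits indexed by the faces of the hypersimplex and, on each orbit, derive a contradiction from the two identities $\sum_{i}E_k\bigl((\tau_j)_{j\neq i}\bigr)=(n-k)E_k(\tau)$ and $E_k(\tau)=E_k\bigl((\tau_j)_{j\neq i}\bigr)+\tau_i E_{k-1}\bigl((\tau_j)_{j\neq i}\bigr)$, which is exactly the paper's induction on $m$ rephrased as a descent on the degree $k$. Your explicit handling of the zero-dimensional faces via $\sum_i \ell_i=(d-m+1)\sum_S x_S$ just fills in a degenerate case that the paper dispatches with ``just like in the first paragraph above.''
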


\begin{proof}
We first argue that $\pi$ has no base points on the dense torus
of $ X_{\Delta_{m-1}}$. Equivalently,
the homogeneous equations
$\,E_m^1 (\theta) = E_m^2(\theta) = \cdots = E_m^d(\theta) = 0$
have no solutions $\theta$ with all coordinates nonzero.
We use induction on $m$, the case $m=1$ being trivial.
Assume the statement is true for $m$ and consider the equations $E_{m+1}^i(\theta)=0$
for $i=1,\ldots,d$. Summing all the equations we obtain $E_{m}(\theta)=0$. Notice that $E_{m}(\theta)-E_{m+1}^i(\theta)=\theta_iE_m^i(\theta)$. Hence, if $\theta_i\neq 0$ we obtain $E_m^i(\theta)=0$, which allows us to conclude the proof by induction.

We next consider the other orbits on the toric variety $X_{\Delta_{m-1}}$.
Each of these corresponds to a proper face $H$ of $\Delta_{m-1}$.
Each face $H$ is obtained by fixing some of the coordinates at $0$ or $1$, so it is a
hypersimplex of smaller dimension.
Suppose the torus orbit corresponding to $H$ meets the center of projection
of $\pi$ in a point. Then, by considering its nonzero coordinates,
we obtain an inconsistent system of equations, just like in the
first paragraph above.
\end{proof}

\begin{corollary}\label{cor:singloc}
The singular locus of the hypersurface $\{E_m=0\}$ is
the union of all  $d\choose{m-2}$ coordinate planes
of dimension $m-3$ in $\C \PP^{d-1}$. Its
ideal $ \langle E_m^1, E_m^2,\ldots, E_m^d \rangle$
defines a scheme that  is nonreduced for $m \geq 4$, but
smooth outside of the intersection of any two components.
\end{corollary}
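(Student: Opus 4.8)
The plan is to work with the Jacobian ideal $J=\langle E_m^1,\dots,E_m^d\rangle$ and analyze it on each coordinate stratum. First, Euler's identity gives $m\,E_m=\sum_i\theta_i E_m^i$, so $E_m\in J$ and the singular locus of $\{E_m=0\}$ is precisely the subscheme defined by $J$. For the set-theoretic description I would reuse the induction already carried out in the proof of Lemma~\ref{lem:piwelldefined}: for any $n$ and any $k\le n$, the equations $\partial_{x_i}E_k(x_1,\dots,x_n)=0$ have no common solution with all $x_i\neq 0$. If a point $\theta$ has nonzero coordinates exactly on a set $S$, this forces $|S|\le m-2$: when $|S|=m-1$ one has $E_m^i(\theta)=\prod_{j\in S}\theta_j\neq 0$ for any $i\notin S$ (such $i$ exists since $|S|<d$); when $|S|\ge m$, the equations $E_m^i(\theta)=0$ for $i\in S$ are exactly the equations $\partial_{\theta_i}$ of $E_m$ viewed as a polynomial in the variables $\theta_j$, $j\in S$, contradicting the inductive fact. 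Conversely, if $|S|\le m-2$ then each $E_m^i(\theta)$ is an elementary symmetric polynomial of degree $m-1$ in at most $m-2$ nonzero variables, hence vanishes. Therefore $V(J)=\bigcup_{|T|=m-2}V_T$ with $V_T=\{\theta_j=0:\,j\notin T\}$, the union of the $\binom{d}{m-2}$ coordinate planes of dimension $m-3$.

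Next I would prove smoothness at a point $\theta$ that lies on exactly one component. Such a $\theta$ has support exactly equal to some $T$ with $|T|=m-2$, since a smaller support would place $\theta$ on several of the $V_{T'}$. Set $U=\{1,\dots,d\}\setminus T$, so $|U|=d-m+2\ge 2$, and work in the local ring $\mathcal{O}_\theta$, in which every $\theta_k$ $(k\in T)$ is a unit and $\mathfrak{n}=\langle\theta_j:j\in U\rangle$ is the ideal of $V_T$. Grouping monomials by the number of $U$-variables they involve, one finds that for $i\in U$ the part of $E_m^i$ linear in the $U$-variables equals $\bigl(\prod_{k\in T}\theta_k\bigr)\sum_{j\in U\setminus\{i\}}\theta_j$, while $E_m^i\in\mathfrak{n}^2$ for $i\in T$. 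The coefficient matrix of the linear forms $\sum_{j\in U\setminus\{i\}}\theta_j$, indexed by $U$, is $\mathbf{1}\mathbf{1}^{\top}-I$, whose determinant $(|U|-1)(-1)^{|U|-1}$ is nonzero; hence those forms are a basis of $\mathfrak{n}/\mathfrak{n}^2$. By Nakayama's lemma the $E_m^i$ with $i\in U$ already generate $\mathfrak{n}$ in $\mathcal{O}_\theta$, and since every $E_m^i$ vanishes on $V_T$ we get $J_\theta=\mathfrak{n}$, the radical ideal of the linear space $V_T$. Thus $V(J)$ is smooth at $\theta$, which proves smoothness outside the intersection of any two components.

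Finally, for nonreducedness when $m\ge 4$ I would localize at a general point $\theta$ of a maximal pairwise intersection, namely one whose support is a set $W$ with $|W|=m-3$. The components through $\theta$ are exactly the $V_{W\cup\{j\}}$ for $j\in U:=\{1,\dots,d\}\setminus W$, and near $\theta$ their reduced union has ideal $\langle\theta_j\theta_{j'}:\,j\neq j'\in U\rangle$, whose degree-two part in $\mathfrak{n}^2/\mathfrak{n}^3$ (with $\mathfrak{n}=\langle\theta_j:j\in U\rangle$) has rank $\binom{|U|}{2}$. The same monomial bookkeeping shows $E_m^i\in\mathfrak{n}^3$ for $i\in W$, while for $i\in U$ the lowest-order term of $E_m^i$ is a unit times $E_2(\theta_j:j\in U\setminus\{i\})$; hence the image of $J_\theta$ in $\mathfrak{n}^2/\mathfrak{n}^3$ is generated by these $|U|$ square-free quadrics. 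A short computation with the incidence vectors of the ``edges avoiding $i$'' in the complete graph on $U$ shows their span has rank exactly $|U|=d-m+3$, which is strictly smaller than $\binom{|U|}{2}$ precisely when $|U|\ge 4$, i.e.\ $m\le d-1$. Consequently $J_\theta\subsetneq\sqrt{J_\theta}$ and $V(J)$ is nonreduced at $\theta$. (When $m=d$ one has instead $E_d=\theta_1\cdots\theta_d$ and a reduced normal-crossings singular locus, so the clause ``$m\ge 4$'' is to be understood together with $m\le d-1$.)

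The step I expect to be the main obstacle is this last one: proving that the $|U|$ quadrics $E_2(\theta_j:j\in U\setminus\{i\})$ span a space of dimension exactly $|U|$, and identifying the precise range of $(m,d)$ in which nonreducedness holds. By contrast, the smoothness step is routine once the local expansion of the $E_m^i$ is written down, and the set-theoretic part is essentially the content of Lemma~\ref{lem:piwelldefined}.
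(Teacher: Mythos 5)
Your proof is correct, and for the nonreducedness claim it follows a genuinely different route from the paper. The paper's proof of Corollary~\ref{cor:singloc} is a global degree count: the generators $E_m^1,\dots,E_m^d$ span only a $d$-dimensional space of forms of degree $m-1$, whereas the radical ideal of the subspace arrangement contains all $\binom{d}{m-1}$ square-free monomials of that degree. That comparison shows the ideal is not radical, but strictly speaking it does not distinguish the ideal from its saturation (for $m=3$ the same count gives $d<\binom{d}{2}$, yet the scheme, a union of reduced coordinate points, is reduced), so it establishes nonreducedness of the projective scheme only implicitly. Your local computation at a general point of a pairwise intersection --- comparing the image of $J$ with that of its radical in $\mathfrak{n}^2/\mathfrak{n}^3$ --- is sharper: it produces an actual non-radical local ring, hence genuine embedded/nonreduced structure on the scheme, and it correctly isolates the hypotheses $m\ge 4$ \emph{and} $m\le d-1$. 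Your observation that for $m=d$ the ideal $\langle\prod_{j\ne i}\theta_j\rangle_i$ is exactly the radical of the arrangement, so the corollary as stated silently assumes $m<d$, is a real (if minor) correction to the paper. For the conclusion you do not actually need the exact rank $|U|$ of the quadrics $E_2(\theta_j: j\in U\setminus\{i\})$, only the trivial bound that $|U|$ generators cannot span the $\binom{|U|}{2}$-dimensional space of square-free quadrics when $|U|\ge 4$; so the step you flag as the main obstacle can be bypassed. The smoothness argument via Nakayama is a complete version of the ``computation of the tangent space'' the paper merely alludes to, and the set-theoretic part is the same reduction to Lemma~\ref{lem:piwelldefined} that the paper intends.
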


\begin{proof}
The fact that the scheme is smooth outside of the intersection of any two components follows from the computation of the tangent space. The scheme is nonreduced, because the
ideal generators span  a $d$-dimensional vector space, while the vector space of
forms of degree $m-1$ in the radical ideal of
the coordinate subspace arrangement has
dimension $d\choose{m-1}$.
\end{proof}

\begin{example} \rm
Let $d=5$ and $m=4$. The gradient ideal
$\langle \nabla E_4 \rangle = \langle E_4^1, E_4^2, E_4^3, E_4^4,E_4^5 \rangle$
is the intersection of its ten minimal primes
$\langle \theta_i,\theta_j,\theta_k \rangle$
and the five embedded primary ideals
$$ \bigl\langle
\,\theta_i^2\,, \,\theta_j^2 \,,\, \theta_k^2 \,,\, \theta_l^2\,,\,
\theta_i \theta_j + \theta_i \theta_k + \theta_j \theta_k,\,
\theta_i \theta_j + \theta_i \theta_l + \theta_j \theta_l,\,
\theta_i \theta_k + \theta_i \theta_l + \theta_k \theta_l,\,
\theta_j \theta_k + \theta_j \theta_l + \theta_k \theta_l \,\bigr\rangle. $$
This defines the ten coordinate lines in $\C \PP^4$,
with embedded points at their intersections.
\hfill $\diamondsuit$
\end{example}

\begin{proof}[of Theorem  \ref{thm:mainelementary}]
Fix the  polytope $P=\Delta_{m-1}+\Delta_1$. This is the
generalized permutohedron \cite{Postnikov} which is the convex hull of
all points   $(2,1^{m-2},0^{d-m+1})\in \R^d$.
Both polytopes $P$ and $\Delta_{m-1}$ are normal,
since they are the base polytopes of polymatroids \cite[Theorem 6.1]{HH}.
Consider the three projective toric varieties $X_P$, $X_{\Delta_{m-1}}$ and $X_{\Delta_1}\simeq\C\PP^{d-1}$.
We have natural  rational maps between these varieties, induced from the
identification of their dense tori.

Let $L_1$ be the very ample line bundle on $X_{\Delta_{m-1}}$
that is given by the hypersimplex $\Delta_{m-1}$.
Let $S$ be the linear system on $\C \PP^{d-1}$
that is spanned by the $d$ partial derivatives $E_m^i$.
This  induces a linear system $S_1$ on $X_{\Delta_{m-1}}$.
The linear system $S_1$ is contained in $H^0(X_{\Delta_{m-1}},L_1)$.
From  Lemma~\ref{lem:piwelldefined} we
know that   $S_1$ is base point free.

The normal fan of $P$ is a common refinement of the normal fans of $\Delta_{m-1}$ and $\Delta_1$. Hence, there are regular toric maps from $X_P$ to $X_{\Delta_{m-1}}$ and $X_{\Delta_1}$. Pulling back the line bundle $L_1$ (resp. $\mathcal{O}_{\PP^{d-1}}(1)$) from $X_{\Delta_{m-1}}$ (resp.~$X_{\Delta_1}$),
we obtain a line bundle $\tilde L_1$ (resp. $\tilde L_2$) on $X_P$. The pull-back of the system $S_1$ (resp. $H^0(\mathcal{O}(1)$)) is a base point free system $\tilde S_1$ (resp. $\tilde S_2$).

As both, $\tilde S_1$ and $\tilde S_2$, are base point free, the intersection of the zero locus of $i-1$ generic sections of $\tilde S_1$ and $d-i$ generic sections of $\tilde S_2$ must be contained in the dense torus of $X_P$. The tori in all three toric varieties are naturally isomorphic. Hence the common zero locus of these sections corresponds to points of the torus of $\C\PP^{d-1}$ where $i-1$ generic sections of $S$ intersect with $d-i$ generic hyperplanes. By Bertini's Theorem, the intersection in the torus consists of smooth points only. Their number is the coefficient of $s^{i-1} t^{d-i}$ in $[X_{E_{m}}]$.

From Bernstein's Theorem \cite[Section 5.4]{FultonTV} we know
that the intersection number of these divisors on $X_P$
is equal to the mixed volume of
 $d-i$ copies of the simplex $\Delta_1$ and
$i-1$ copies of the hypersimplex $\Delta_{m-1}$.
With this, Lemma \ref{lem:mixedeuler} concludes the proof of
Theorem~\ref{thm:mainelementary}.
\end{proof}

It is instructive to examine the various cases
in our formula  in Theorem \ref{thm:mainelementary}.
For $i=d$, we are just looking at $d-1$ copies of the hypersimplex $\Delta_{m-1}$,
and the mixed volume becomes $\alpha_d = {\rm vol}(\Delta_{m-1}) = A(d-1,m-2)$.
At the extreme, for $i=1$, we obtain $\alpha_1 = {\rm vol}(\Delta_1) = 1$.

The case distinction in our formula for $\alpha_i$
is understood via the singular locus of $\{E_m = 0\}$.
According to Corollary \ref{cor:singloc}, this has dimension
$m-3$. If $d-i > m-3$ then the $d-i$ generic hyperplanes in $\C \PP^{d-1}$
will miss this singular locus. In this case, B\'ezout's Theorem
applies to the $i-1$ generic sections of the linear system $S$
in $\C \PP^{d-1}$. They will meet in $(m-1)^{i-1}$ distinct reduced points.
This explains the formula $\alpha_i=(m-1)^{i-1}$ for $i < d-m+3$.

Of special interest is the borderline case $i= d-m+3$.
Here the $m-4$ generic hyperplanes meet the singular locus
in $\binom{d}{m-2}$ distinct reduced points, one for each
coordinate subspace, by Corollary \ref{cor:singloc}.
This number gets subtracted from the B\'ezout number, and therefore
\begin{equation}
\label{eq:borderline}
 \alpha_{d-m+3}\,\,=\,\,(m-1)^{d-m+2}\,-\,{d\choose {m-2}}.
\end{equation}
This explains the bold face numbers along the borderline diagonal
for $m=7$ in (\ref{eq:d7ex}).

For $i>d-m+3$ the intersection
is not proper in $\C \PP^{d-1}$.
It contains the base locus of $S$,
which is the singular locus of $\{E_m = 0\}$,
whose dimension strictly dominates the dimension of the other components. This is an
instance of \emph{excess intersection}.
The same issue occurs for other hyperbolic
polynomials $f(\theta)$.
For the Gaussian case, when $f(\theta)$ is the
symmetric determinant, this is addressed
 in \cite[Theorem 2.3]{stuhl}.
For the elementary symmetric polynomial,
we resolved this problem by passing to the
toric variety $X_{\Delta_{m-1}}$. This worked well
here, thanks to Lemma \ref{lem:piwelldefined}.
However,  for other $f(\theta)$, the toric approach will not suffice.

\begin{example} \rm
Let $d=6$, $m=3$ and revisit Example \ref{eq:dreisechs}.
The gradient degree of $g(\theta) = E_3(\theta)$
is $ {\rm vol}(\Delta_2) = A(5,1) = 26$, which is the leading coefficient
in (\ref{eq:E36}). The Laplacian determinant $f(\theta)$
 is obtained from $g(\theta)$ by deleting
four of the $20$ terms. Both $\nabla f$ and $\nabla g$
have the same Newton polytope, namely the second
hypersimplex $\Delta_2$. However, $\nabla f$ is birational: the gradient degree of $f(\theta)$ is $1$.
Lemma \ref{lem:piwelldefined} holds for $g(\theta)$ but it
fails dramatically for  $f(\theta)$.
\hfill $\diamondsuit$
\end{example}

Theorem~\ref{thm:mainelementary}
provides an upper bound for the degree and ML degree
of the exponential variety $\mathcal{L}^{\nabla E_m}$
defined by any linear subspace $\mathcal{L} \subset \R^d$,
by  Theorem \ref{eq:twoinequalities1}.
The more special we choose $\mathcal{L}$, the further
away we expect to be from the upper bound $\alpha_d$.
In the next example we explore the range of possibilities
for the exponential family seen in Example~\ref{ex:E_3}.

\begin{example}
\label{ex:E_3c}
\rm
Let $d=4$ and $m=3$. The cubic surface $\{E_3 = 0\}$ has four singular points.
It bounds the convex set $C$ on the left in Figure \ref{fig:somosa}.
The dual body $K$, on the right in Figure \ref{fig:somosa},
is the convex hull of the surface $\{Q = 0\}$ in (\ref{eq:steiner}).
The gradient multidegree of $E_3$ equals
$$ [X_3]\,\, = \,\, 4 s^3 \,+ \,4 s^2 t \, +\,2 s t^2 \,+\, t^3. $$
The leading coefficient tells us that
the map $C \rightarrow K$ is $4$-to-$1$.
 The second coefficient reveals
that the surface $\mathcal{L}^{\nabla E_3}$ has degree $4$
whenever $\mathcal{L}$ is a random plane in $\R \PP^3$.
However, when the plane $\mathcal{L}$ meets the singular locus of
$\{E_3 = 0\}$ then the   degree of that surface will drop.

\begin{itemize}
\item The plane $\mathcal{L} = \{\theta_1 + \theta_2 = 2 \theta_3\}$ contains
one singular point. The variety $\mathcal{L}^{\nabla E_3}$ is a cubic surface
that is singular along the line $\{
4 \sigma_1 - \sigma_3 + \sigma_4 =
4 \sigma_2 - \sigma_3 + \sigma_4 = 0\}$.

 \item The plane $\mathcal{L} = \{\theta_1+\theta_2+\theta_3=0\}$
 intersects $\{E_3 = 0\}$ in exactly the same singular point.
   But now the resulting exponential surface is a plane:
$\mathcal{L}^{\nabla E_3} = \{ \sigma_1+ \sigma_2+ \sigma_3= \sigma_4\}$.

\item The plane $\mathcal{L} = \{\theta_1 = 2 \theta_2 \}$ meets $\{E_3 = 0\}$
in two singular points. Now $\mathcal{L}^{\nabla E_3}$ is the quadric
$\{
45 \sigma_1^2-63 \sigma_1 \sigma_2+18 \sigma_2^2+18 \sigma_1 \sigma_3
-9 \sigma_2 \sigma_3+\sigma_3^2+18 \sigma_1 \sigma_4-9 \sigma_2 \sigma_4-2 \sigma_3 \sigma_4+\sigma_4^2 = 0\}$.

\item The plane $\mathcal{L} = \{\theta_1 = 0\}$ spanned by three singular points
maps to the facet
$\mathcal{L}^{\nabla E_3} = \{\sigma_1 = \sigma_2+\sigma_3 +\sigma_4\}$
of the octahedron inside $K$ that was mentioned in Example~\ref{ex:E_3}.
\end{itemize}

\begin{figure}[h]
\centering
 \includegraphics[width=7.6cm]{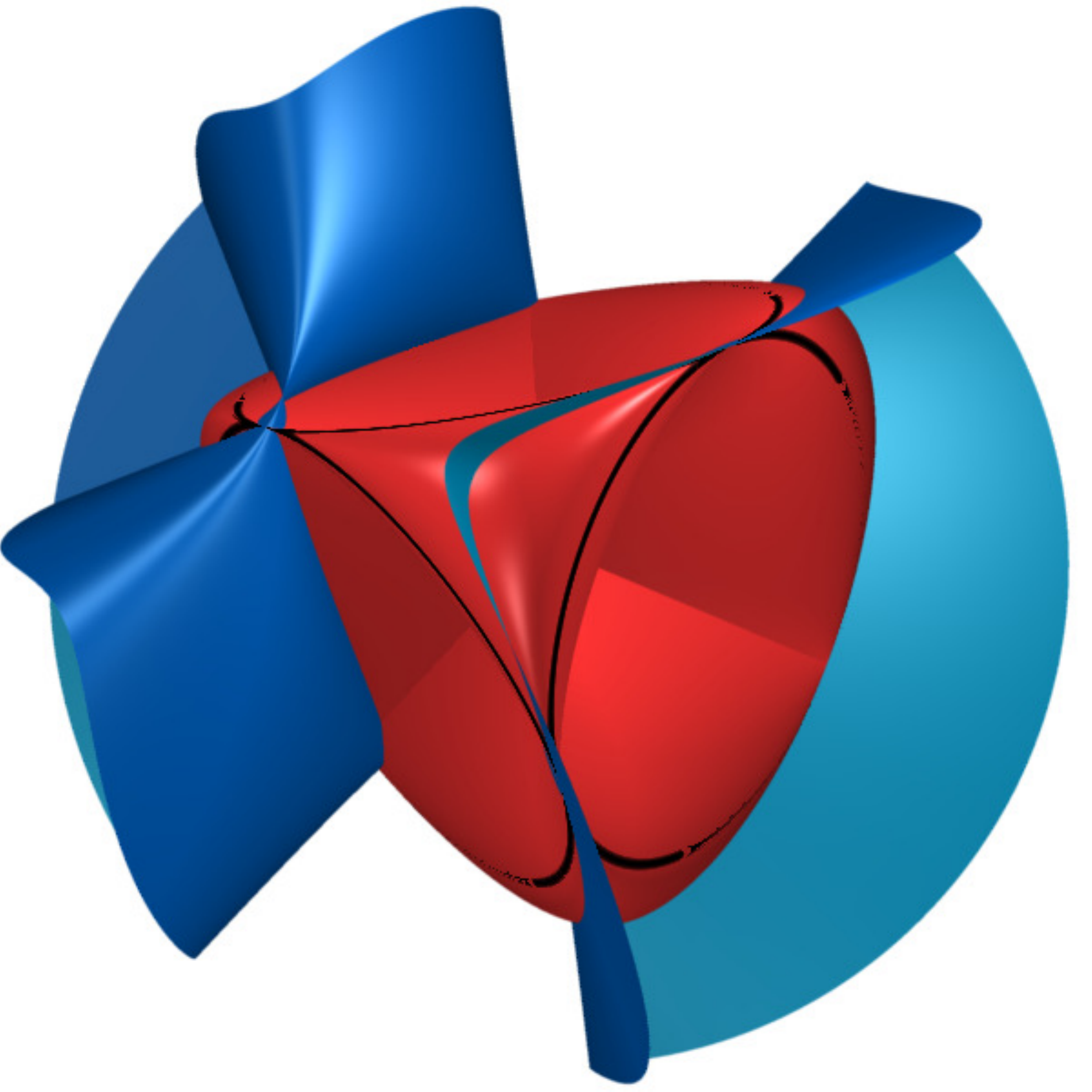}  \qquad\includegraphics[width=6.1cm]{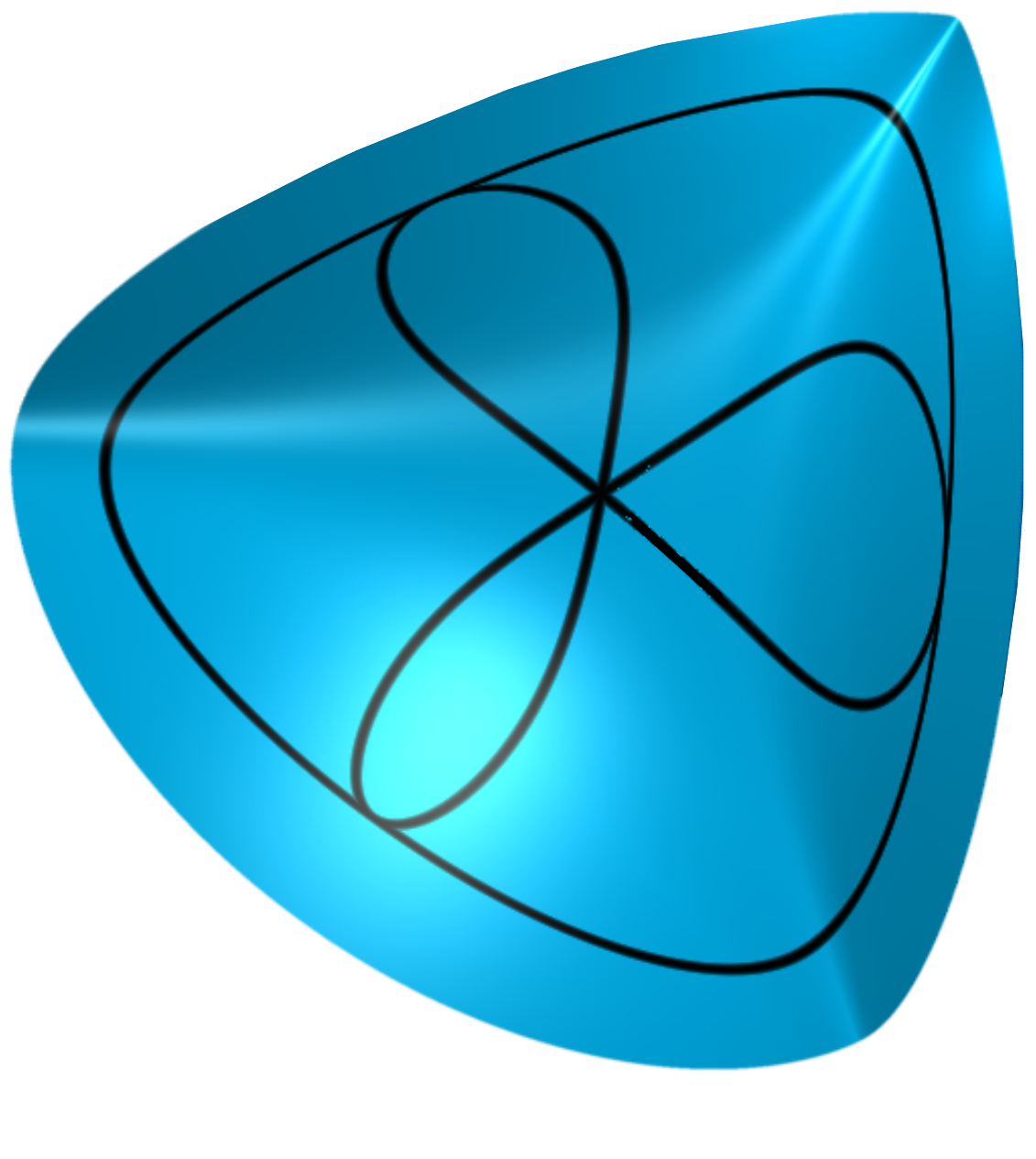}
 \caption{The positive exponential variety is one connected component in the
intersection of the exponential variety (blue) with the convex body of sufficient statistics (red).}
\label{fig:redblue}
\end{figure}

The generic case is exhibited by the
 plane in Figure~\ref{fig:yellowgreen}, which is
$\mathcal{L} = \{\theta_1 + 2 \theta_2 + 3 \theta_3 = 6 \theta_4\}$.
Figure~\ref{fig:redblue} is the dual of Figure~\ref{fig:yellowgreen}.
The exponential variety $\mathcal{L}^{\nabla E_3}$
is the quartic surface shown in blue.
We see that  $\mathcal{L}^{\nabla E_3}$
intersects the convex body $K$ (shown in red in Figure~\ref{fig:redblue}) in two
 connected components. The positive exponential variety $\mathcal{L}^{\nabla E_3}_{\succ 0}$
is the component that cuts off the rightmost red circle
in Figure~\ref{fig:redblue} (left) from the other three.
That component is shown in Figure~\ref{fig:redblue} (right),
together with the positive exponential variety $\mathcal{L}^{\nabla E_3}_{\succ 0}$.
The curve indicates where $\mathcal{L}^{\nabla E_3}$ intersects the Steiner surface.
It has degree six.
The outer curve bounds $\mathcal{L}^{\nabla E_3}_{\succ 0}$. The projection $\pi_\mathcal{L}$ flattens out $\mathcal{L}^{\nabla E_3}_{\succ 0}$ and maps it bijectively onto  the
planar convex set $\mathcal{K}_\mathcal{L}$.
\smallskip

Next consider the case when $\mathcal{L}$ is a line in $\C \PP^3$.
Here $[X_3]$ tells us that the curve $\mathcal{L}^{\nabla E_3}$ is expected to have degree $2$.
This drops to $1$ if $\mathcal{L}$ contains one of the four singular points.
However, the image  $\mathcal{L}^{\nabla E_3}$ can also degenerate to a double line
for special lines $\mathcal{L}$ that are disjoint from the singular locus of $\{E_3 = 0\}$.
For such a line $\mathcal{L}$, the restriction of the map $\nabla E_3$ to $\mathcal{L}$
has degree $2$.
One such special line is
$ \mathcal{L} = \{\theta_1 = \theta_2 , \theta_3 = \theta_4\}$,
which maps to
$\mathcal{L}^{\nabla E_3}
= \{\sigma_1 = \sigma_2 , \sigma_3 = \sigma_4\}$.
Finally, if $\mathcal{L}$ is one of the six coordinate lines, then
$\mathcal{L}^{\nabla E_3}$ is a point. In our picture this means that the
edges of the tetrahedron in $C$ contract to the vertices
of the octahedron in $K$.
\hfill $\diamondsuit$
\end{example}

\begin{example} \rm
Let $d = 5$. It is instructive to ponder the geometry
of the convex bodies $C$ and $ K$
 and how these $4$-dimensional bodies
 encompass their various exponential varieties.

First consider  $m=4$. Here the situation is analogous
to Examples~\ref{ex:E_3} and \ref{ex:E_3c}.
The body $K$ consists of all points $\sigma \in \R \PP^4$ where
$\sigma_5$ is the sum of all entries in a
positive semidefinite $4 {\times} 4$-matrix with diagonal
$(\sigma_1,\sigma_2,\sigma_3,\sigma_4)$.
The map from $C$ to $K$ is $11$-to-$1$, since
$$ [X_4] \,\,=\,\, 11s^4 \,+\,17s^3t\,+\,9s^2t^2 \,+ \,3st^3 \, + \,t^4 .$$
The main component in the algebraic boundary of $K$ is
the hypersurface dual to $\{E_4 = 0\}$.
This threefold has degree $8$ and it touches
each of the five facets of the ambient $4$-simplex
in a Steiner surface (\ref{eq:steiner}).
For a general hyperplane $\mathcal{L} \subset \C \PP^4$,
the exponential variety $\mathcal{L}^{\nabla E_4}$ is a threefold
of degree $17$. If $\mathcal{L}$ is a general  $2$-plane, then
$\mathcal{L}^{\nabla E_4}$ is a surface of degree $9$.

Next consider $m=3$. Now $K$ is strictly contained in the previous body, and
$\partial K$ meets each tetrahedral facet of the ambient $4$-simplex in an inscribed sphere,
such as
$\{\sigma_1^2+\sigma_2^2+\sigma_3^2+\sigma_4^2 =
\sigma_1 \sigma_2+\sigma_1\sigma_3+\sigma_1\sigma_4+\sigma_2\sigma_3
+\sigma_2\sigma_4
+\sigma_3\sigma_4\}$. The map from $C$ to $K$ is $11$-to-$1$, since
$$ [X_3] \,\,=\,\, 11s^4\,+\,8s^3t\,+\,4s^2t^2\,+\,2st^3\,+\,t^4 . $$
The main component in $\partial K$ is
the hypersurface dual to $\{E_3 = 0\}$, which is
defined by a polynomial of degree $14$ with $3060$ terms.
Setting $\sigma_5 = 0$, the resulting surface decomposes as twice the inscribed sphere
times an irreducible surface of degree $10$. This suggests that there
is lots of interesting geometry yet to be discovered in the cones of
sufficient statistics for the hyperbolic exponential families
given by elementary symmetric polynomials for $d \geq 5$.~\hfill~$\diamondsuit$
\end{example}

\section{Hankel Models}
\label{sec:hankel}

The paradigm of an exponential variety is the
 set of symmetric matrices whose inverses satisfy linear constraints.
These arise from the exponential family structure on the
multivariate normal distributions. Here the hyperbolic polynomial
$f(\theta)$ is the determinant of a symmetric $m \times m$-matrix of unknowns.
The resulting {\em linear Gaussian concentration models} were
studied in \cite{andersonLinearCovariance} and their geometry in \cite{stuhl}. In this paper we encountered them  in
Example~\ref{ex:lineargaussian}.

In this section we present a beautiful instance
that is related to numerous  topics
in the mathematical sciences.
Let $\mathbb{S}^m \simeq \mathbb{R}^{\binom{m+1}{2}}$ denote the space of symmetric
$m \times m$-matrices, and $C = {\rm PD}_m$ the positive definite cone.
Let $\mathcal{L}$  be the $(2m-1)$-dimensional subspace of
all {\em Hankel matrices} $(\theta_{i+j})_{1 \leq i,j \leq m}$.
Then  $C_{\mathcal{L}} =  C \cap \mathcal{L}$ is the cone of
positive definite Hankel matrices.

\begin{proposition}
The closure of  $C_\mathcal{L} $ consists of the vectors of
 $2m-1$ first moments of all probability distributions on $\mathbb{R}$.
Its dual  $K_\mathcal{L} = (C_\mathcal{L})^\vee$ is the cone of all nonnegative polynomials of
degree $2m-2$ in one variable~$x$. The polynomials in
$K_\mathcal{L}$ are precisely 
the sums of squares.
\end{proposition}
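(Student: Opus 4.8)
The plan is to make all four cones explicit by transporting them to the space of univariate polynomials of degree at most $2m-2$, and then to invoke two classical facts: Hilbert's theorem that a nonnegative univariate polynomial is a sum of squares, and the description of positive semidefinite Hankel forms through the moment curve. Both appear in \cite{BPT}.

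First I would fix coordinates. Write the Hankel matrix as $H(\mu)=(\mu_{i+j})_{0\le i,j\le m-1}$, so that $\mathcal{L}\simeq\R^{2m-1}$ is the space of moment vectors $\mu=(\mu_0,\dots,\mu_{2m-2})$. Under the trace form on $\S^m$ one checks that $\mathcal{L}^\perp$ consists of the symmetric matrices all of whose anti-diagonal sums vanish, so the projection $\pi_{\mathcal{L}}\colon\S^m\to\S^m/\mathcal{L}^\perp$ sends a matrix $A$ to the vector $\tau=(\tau_0,\dots,\tau_{2m-2})$ with $\tau_\ell=\sum_{i+j=\ell}A_{ij}$. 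Identifying $\tau$ with the polynomial $p_\tau(x)=\sum_\ell\tau_\ell x^\ell$ of degree at most $2m-2$, the pairing between $\mathcal{L}$ and $\S^m/\mathcal{L}^\perp$ turns into $\langle\mu,\tau\rangle=\sum_\ell\mu_\ell\tau_\ell=L_\mu(p_\tau)$, where $L_\mu$ is the linear functional on univariate polynomials determined by $L_\mu(x^\ell)=\mu_\ell$.

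Next I would identify $K_{\mathcal{L}}$. By (\ref{eq:fivecones}) together with Example~\ref{ex:fullgaussian} we have $K_{\mathcal{L}}=\pi_{\mathcal{L}}(K)=\pi_{\mathcal{L}}(\overline{{\rm PD}}_m)$. Writing a positive semidefinite matrix as $\sum_k v_k v_k^T$ and applying $\pi_{\mathcal{L}}$, the associated polynomial is $\sum_k\bigl(\sum_i(v_k)_i x^i\bigr)^2$, a sum of squares of polynomials of degree at most $m-1$; conversely every such sum arises in this way. Hence $K_{\mathcal{L}}$ is precisely the cone of sums of squares of univariate polynomials of degree at most $m-1$. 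By Hilbert's theorem \cite{BPT}, every nonnegative univariate polynomial of degree at most $2m-2$ is a sum of two squares of polynomials of degree at most $m-1$, so this cone coincides with the cone of all nonnegative univariate polynomials of degree $2m-2$. This proves the second and third assertions.

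Finally I would dualize for $\overline{C_{\mathcal{L}}}$. Since $C_{\mathcal{L}}={\rm PD}_m\cap\mathcal{L}$ is a convex cone with nonempty interior in $\mathcal{L}$, biduality yields $\overline{C_{\mathcal{L}}}=(K_{\mathcal{L}})^\vee$; moreover, because $\mathcal{L}$ meets the interior of ${\rm PD}_m$, we also have $\overline{C_{\mathcal{L}}}=\overline{{\rm PD}}_m\cap\mathcal{L}$, the cone of positive semidefinite Hankel matrices. In the polynomial coordinates above, membership in $(K_{\mathcal{L}})^\vee$ reads $L_\mu(p)\ge 0$ for every nonnegative polynomial $p$ of degree at most $2m-2$. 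Every moment vector $\mu$ of a positive Borel measure $\rho$ on $\R$ satisfies this, since $L_\mu(p)=\int p\,\dd\rho\ge 0$; and by the classical solution of the truncated Hamburger moment problem --- equivalently, the fact that the positive semidefinite Hankel forms are the closed conical hull of the moment curve $t\mapsto(1,t,\dots,t^{2m-2})$, see \cite{BPT} --- these moment vectors are dense in $(K_{\mathcal{L}})^\vee$. Restricting to the affine slice $\mu_0=1$ singles out the probability measures. I expect the main obstacle to be exactly this closure step: a positive semidefinite Hankel matrix need not itself be a moment matrix --- for instance the matrix with a single nonzero entry in the lower right corner, whose only candidate representing measure is the zero measure --- but only a limit of moment matrices, so the clean description holds for the closure $\overline{C_{\mathcal{L}}}$ rather than for $C_{\mathcal{L}}$ itself. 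Observing that every positive definite Hankel matrix is a genuine moment matrix (for example of a measure with smooth density) then places $C_{\mathcal{L}}$ inside the moment cone and completes the picture.
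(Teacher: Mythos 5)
Your argument is correct and is essentially the argument the paper delegates to its references: the paper's proof consists of citing \cite{BPT} (Theorem 3.146) for the moment-cone description of $\overline{C_\mathcal{L}}$ and \cite{BPT} (Section 3.5.1) together with the fact that nonnegative univariate polynomials are sums of squares for the description of $K_\mathcal{L}$. Your explicit computation of $\pi_\mathcal{L}$, the biduality step, and the careful remark that a positive semidefinite Hankel matrix need only be a \emph{limit} of moment matrices (so the clean statement is about the closure) correctly supply exactly the content behind those citations.
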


\begin{proof}
The first statement is \cite[Theorem 3.146]{BPT}.
For the second statement see
 \cite[Section 3.5.1]{BPT}.
 Every nonnegative real polynomial in one variable $x$  is a sum of squares
 in   $\R[x]$.
\end{proof}

We shall see in Proposition \ref{prop:GrassBezout1}
that the exponential variety $\mathcal{L}^{\nabla f}$ equals
 the Grassmannian ${\rm Gr}(2,m+1)$
of $2$-dimensional linear subspaces of $\mathbb{R}^{m+1}$,
in its Pl\"ucker embedding in $\mathbb{R} \PP^{{\binom{m+1}{2}}-1}$.
The  positive Grassmannian
  $\,\mathcal{L}^{\nabla f}_{\succ 0}=    {\rm Gr}(2,m+1)_{\succ 0}\,$
  is in natural bijection with the cone 
       $K_\mathcal{L}$.

Hankel matrices are closely related to {\em Toeplitz matrices}. In fact, under renaming the
coordinates in $\mathbb{S}^m$, they represent  the same subspace $\mathcal{L}$.
See \cite{gray2006toeplitz} for an introduction to Toeplitz matrices from the engineering perspective.
In statistics, Toeplitz matrices are an instance of a {\em colored Gaussian graphical model} as in \cite{HLcolor} and 
\cite[Section~5]{stuhl}. The results  in this section are stated for Hankel matrices, but
the story would be isomorphic for Toeplitz matrices.

To match the notation with Section~\ref{sec:lin_sub},
we now have $d = \binom{m+1}{2}$ and $c = 2m-1$.
The map $\pi_\mathcal{L} : \R^d \rightarrow \R^c$
that is dual to the inclusion $\mathcal{L} \subset \R^d$
can be described as follows.
We identify the domain $\R^d$ with $\mathbb{S}^m$, now regarded as the ambient
   space for covariance matrices $\Sigma = (\sigma_{ij})$.
   We identify the target $\R^c$ with the space $\R[x]_{\leq 2m-2}$ of
univariate polynomials of degree at most $2m-2$.
Each symmetric matrix $\Sigma$ is mapped to such a polynomial as follows:
\begin{equation}
\label{eq:makepoly}  \pi_\mathcal{L} \,: \, \Sigma \,\,\mapsto \,\,
(1, x , x^2 ,\ldots, x^{m-1}) \cdot \Sigma \cdot (1,x,x^2,\ldots, x^{m-1})^T.
\end{equation}
The image $K_\mathcal{L}$ of the positive semidefinite cone $K = \overline{\rm PD}_m$
under the map $\pi_\mathcal{L}$ consists of all polynomials that are
sums of squares in $\R[x]$. These are
precisely the nonnegative polynomials.

The following result is a reformulation of a well-known result in linear algebra,
stating that the inverses of
Hankel matrices are precisely the B\'ezout matrices; see \cite[Section 6]{HF}.

\begin{proposition}
\label{prop:GrassBezout1}
After a linear change of coordinates in $\mathbb{R}^d \simeq \mathbb{S}^m$,
the exponential variety $\mathcal{L}^{-1}$ of all inverse Hankel matrices
coincides with (the affine cone over) the Grassmannian ${\rm Gr}(2,m+1)$. In particular,
the degree of $\mathcal{L}^{-1}$ in
$\C \PP^{d-1}$ is the Catalan number
$\frac{1}{m}\binom{2m-2}{m-1}$.
\end{proposition}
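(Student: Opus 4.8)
The plan is to make the asserted linear change of coordinates explicit via the classical B\'ezoutian construction, deduce that $\mathcal{L}^{-1}$ is the affine cone over the Pl\"ucker-embedded Grassmannian, and then invoke the known degree of that Grassmannian. Write $V=\R[x]_{\leq m}$ for the $(m+1)$-dimensional space of univariate polynomials of degree at most $m$, with monomial basis $1,x,\ldots,x^m$. For $p,q\in V$ the \emph{B\'ezoutian} $B(p,q)$ is the symmetric $m\times m$ matrix whose entries are defined by
$$ \frac{p(x)\,q(y)-p(y)\,q(x)}{x-y}\;\;=\;\;\sum_{i,j=1}^{m}B(p,q)_{ij}\,x^{i-1}y^{j-1}. $$
The assignment $(p,q)\mapsto B(p,q)$ is bilinear and alternating, so it factors through a linear map $\beta\colon\bigwedge^2 V\to\S^m$, $\,p\wedge q\mapsto B(p,q)$, whose source and target both have dimension $\binom{m+1}{2}=d$.

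The first step is to prove that $\beta$ is a linear isomorphism. Evaluating on the monomial basis, a short computation shows that for $i<j$ the matrix $B(x^i,x^j)$ has entry $-1$ in every position $(a+1,b+1)$ with $a+b=i+j-1$ and $i\leq a\leq j-1$, and $0$ elsewhere; in particular it is supported on the single anti-diagonal $a+b=i+j-1$ by a block of $-1$'s that grows (strictly nested) as $i$ decreases with $i+j$ fixed. Ordering the pairs $(i,j)$ first by $i+j$ and then by block length exhibits $\{B(x^i,x^j):i<j\}$ as a triangular, hence linearly independent, family; since it has $\binom{m+1}{2}=\dim\S^m$ members, $\beta$ is bijective. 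Alternatively one may simply cite the Hankel--B\'ezout correspondence of \cite[Section~6]{HF}, which already contains the bijectivity of $\beta$ between coprime pairs $(p,q)$ with $\max(\deg p,\deg q)=m$ and nonsingular B\'ezoutians.

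The second step identifies $\mathcal{L}^{-1}$. By the classical Hankel--B\'ezout duality \cite[Section~6]{HF}, a symmetric matrix is the inverse of an invertible Hankel matrix exactly when it is a nonsingular B\'ezoutian $B(p,q)$, and such matrices are precisely the images under $\beta$ of the nonzero decomposable $2$-vectors $p\wedge q\in\bigwedge^2 V$. These form the affine cone over ${\rm Gr}(2,m+1)$ in its Pl\"ucker embedding, a Zariski-closed set. Now the positive definite Hankel matrices are Zariski dense in the space $\mathcal{L}\simeq\R^{2m-1}$ of all Hankel matrices, and inversion is a morphism on the invertible locus, so (up to the harmless scalar $\tfrac12$ in the gradient map of Example~\ref{ex:fullgaussian}) the set $F(C\cap\mathcal{L})$ is Zariski dense in the image under $\beta$ of that whole cone. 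Taking Zariski closures and using that $\beta$ is a linear isomorphism, we conclude that $\mathcal{L}^{-1}$ equals the image under $\beta$ of the affine cone over ${\rm Gr}(2,m+1)$; hence $\beta^{-1}$ is the desired linear change of coordinates on $\R^d\simeq\S^m$.

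Finally, the degree of ${\rm Gr}(2,m+1)$ in its Pl\"ucker embedding in $\C\PP^{\binom{m+1}{2}-1}$ is the classical number $\frac{(2m-2)!}{(m-1)!\,m!}=\frac{1}{m}\binom{2m-2}{m-1}$, the $(m-1)$-st Catalan number, computed e.g.\ by the hook-length formula for Schubert classes; and degree is preserved under a linear isomorphism of the ambient space. The main obstacle is the verification in the first step that $\beta$ is genuinely an isomorphism and not merely a nonzero linear map; this is either the elementary monomial-basis computation sketched above or a direct appeal to the Hankel--B\'ezout literature. Everything else---the alternating bilinearity of $B$, the density of positive definite Hankel matrices, and the Grassmannian degree formula---is formal or classical.
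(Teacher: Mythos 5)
Your proposal is correct and follows essentially the same route as the paper's proof: both rest on the classical Hankel--B\'ezout correspondence from \cite[Section 6]{HF} together with an explicit triangular linear change of coordinates between the Pl\"ucker coordinates and the matrix entries $\sigma_{ij}$ (your monomial-basis computation of $B(x^i,x^j)$ supported on nested anti-diagonal blocks is precisely the triangular structure the paper records in equation (\ref{eq:b_to_p})), followed by the standard Catalan formula for $\deg {\rm Gr}(2,m+1)$. Your write-up is a bit more careful on two points the paper leaves implicit --- verifying that $\beta$ is genuinely an isomorphism and supplying the Zariski-density argument identifying $\mathcal{L}^{-1}$ with the full image of the cone --- but the underlying argument is the same.
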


\begin{proof}
We here write ${\rm Gr}(2,m+1)$ for the
cone over the Grassmannian. This
is an affine variety of dimension $2m-1$
in  the ambient space $\mathbb{R}^d = \mathbb{R}^{\binom{m+1}{2}}$.
 The Pl\"ucker coordinates are denoted
$p_{ij}$ for $0 \leq i < j \leq m$, and these satisfy the quadratic Pl\"ucker relations
\begin{equation}
\label{eq:pluckerrel}
\underline{p_{il} p_{jk}} \, -\, p_{ik} p_{jl} \,+ \,
 p_{ij} p_{kl}  \,\, = \,\, 0  \qquad \hbox{for}\,\,\, 0 \leq  i < j < k < l \leq m.
 \end{equation}
The degree of ${\rm Gr}(2,m+1)$ is the {\em Catalan number}
$\frac{1}{m}\binom{2m-2}{m-1}$, e.g.~by
\cite[Proposition 3.7.4]{AIT}.

For each Pl\"ucker vector $p \in {\rm Gr}(2,m+1)$ we write the
corresponding B\'ezout matrix as $\Sigma = (\sigma_{ij})$.
This $m \times m$ matrix is symmetric and its entries are
thought of as covariances:
\begin{equation}
\label{eq:b_to_p}
\sigma_{ij} \,\,\, =  \sum_{k={\rm max}(1,i+j-m)}^i \!\!\! p_{k,i+j+1-k} \qquad
{\rm for} \,\, 1 \leq i \leq j \leq m .
\end{equation}
These linear equations have an upper triangular structure, so they can be inverted
to express the Pl\"ucker coordinates $p_{ij}$ in terms of  the B\'ezout coordinates $\sigma_{ij}$.
This is the promised linear change of coordinates in
$\R^{\binom{m+1}{2}}$. In particular, we can write
the quadrics (\ref{eq:pluckerrel}) in the $\sigma_{ij}$.
\end{proof}

This defines the structure of an exponential variety on the Grassmannian
${\rm Gr}(2,m+1)$. The positive exponential variety
${\rm Gr}(2,m+1)_{\succ 0}$  consists of all solutions to
(\ref{eq:pluckerrel}) such that $\Sigma = (\sigma_{ij})$ is positive definite.
This model is in bijection with the cone $K_{\mathcal{L}}$ of nonnegative polynomials
of degree at most $2m-2$ under the sufficient statistics map
$\pi_\mathcal{L}$ in  (\ref{eq:makepoly}).

\begin{example} \rm
Let $m=4$. The Pl\"ucker coordinates
in terms of the B\'ezout coordinates are
$$
\begin{matrix}
p_{12} = \sigma_{12}, &
p_{13} = \sigma_{13}, &
p_{14} = \sigma_{14}, &
p_{15} = \sigma_{15}, &
p_{23} = \sigma_{23} - \sigma_{14}, \\
p_{24} = \sigma_{24} - \sigma_{15}, &
p_{25} = \sigma_{25}, &
p_{34} = \sigma_{34} - \sigma_{25}, &
p_{35} = \sigma_{35}, &
p_{45} = \sigma_{45}.
\end{matrix}
$$
After this substitution, the ideal of Pl\"ucker relations for $m=4$ equals
$$
\begin{matrix}
\langle
\sigma_{11} \sigma_{24}{-}\sigma_{11} \sigma_{33}{-}\sigma_{12} \sigma_{14}{+}\sigma_{12} \sigma_{23}{+}\sigma_{13}^2{-}\sigma_{13} \sigma_{22},\,
\sigma_{13} \sigma_{44}{-}\sigma_{14} \sigma_{34}{-}\sigma_{22} \sigma_{44}{+}\sigma_{23} \sigma_{34}{+}\sigma_{24}^2{-}\sigma_{24} \sigma_{33},
\qquad \\
\sigma_{11} \sigma_{34}{-}\sigma_{12} \sigma_{24}{-}\sigma_{13} \sigma_{14}{+}\sigma_{14} \sigma_{22},
\sigma_{11} \sigma_{44}{-}\sigma_{13} \sigma_{24} {-}\sigma_{14}^2{+}\sigma_{14} \sigma_{23} ,
\sigma_{12} \sigma_{44} {-}\sigma_{13} \sigma_{34} {-}\sigma_{14} \sigma_{24}{+}\sigma_{14} \sigma_{33}
\rangle.
\end{matrix}
$$
Thus, for the space $\mathcal{L}$ of $4 \times 4$-Hankel matrices, we have
$\mathcal{L}^{-1} = {\rm Gr}(2,5)$. The positive exponential variety
$\mathcal{L}^{-1}_{\succ 0} = {\rm Gr}(2,5)_{\succ 0}$ is our positive Grassmannian.
It consists of
 all positive definite symmetric $4 \times 4$-matrices
$\Sigma = (\sigma_{ij})$ that satisfy these five quadrics.
The bijection
$$ \Sigma \,\,\,\,\mapsto \,\,\, \sum_{i=1}^4 \sum_{j=1}^4 \sigma_{ij} x^{i+j-2} $$
 takes the set ${\rm Gr}(2,5)_{\succ 0}$ to the
$7$-dimensional convex cone of all nonnegative sextics.
\hfill $\diamondsuit$
\end{example}

Maximum likelihood estimation  means inverting the map
from the positive Grassmannian ${\rm Gr}(2,5)_{\succ 0}$
onto $K_\mathcal{L}$.
This inverse takes a nonnegative polynomial of degree $2m-2$
to  the analytic center of its {\em Gram spectrahedron} \cite{Tsu}.
We prove that this algebraic function from
$K_{\mathcal{L}}$ to $\mathcal{L}^{-1}_{\succeq 0}$ has the expected degree,
meaning that the left inequality in (\ref{eq:twoinequalities2}) holds with equality.

\begin{proposition}
\label{prop:GrassBezout2}
The ML degree of the Grassmannian ${\rm Gr}(2,m+1)$, in its guise
as exponential variety $\mathcal{L}^{-1}$ of $m {\times} m$  inverse Hankel matrices,
is the Catalan number $\frac{1}{m}\binom{2m-2}{m-1}$.
\end{proposition}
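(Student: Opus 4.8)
The plan is to deduce this from Theorem~\ref{eq:twoinequalities1} together with Proposition~\ref{prop:GrassBezout1}. By Proposition~\ref{prop:GrassBezout1} the exponential variety $\mathcal{L}^{-1}$ is (the affine cone over) the Grassmannian ${\rm Gr}(2,m+1)$ in its Pl\"ucker embedding, so ${\rm degree}(\mathcal{L}^{-1}) = \frac{1}{m}\binom{2m-2}{m-1}$. By Theorem~\ref{eq:twoinequalities1} we always have ${\rm MLdegree}(\mathcal{L}^{-1}) \le {\rm degree}(\mathcal{L}^{-1})$, with equality precisely when $\mathcal{L}^{-1} \cap \mathcal{L}^\perp = \emptyset$ in $\C\PP^{d-1}$. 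Thus the entire statement reduces to proving that the exponential variety of inverse Hankel matrices is disjoint from the center of projection $\mathcal{L}^\perp$.

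To this end I would first make $\mathcal{L}^\perp$ explicit. Since $\pi_\mathcal{L}$ is the projection with center $\mathcal{L}^\perp$, a symmetric matrix $\Sigma$ lies in $\mathcal{L}^\perp$ exactly when $\pi_\mathcal{L}(\Sigma) = 0$, that is, when the univariate polynomial $v(x)^T \Sigma\, v(x)$ of~(\ref{eq:makepoly}) vanishes identically, where $v(x) = (1,x,\dots,x^{m-1})^T$; equivalently $\sum_{i+j=k}\sigma_{ij} = 0$ for all $k$, which is visibly the condition $\langle \Sigma,\theta\rangle = 0$ for every Hankel matrix $\theta$. So it suffices to show that $v(x)^T \Sigma\, v(x)$ is a nonzero polynomial for every nonzero $\Sigma$ on the complex variety $\mathcal{L}^{-1} = {\rm Gr}(2,m+1)$. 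For this I would invoke the B\'ezoutian description underlying Proposition~\ref{prop:GrassBezout1}: identifying $\C^{m+1}$ with the space $\C[x]_{\le m}$ of polynomials of degree at most $m$, every point of ${\rm Gr}(2,m+1)$ is the span of two linearly independent polynomials $f,g$, and --- after the linear change of coordinates of Proposition~\ref{prop:GrassBezout1}, equation~(\ref{eq:b_to_p}) --- the associated matrix $\Sigma$ is the B\'ezout matrix ${\rm Bez}(f,g) = (b_{ij})_{0\le i,j\le m-1}$ with $\sum_{i,j} b_{ij} x^i y^j = \frac{f(x)g(y)-f(y)g(x)}{x-y}$. Because the $b_{ij}$ are linear in the $2\times 2$ minors of $(f,g)$, this identification is a globally defined linear map on Pl\"ucker coordinates, hence valid on all of ${\rm Gr}(2,m+1)$ and not just on the dense subset of genuine inverse Hankel matrices. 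Now $v(x)^T {\rm Bez}(f,g)\, v(x) = \sum_{i,j} b_{ij} x^{i+j}$ is the diagonal specialization $y=x$ of this generating function, and L'H\^opital's rule evaluates it as $-\bigl(f(x)g'(x) - f'(x)g(x)\bigr)$, i.e.\ minus the Wronskian $W(f,g)$. Since $f$ and $g$ are linearly independent and the field has characteristic zero, $W(f,g)$ is a nonzero polynomial, so $\pi_\mathcal{L}(\Sigma) \ne 0$. Hence $\mathcal{L}^{-1} \cap \mathcal{L}^\perp = \emptyset$, the left inequality in~(\ref{eq:twoinequalities2}) is an equality, and ${\rm MLdegree}(\mathcal{L}^{-1}) = {\rm degree}(\mathcal{L}^{-1}) = \frac{1}{m}\binom{2m-2}{m-1}$.

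The step that I expect to demand the most care is matching conventions: one must check that the B\'ezoutian normalization used in Proposition~\ref{prop:GrassBezout1} (equations~(\ref{eq:pluckerrel})--(\ref{eq:b_to_p})) really makes the composite of ${\rm Gr}(2,m+1)\to\mathbb{S}^m$ with $\pi_\mathcal{L}$ equal to $\pm W(\cdot,\cdot)$ on \emph{every} point of the Grassmannian, taking due account of the linear change of coordinates and of the doubling in the trace inner product. Everything else is routine: once this is in place, the non-vanishing is immediate from the classical fact that the Wronskian of two polynomials over a field of characteristic zero vanishes identically if and only if the polynomials are linearly dependent, which never happens at a point of the Grassmannian.
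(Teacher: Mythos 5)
Your proof is correct, and it reaches the key point --- that ${\rm Gr}(2,m+1)\cap\mathcal{L}^\perp=\emptyset$, so that equality holds on the left of (\ref{eq:twoinequalities2}) --- by a genuinely different route than the paper. Both arguments begin identically: Proposition~\ref{prop:GrassBezout1} gives the degree as the Catalan number, Theorem~\ref{eq:twoinequalities1} reduces everything to the disjointness of $\mathcal{L}^{-1}$ from the center of projection, and $\mathcal{L}^\perp$ is identified with the kernel of $\pi_\mathcal{L}$, i.e.\ with the symmetric matrices whose antidiagonal sums all vanish. The paper then argues combinatorially: each antidiagonal form $\ell_i$ is a \emph{positive} integer combination of all Pl\"ucker coordinates $p_{jk}$ of fixed degree $j+k$, hence (by the straightening law of \cite{CCA}) the $\ell_i$ form a linear system of parameters modulo the Stanley--Reisner ideal of the underlying poset, hence modulo the Pl\"ucker ideal (\ref{eq:pluckerrel}), which is exactly the disjointness statement. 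You instead observe that under the B\'ezoutian identification (\ref{eq:b_to_p}) the composite $\pi_\mathcal{L}$ of (\ref{eq:makepoly}) sends the point of ${\rm Gr}(2,m+1)$ spanned by $f,g$ to the diagonal restriction of $\bigl(f(x)g(y)-f(y)g(x)\bigr)/(x-y)$, i.e.\ to $\pm\bigl(f(x)g'(x)-f'(x)g(x)\bigr)$, and the Wronskian of two linearly independent polynomials in characteristic zero is nonzero. Your two supporting observations are exactly the ones that need to be made and both check out: the B\'ezout entries are linear in the $2\times 2$ minors, so the identification is valid on every point of the Grassmannian (not just the dense locus of honest inverse Hankel matrices), and the paper's normalization in (\ref{eq:b_to_p}) does agree, up to sign and an index shift, with the standard B\'ezoutian, so the diagonal really is $\pm W(f,g)$ --- and in any case only nonvanishing is needed. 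The trade-off: the paper's degeneration argument is a general-purpose technique for algebras with straightening law and would adapt to other graded positive combinations of Pl\"ucker coordinates, while your Wronskian argument is more elementary and more geometric, explaining \emph{which} polynomial the projection produces and connecting the statement to the classical Wronski map on ${\rm Gr}(2,m+1)$.
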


\begin{proof}
Our claim states that equality holds for the left inequality in
 Theorem \ref{eq:twoinequalities1} when $f$ is
the symmetric determinant and $\mathcal{L}$ is the space of Hankel matrices.
In order to prove this, we must show that $\mathcal{L}^{\nabla f} = {\rm Gr}(2,m+1)$
is disjoint from $\mathcal{L}^\perp$.
Note that $\mathcal{L}^\perp = \{\ell_1 = \ell_2 = \cdots = \ell_{2m-1} = 0\}$,
where $\ell_i$ is the sum of all unknowns on the $i$-th antidiagonal of the matrix $\sigma$.
Thus $\ell_i = 2 \sum_{j=1}^{i/2}  \sigma_{j,i-j+1}$ when $i$ is even, and
$\ell_i =  \sigma_{\frac{i+1}{2},\frac{i+1}{2}} + 2 \sum_{j=1}^{\lfloor{i/2}\rfloor}  \sigma_{j,i-j+1}$
when $i$ is odd.

We now pass to Pl\"ucker coordinates $p_{ij}$ for the Grassmannian ${\rm Gr}(2,m+1)$.
The  Pl\"ucker relations (\ref{eq:pluckerrel}) are homogeneous with respect to the
$\N$-grading given by ${\rm degree}(p_{jk}) = j+k$. By (\ref{eq:b_to_p}), the linear form $\ell_i$ is a positive integer linear
combination of all Pl\"ucker coordinates of degree $i$. Equivalently,
in the graded poset that underlies the straightening law for ${\rm Gr}(2,m+1)$ as in~\cite[Section 14.3]{CCA},
the linear form $\ell_i$ is a positive sum of all poset elements of height~$i$.
This implies that $\{\ell_1,\ldots,\ell_{2m-1}\}$ is a linear system of parameters
modulo the Stanley-Reisner ideal of that poset. That ideal is generated by the underlined
leading monomials in (\ref{eq:pluckerrel}).
From this it follows that $\{\ell_1,\ldots,\ell_{2m-1}\}$ is a linear system of parameters
modulo the ideal of P\"ucker relations. This is equivalent to
$\,{\rm Gr}(2,m+1) \cap \mathcal{L}^\perp = \emptyset\,$ in
$\,\C \PP^{{\binom{m+1}{2}}-1}$, as desired.
\end{proof}

It would be very interesting to extend this analysis to Hankel
matrices of polynomials in more than one variable.
While we do not yet have any results to report on this problem,
let us still close our paper by describing
the corresponding linear Gaussian concentration model.
In our view, this may become an important
object for convex algebraic geometry~\cite{BPT,stuhl}.

We fix two arbitrary positive integers $r$ and $s$,
and we set $m = \binom{r+s-1}{s-1}$ and
$c = \binom{2r+s-1}{s-1}$. These are the numbers of
monomials in $s$ variables of degree $r$ and $2r$ respectively.
Each matrix $\theta$ in  $\S^m$ has its rows and columns labeled
with the monomials of degree $r$.
A matrix $\theta \in \S^m$ is a {\em Hankel matrix}
if its entries are indexed by the product of the
row label and the column label.
The set $\mathcal{L}$ of all Hankel matrices is
a subspace of dimension $c$ in $\S^m$

For instance, for $r=2, s = 3, m = 6, c = 15$,
with row and column labels written additively as
$(200),(020),(002),(110),(101),(011)$, the space of
Hankel matrices equals
\begin{equation}
\label{eq:generalizedhankel}
\mathcal{L} \quad = \quad  \left\{\,
\begin{pmatrix}
\theta_{400} & \theta_{220} & \theta_{202} & \theta_{310} & \theta_{301} & \theta_{211}  \\
 \theta_{220} & \theta_{040} & \theta_{022} & \theta_{130} & \theta_{121} & \theta_{031} \\
 \theta_{202} & \theta_{022} & \theta_{004} & \theta_{112} & \theta_{103} & \theta_{013} \\
 \theta_{310} & \theta_{130} & \theta_{112} & \theta_{220} & \theta_{211} & \theta_{121} \\
 \theta_{301} & \theta_{121} & \theta_{103} & \theta_{211} & \theta_{202} & \theta_{112} \\
 \theta_{211} & \theta_{031} & \theta_{013} & \theta_{121} & \theta_{112} & \theta_{022}
 \end{pmatrix} \,\,:\,\,
 \theta \in \R^{15}
 \right\}.
\end{equation}
Thus $C_\mathcal{L}$ is the spectrahedral cone consisting of all
positive definite Hankel matrices.

The dual space $\R^d/\mathcal{L}^\perp \simeq \R^c$
is identified with the space of homogeneous polynomials
of degree $2r$ in $s$ variables. The dual cone
$K_\mathcal{L} =  (C_\mathcal{L})^\vee$ consists of
those polynomials that are sums of squares
of polynomials of degree $r$,
by \cite[Corollary 4.36]{BPT}.

 Hilbert's classical theorem (see \cite[Theorem 4.3]{BPT}) states that
 the sum-of-squares cone $K_\mathcal{L}$  coincides with the cone of
polynomials that are nonnegative on $\R^s$ if
and only if $r=1$, $s=2$, or ($r=2$ and $s = 3$).
In all other cases, the latter cone strictly contains $K_\mathcal{L}$.
The exponential variety $\mathcal{L}^{-1}$ consists of all
inverse Hankel matrices, and its positive part
$\mathcal{L}^{-1}_{\succ 0}$ maps bijectively onto
the cone $K_\mathcal{L}$. The inverse map takes
each sum-of-squares polynomial  $p$ to a distinguished
sum-of-squares representation $\hat p$. Namely,
$\hat p$ is the analytic center in the Gram spectrahedron of $p$,
which is the fiber over $p$ of the map
$\pi_{\mathcal{L}}$ from $\overline{{\rm PD}}_m$ onto $K_\mathcal{L}$.
The following problem asks for a higher-dimensional generalization
 of Propositions \ref{prop:GrassBezout1} and \ref{prop:GrassBezout2}.

\begin{problem}
Determine  the degree and the ML degree of  the exponential variety $\mathcal{L}^{-1}$
of inverse Hankel matrices for $s \geq 3$ variables, and study its defining polynomial equations.
\end{problem}

For example, for $r=2,s=3$, we start with the linear subspace
$\mathcal{L} \subset \R \PP^{20}$ given  in (\ref{eq:generalizedhankel}).
 Its exponential variety $\mathcal{L}^{-1}$ consists of the
 inverses of all such Hankel matrices. This is a projective variety
 of dimension $14$ in $\C \PP^{20}$,
 whose positive part maps bijectively onto the
 cone $K_\mathcal{L}$ of all nonnegative ternary quartics.
That cone was described in \cite[Theorem 6.2]{orbitopes}.

\bigskip

\noindent
{\bf Acknowledgments.}
All four authors were hosted by the Simons Institute for the Theory of Computing in Berkeley during
the Fall 2014 program {\em Algorithms and Complexity in Algebraic Geometry}.
We thank Petter Br\"{a}nd\'{e}n, June Huh, Mario Kummer,
Gregorio Malajovich and Donald Richards for helpful discussions.

\begin{small}

\bigskip
\end{small}

\footnotesize
\noindent {\bf Author's addresses:}

\smallskip

\noindent Mateusz Micha{\l}ek, {\tt wajcha@berkeley.edu} \\
Mathematical Institute, Polish Academy of Sciences, Warsaw, 00-656, Poland\\
\noindent Department of Mathematics,
 University of California, Berkeley, CA 94720-3840, USA \\

\smallskip

\noindent Bernd Sturmfels, {\tt bernd@berkeley.edu} \\ Department of Mathematics,
University of California, Berkeley, CA 94720-3840, USA \\

\smallskip

\noindent Caroline Uhler, {\tt caroline.uhler@ist.ac.at} \\
  Institute of Science and Technology Austria,
            3400 Klosterneuburg,
            Austria \\
Massachusetts Institute of Technology, EECS,
Cambridge, MA 02139-4307, USA \\

\smallskip

\noindent Piotr Zwiernik, {\tt piotr.zwiernik@gmail.com} \\
Dipartimento di Matematica, Universit\`a di Genova, 16146 Genova, Italy \\
Department of Economics and Business, Universitat Pompeu Fabra, 08005
Barcelona, Spain \\

\end{document}